\documentclass[onecollarge,dvipsnames]{article}                    
%
%
\usepackage{soul}
\usepackage[english]{babel}
\usepackage{graphicx}
\usepackage{comment}
\usepackage{amsmath,amssymb,amsbsy,latexsym}
\usepackage{color}
\usepackage{caption}
\usepackage{blindtext}
\usepackage{a4wide}
\usepackage{amsfonts, mathrsfs}
\usepackage{tikz}
\usepackage{verbatim}
\usepackage[normalem]{ulem}
\usepackage{url}
\usepackage{cancel}

\usepackage{subcaption}

\usepackage{tablefootnote}

\usepackage{enumitem}

\usepackage[ruled]{algorithm2e} 

\usepackage{cancel}
\usepackage{multirow}

%

\oddsidemargin  0pt \topmargin   0pt \headheight 0pt \headsep 0pt
\textwidth   6.5in \textheight 8.5in \marginparsep 0pt
\marginparwidth 0pt
\parskip 1ex  \parindent 0ex

\newtheorem{theorem}{Theorem}

\newtheorem{lemma}[theorem]{Lemma}
\newtheorem{example}[theorem]{Example}

\newtheorem{definition}[theorem]{Definition}

\newtheorem{remark}[theorem]{Remark}
\newenvironment{proof}[1][Proof]{\textbf{#1.} }{\ \rule{0.5em}{0.5em}}

\newcommand{\mat}{\mathbf}
\newcommand{\onevec}{\mat 1}

\newcommand{\E}{\mathbb E}

\renewcommand{\P}{\mathbb P}
\newcommand{\sgn}{{\rm sgn}}

\newcommand{\bG}{{\bf G}}

\newcommand{\bX}{{\bf X}}
\newcommand{\bT}{{\bf T}}

\newcommand{\hbX}{{\hat{\bf X}}}
\newcommand{\hX}{{\hat X}}

\newcommand{\argmin}{\mathop{\mathrm{argmin}}\limits}

%
%


\begin{document}

\title{Red Light Green Light Method for Solving Large Markov Chains}

\author{Konstantin Avrachenkov\thanks{Inria Sophia Antipolis, France},
Patrick Brown\thanks{Inria Sophia Antipolis, France} and
Nelly Litvak\thanks{University of Twente and Eindhoven University of Technology, The Netherlands}}

%
%


\maketitle

\begin{abstract}
Discrete-time discrete-state finite Markov chains are versatile mathematical models for a wide range of real-life stochastic processes. One of most common tasks in studies of Markov chains is computation of the stationary distribution. Without loss of generality, and drawing our motivation from applications to large networks, we interpret this problem as one of computing the stationary distribution of a random walk on a graph. We propose a new controlled, easily distributed algorithm for this task, briefly summarized as follows: at the beginning, each node receives a fixed amount of cash (positive or negative), and at each iteration, some nodes receive `green light' to distribute their wealth or debt proportionally to the transition probabilities of the Markov chain; the stationary probability of a node is computed as a ratio of the cash distributed by this node to the total cash distributed by all nodes together. Our method includes as special cases a wide range of known, very different, and previously disconnected methods including power iterations, Gauss-Southwell, and online distributed algorithms. We prove exponential convergence of our method, demonstrate its high efficiency, and derive scheduling strategies for the green-light, that achieve convergence rate faster than state-of-the-art algorithms.
\end{abstract}

\section{Introduction}

Markov chains are versatile models that provide a mathematical representation for a wide variety of real-life stochastic processes.
One of most common and important tasks in studies of Markov chains is the computation of a stationary distribution. Traditional applications include performance evaluation of queueing and computing systems, and analysis of chemical and biological systems. From the end of the 1990s, computation of Google PageRank~\cite{Brin1998anatomy} attracted vast attention in the literature and motivated the search for new numerical methods. The topic continued attracting much attention  due to many recent successful applications of PageRank
\cite{Gleich2015pagerank}
 well beyond its original use in web search.

A common challenge for modern applications of Markov chains is the extremely large number of states. For example, PageRank is a stationary distribution of a random walk on a large network, where the number of states (network nodes)  can be in the order of billions. These large Markov chains require computation methods with low complexity, small storage requirements, and  distributed implementation. Few existing approaches simultaneously satisfy these three criteria.

In this paper we propose a new general method for computing the stationary distribution of a large Markov chain. In the network setting, our algorithm can be pictorially described as follows.
At the beginning, each node carries a certain amount of cash that can be positive or negative (wealth or debt), and the total cash in the system equals zero. Then, at each step, chosen  nodes receive a green light to distribute their cash (wealth or debt) to other nodes proportionally to the transition probabilities of the Markov chain. It can be shown that the cash will tend to zero and that the total sum of routed cash converges to the stationary distribution by a multiplicative factor. This mechanism mimics  a children's game  {\it Statues}, also called {\it RLGL (Red Light, Green Light)},  where players, every so often, receive green light to move towards the goal while others stay fit; hence, our suggested name RLGL for this method.

Due to the presence of positive and negative cash, and the choice of which nodes receive a green light, the proposed method presents a  flexibility with interesting consequences. \ The RLGL algorithm incorporates many successful existing methods and extends the range of application and improves others: the classical widely applicable power iterations, the fast Gauss-Southwell algorithms for PageRank computation \cite{mcsherry2005uniform} requiring de facto a substochastic routing matrix, and the general on-line OPIC algorithm~\cite{Abiteboul2003OPIC} which in its original form  converges at a much slower than exponential rate. Moreover, we will demonstrate that RLGL achieves a higher convergence rate than known methods with simple green light scheduling strategies.


\subsection{Related literature}

Research on computation of a stationary distribution of Markov chains has a long history. Our goal here is to sketch the line-up of the results that are directly relevant for positioning of our work.  For a more comprehensive survey of this vast research area we refer the reader to several excellent books, for example,~\cite{stewart1991numerical,bini2005numerical}.

The stationary distribution of a Markov chain is a normalized solution of a linear system, or the principal eigenvector of the transition matrix. There are two general computational approaches for finding such solution: exact and iterative.

One of most successful exact methods is the Grassmann-Taksar-Heyman (GTH) method~\cite{GTH85}. This method is a variant of Gaussian elimination with no subtractions, and thus
no loss of significant digits caused by cancellation \cite{o1993entrywise}. Even though this method has remarkable performance for small and medium-sized Markov chains, it does not scale to large Markov chains due to the high computational complexity of the Gaussian elimination.

The iterative methods come in a much greater variety. The most straightforward one is the power iterations where the probability vector is iteratively multiplied by the transition matrix till convergence. The power iterations method is used in many applications thanks to its simplicity, suitability for sparse computations, and natural distributed implementations \cite{lubachevsky1986chaotic,BertsekasTsitsiklis1989}. Power iterations converge exponentially fast, however, the rate is limited by the modulus  of the second largest eigenvalue of the transition matrix. This results in a slow convergence when this number is close to unity. Since the stationary distribution of a Markov chain is merely a solution of a linear system, one can apply general purpose Krylov subspace family of methods, in particular, the Generalized Minimal Residual (GMRES) method, that is currently the state-of-the-art for solving linear systems \cite{Saad1986GMRES,Stewart1992GMRES_Markov}. We will numerically compare our RLGL method to both PI and GMRES.

Many specific iterative methods have been designed for solving Markov chains with particular structure. For instance, if a Markov chain consists of several weakly connected subchains, or clusters of states, then the aggregation-disaggregation approach is highly efficient, see e.g. the book~\cite{stewart1991numerical} and the many references therein. Another set of methods has been designed for queueing applications, where the transition matrix has distinct repeating patterns in its structure. Examples and many references can be found in the books~\cite{latouche1999introduction,bini2005numerical}. It is important to realize that high regularity in the structure of the Markov chain is necessary in order to benefit from these methods.

In the case of non-ergodic Markov chains with several closed classes of states and a set of transient states,
\cite{Berkhout2019jump} proposed a method for computing  the ergodic projection, using  the powers of transition matices.  Since  computing powers of matrices may not be feasible for very large sparse Markov chains, we expect that our RLGL method can serve as a subroutine in the task of computing ergodic projection.

In 1998, the introduction of PageRank by the Google founders~\cite{Brin1998anatomy} gave an enormous impetus to the development of new computational methods for solving very large Markov chains. PageRank is a Markov chain on the web graph, with restart. It is noteworthy that many of the developed methods rely on the restart property that yields a guaranteed convergence rate for iterative methods. They are thus not applicable to general Markov chains. Below we will focus on methods directly related to our algorithm. For a broader overview on the PageRank computations we refer the reader to the surveys~\cite{ishii2018distributed,park2019survey,Langville}.

Out of the many algorithms for PageRank computations, the family of Gauss-Southwell methods~\cite{mcsherry2005uniform,Andersen06,berkhin2006bookmark,Hong2012,hong2015d,nassar2015strong,suzuki2018distributed} and other versions of coordinate descent (see e.g., \cite{DaiFreris2017}), clearly stand out due to their rapid convergence, often much faster than  power iterations. Such methods have been originally proposed in 1940-s for solving linear systems by greedy elimination of the absolute value of the residual~\cite{SouthwellRelaxationEng,SouthwellRelaxationPhys}.
In Section~\ref{ssec:comparison} we will demonstrate that  the Gauss-Southwell methods for PageRank computation are a particular case of our RLGL algorithm.
In fact, our work enables the application of Gauss-Southwell type methods for solving general Markov chains, and provides the  proof of its convergence.


A recent application of PageRank is the Personalized PageRank (PPR) with restart occurring from the same node. In this case the random walk typically stays close to the restart node, which makes this model useful for local graph clustering~\cite{Andersen06,Spielman2013local}, similarity measures \cite{Avrachenkov2019similarities} and semi-supervised learning~\cite{Avrachenkov2012semisupervised}.
Several algorithms~\cite{Andersen06,Spielman2013local,nassar2015strong,vial2019structural} take advantage of localization of PPR to compute its sparse approximation. Moreover, the recently developed FAST-PPR method~\cite{Lofgren2014fastPPR} achieves even faster convergence by combining the Gauss-Southwell-type method from \cite{Andersen2007local} with random walks.

When solving large Markov chains, it is desirable to have algorithms that allow for distributed implementation. Many distributed algorithms exist for solving large linear systems~\cite{BertsekasTsitsiklis1989,Baudet1978Asynchronous,avron2015revisiting,liu2017asynchronous}. Although Markov chains, and PageRank in particular, are special cases of large linear systems, it pays off to leverage on their very specific properties. An overview of distributed methods specifically designed for PageRank is given recently in~\cite{ishii2018distributed}. In addition to these methods, there are also online algorithms for PageRank computations which allow for distributed implementation. This includes Monte-Carlo algorithms~\cite{Fogaras2005towards,Avrachenkov2007MC}, based on simulation of random walks, and the OPIC algorithm~\cite{Abiteboul2003OPIC,Litvak2008Performance,Litvak2012AAP}, based on redistribution of positive cash. Our proposed RLGL algorithm is inspired by OPIC. We will explain this algorithm and compare it to our RLGL method in Section~\ref{ssec:comparison}. There we also discuss in detail the connection between RLGL and the closely related Gauss-Southwell methods.

\subsection{Contributions and the structure of the paper}

In the next Section~\ref{sec:algorithm} we present our RLGL algorithm. Section~\ref{ssec:def_RLGL} provides the formal description of the algorithm. Then, Section~\ref{ssec:math_RLGL} explains the mathematical rationale behind this method  and its novelty. Section~\ref{ssec:comparison} compares the RLGL method to the power iterations, and the two most relevant approaches: Gauss-Southwell and OPIC. We show that RLGL generalizes the above mentioned three methods.

In Section~\ref{sec:exp_convergence} we establish general sufficient conditions for the exponential convergence of the RLGL algorithm.
Our results generalize the Gauss-Southwell method beyond PageRank to general Markov chains, while establishing its exponential convergence. To the best of our knowledge, in the literature, the proofs of  exponential convergence of the Gauss-Southwell-type methods always use the restarting property of PageRank. So Gauss-Southwell methods solve for:
$$\pi^*=\pi^* P + b,$$
where $P$ is a substochastic matrix and $b$ is a vector. Since our aim is to deal with general Markov chains, our proofs are built on completely different arguments. We show that, as a consequence of positive and negative cash in RLGL, the error term can be expressed as the total variation distance between two Markov chains. Depending on the scheduling rule of green light, exponential convergence can be proved directly or using a coupling argument. When the schedule is randomized, the proof of exponential convergence also relies on large deviation bounds.


In Section~\ref{sec:two_blocks} we analyze a very simple mean-field stochastic block model (SBM) with two blocks to demonstrate that the RLGL algorithm can converge significantly faster than power iterations. This simple example already provides valuable insights about effective scheduling of the green light. Specifically, in the case of two blocks, an intuitive rule to give the green light  to the nodes with higher absolute value of cash appears to yield fast convergence.

However, the optimal scheduling of green light in general Markov chains leads to a dynamic program with exponential number of actions in the system size. Surprisingly, even when going from two to three clusters in the mean-field SBM, the optimal scheduling of green light exhibits a very subtle dependency on the cash distribution between the blocks. In Section~\ref{sec:three_blocks} we  set up and solve the dynamic program for this model numerically using the value iteration method.
These examples demonstrate the speed of convergence may be greatly improved by optimizing the green light scheduling, opening an avenue for further research.

Section~\ref{sec:num} contains numerical results.  We demonstrate the fast convergence of the RLGL method and experimentally compare different scheduling policies for the green light. Then,
we compare RLGL with the other state of the art methods and demonstrate that RLGL has superior performance.
In Section~\ref{sec:num} we also indicate distributable versions of the RLGL method.

Section~\ref{sec:conc} concludes the paper with a number of future research directions.

\section{The RLGL algorithm}
\label{sec:algorithm}

\subsection{Formal description of the RLGL algorithm}
\label{ssec:def_RLGL}

Consider an ergodic Markov chain with a finite state space $[N]=\{1,2,\ldots,N\}$ and transition probability matrix $P=(p_{ij})$, where $p_{ij}$ is the probability that the next state is $j$ given that the current state is $i$.
Let  $\pi^* = (\pi^*_1,\pi^*_2,\ldots,\pi^*_N)$ be the stationary distribution of this Markov chain, so $\pi^*$ is the solution of:
$$\pi^*=\pi^* P,$$
with $\sum_{i=1}^N\pi^*_i =1$.  The {\it goal} of the RLGL algorithm is to quickly compute $\pi^*$. {One desired aspect} is to do so in a distributed way, that is, computations are executed locally by a node, or a group of nodes.

We number the steps of the algorithm by $t=0,1,\ldots$.   For each $t\ge 0$ and $i\in [N]$, let $C_{t,i}\in {\mathbb{R}}$ be the amount of cash at node $i$ at the {\it beginning} of step $t$.
Denote $C_t = (C_{t,1}, C_{t,2},\ldots,C_{t,N})$.

 At step $t\ge 0$, a set of nodes $G_t\subseteq [N]$ receive a `green light', and  move cash to their neighbors proportionally to the transition probabilities. Let $M_{t,i}$ be the amount of cash moved by node $i$ at step $t$, and denote $M_t = (M_{t,1}, M_{t,2},\ldots, M_{t,N})$. Then the cash of each node is updated as follows:
\begin{align}
\label{eq:grl_update_C}
C_{t+1,i}&=C_{t,i}-M_{t,i}+ \sum_{j=1}^Np_{ji}M_{t,j}, \quad t\ge 0,\; i\in[N].
\end{align}
Step~0 is special. We set $G_0=[N]$, and set $M_0$ to be equal to some probability distribution on $[N]$, which usually will be the uniform distribution: $M_0=(1/N)\, {\bf 1}^T$, where ${\bf 1}$ is the column-vector of ones. We set $C_0={\bf 0}^T$, where ${\bf 0}$ is a column-vector of zeros.
 After moving the amount $1/N$, each  node's cash becomes negative, $-1/N$, and this is (possibly partially) compensated by the positive cash received from other nodes, according to (\ref{eq:grl_update_C}). As a consequence, some nodes will have positive cash and others negative cash. Since $P$ is a stochastic matrix the total cash is equal to zero. At subsequent steps $t\ge 1$, nodes in $G_t$ move all their cash, positive or negative, while the nodes outside $G_t$ do not move cash. Formally, $M_{t,i}=C_{t,i}$ if $i\in G_t$, and $M_{t,i} = 0$ otherwise. Note that, since there is no in- or out-flow of cash, the total amount of cash in the system remains zero at all times.

The {\it history} $H_{t,i}$ of node $i$ at the beginning of step $t$ is defined as the total amount of cash moved by $i$ before step~$t$:
\[H_{0,i}=0, \quad H_{t,i} = \sum_{k=0}^{t-1}M_{k,i},\quad t\ge 0, \quad i\in [N],\]
and $H_t=(H_{t,1}, H_{t,2},\ldots,H_{t,N})$. Note that from (\ref{eq:grl_update_C}) we have:
$$H_t = -C_t+ H_tP.$$
At step $t\ge 1$, the algorithm provides the estimation $\hat \pi_{t,i}$ of $\pi^*_i$ as a ratio between $H_{t,i}$ with and the total history:
\begin{equation}
\label{eq:pi_hat}
\hat\pi_{t,i} = \frac{H_{t,i}}{H_{t} {\bf 1}},  \quad i\in [N], \; t\ge 1,
\end{equation}
and we denote $\hat \pi_t = (\hat \pi_{t,1}, \hat \pi_{t,2},\ldots,\hat \pi_{t,N})$.

Note that $\hat \pi_{t,i}$ can be negative or greater than one, so, in general, $\hat \pi_{t,i}$ is {\it not} a probability measure, even though $\hat \pi_{t}{\bf 1} =1$ provided that $H_{t} {\bf 1}\ne 0$. Furthermore, $H_{t} {\bf 1}$ can be zero, in which case we restart the algorithm and/or change the choices of $G_t$. In our numerical experiments, on realistic examples, this did not happen, but we present a counterexample where it may occur depending on the green light scheduling. We will discuss this in more detail in Section~\ref{sec:exp_convergence}.

The algorithm stops when convergence occurs:  $||\hat \pi_{t}-\hat \pi_{t}P||_1<\varepsilon$ for some desired $\varepsilon>0$.  In the next Section~\ref{ssec:math_RLGL} we will explain why convergence occurs and how this leads to the  idea behind the RLGL algorithm.

We close this section by a formal summary of the RLGL algorithm. Let $I$ denote the identity matrix, and let $I(G_t)$ be the identity matrix on $G_t$ and zero elsewhere, so $I(G_t)_{ii}=1$ when $i\in G_t$, and $I(G_t)_{ij}=0$ otherwise. Then the RLGL algorithm computes $M_t$, $C_t$ and $H_t$ recursively as follows:
\begin{align}
\label{eq:init}
&C_0={\bf 0}, H_0={\bf 0}, M_0=(1/N) {\bf 1},\\
\label{eq:recursion_M}
&M_{t}=C_{t}I(G_t), \quad t\ge 1,\\
\label{eq:recursion_H}
&H_{t+1}=H_{t} + M_{t}, \quad t\ge 0,\\
\label{eq:recursion_C}
&C_{t+1}=C_{t}-M_{t} + M_tP = C_t - M_t(I-P), \quad t\ge 0.
\end{align}
Algorithm~\ref{alg:RLGL} provides a pseudocode of the RLGL algorithm.

\begin{algorithm}[htb]
\caption{\textsc{Red Light Green Light Algorithm (RLGL)}}
\label{alg:RLGL}
{Input:} $N$, $P$, $\varepsilon$, rule for choosing $G_t$

{Output:} $\hat \pi_{t}$

\begin{enumerate}[noitemsep,nolistsep]
\item[(1)] $t=0$, $C_0={\bf 0}$, $H_0={\bf 0}$, $M_0=\frac{1}{N} \, {\bf 1}$;
\item[(2)] $H_{t+1}=H_{t} + M_{t}$;
\item[(3)]  $C_{t+1}=C_{t}-M_{t}(I-P) $;
\item[(4)]  $t \leftarrow t+1$;
\item[(5)] $\hat \pi_{t} = \frac{1}{H_t {\bf 1}}\, H_t$;
\item[(6)] $M_{t}=C_{t}I(G_t)$;
\item[(7)] Iterate (2) -- (6) until $||\hat \pi_{t}-\hat \pi_{t}P||_1 =||\frac{1}{H_t {\bf 1}}\, C_t||_1 < \varepsilon$ for some $t\ge 2$.
\end{enumerate}
\end{algorithm}

\subsection{Rationale behind the RLGL algorithm}
\label{ssec:math_RLGL}

The idea behind the RLGL algorithm emerges from the mathematical description of its dynamics, in the same lines as in~\cite{Abiteboul2003OPIC,Litvak2008Performance} for the OPIC algorithm. To begin with, without imposing any assumptions on $C_0$, one can write the equation for the cash balance in the system:
\begin{equation}
\label{eq:recursion_H_res}
H_{t}+C_{t} = M_{0} + H_{t}P + (C_{0} - M_{0}) =  H_{t}P + C_{0}, \quad t\ge 1,
\end{equation}
which is explained as follows. The coordinate $i$ of the vector on the left-hand side is the total amount of cash that $i$ has ever possessed until the beginning of step $t$. This consists of the three terms on the right-hand side: i) the amount moved by $i$ at step $t=0$, $M_{0,i}$; ii) all cash received by $i$ from other nodes, $\sum_{j=1}^N H_{t,j} p_{ji}$, and iii) the cash left after step~0, that is, $C_{0,i}-M_{0,i}$. Canceling $M_{0}$ on the right-hand side of \eqref{eq:recursion_H_res}, dividing both sides by $H_{t}{\bf 1}$,  and substituting $\hat \pi_{t} = \frac{H_t}{H_{t}{\bf 1}}$,  we obtain
\begin{align}
\label{eq:pi*}
&\hat \pi_{t} =\frac{1}{H_{t}{\bf 1}}(C_0-C_t) + \hat \pi_{t} P, \quad \hat \pi_{t}{\bf 1}  =1, \quad t\ge 1.
\end{align}
It is easy to check that this system of linear equations has a unique solution $\hat\pi_t$, which satisfies
\begin{equation}
\label{eq:convergence}
\hat \pi_t - \pi^* = \frac{1}{H_{t}{\bf 1}}\left(C_0 - C_t\right)\sum_{k=0}^\infty(P^k - {\bf 1} \pi^*).
\end{equation}
The idea of the RLGL algorithm is to improve convergence by making $||C_0 - C_t||_1$ converge to zero quickly. For that, the cash $C_t$ must be driven as close as possible to its initial value $C_0$. Hence, the idea is that node $i\in G_t$ moves the difference $C_{t,i}-C_{0,i}$ of cash, positive or negative. As negative difference is routed to positive difference, and reciprocally, $||C_0 - C_t||_1$ decreases. This differentiates RLGL from Gauss-Southwell and OPIC methods which only move positive cash $C_t$. Thus Gauss-Southwell must rely on substochasticity for convergence while OPIC obtains the convergence of the error term in (\ref{eq:pi*}) by having $H_t{\bf 1}$ tend to infinity, resulting in slow convergence. In the following we adopt the equivalent formulation where $C_0={\bf 0}$ and $M_t$ is an arbitrary probability distribution.

We emphasize that it is crucial that the cash is moved proportionally to transition probabilities at all steps $t\ge 0$. Otherwise, the term $H_tP$ on the right-hand side of \eqref{eq:recursion_H_res} will not be valid, and the algorithm will converge to a wrong limit.

To get an idea about convergence of $||C_t||_1$, denote by
\[P(G_t) = I-I(G_t)(I-P),\quad t\ge 0,\]
the matrix that describes the cash movement: its rows $i\in G_t$ are the same as in $P$ and rows $i\notin G_t$ are as in the identity matrix. Next,  use \eqref{eq:init} and \eqref{eq:recursion_M}, and iterate \eqref{eq:recursion_C} to obtain
\begin{align}
\nonumber
C_1&=-M_0(I-P),\\
\label{eq:cn}
C_{t} & = C_{t-1}[I-I(G_{t-1})(I-P)]=C_{t-1}P(G_{t-1})=-M_0(I-P)\prod_{k=1}^{t-1}P(G_k),\quad t\ge 2.
\end{align}
We see that $C_{t}$ is expressed through a product of stochastic matrices, making exponential convergence plausible. Indeed, in Section~\ref{sec:exp_convergence} we will prove the exponential convergence of $||C_t||_1$ under very general conditions on $P$ and $\bG=(G_0,G_1,\ldots)$.

\begin{remark}
We note that~\cite{bajovic2012products} obtains the exponential convergence for the product of symmetric stochastic matrices, however, their argument does not extend to general Markov chains. The
general results on inhomogeneous Markov chains from \cite{seneta2006book} also do not apply,
at least directly, as they require conditions on regularity of $P(G_k)$, which we do not have
unless $G_t=[N]$. To the best of our knowledge, exponential convergence of the product of stochastic matrices as in (\ref{eq:cn}) has not been proven before.
\end{remark}

Importantly, in the RLGL setting, one must propose a sequence of sets $\bG$ that achieve efficient reduction of $||C_0 - C_t||_1$. Moreover, $G_t$ can be chosen in an unfortunate way so that no convergence occurs, for example, when $G_t=\{1\}$ for all $t=1,2,\ldots$, so that only node 1 moves its cash. We will see a more intricate example in Section~\ref{sec:three_blocks}.

\subsection{Comparison to other methods}
\label{ssec:comparison}

\subsubsection{Power Iterations}

In a special case $G_t=[N]$, we have $P(G_t)=P$ for all $t\ge 0$. Then by iterating \eqref{eq:recursion_H}, from \eqref{eq:recursion_M} and \eqref{eq:cn} we obtain
\begin{align*}
H_{t} &= \sum_{k=0}^{t-1} M_k = M_0+\sum_{k=1}^{t-1}C_k = M_0 - M_0 (I-P) \sum_{k=1}^{t-1} P^{k-1}= M_0P^{t-1}, \quad t\ge 1.
\end{align*}
By choosing $M_0$ to be a probability distribution over $[N]$, we have that $\hat \pi_{t} = M_0P^{t-1}$, therefore power iterations are a special case of the RLGL algorithm.

\subsubsection{Gauss-Southwell method}

We next show that RLGL is a generalization of the Gauss-Southwell algorithm for PageRank (GSo-PR). We will discuss this in detail because GSo-PR is a state-of-the-art approach for PageRank computation, and, to the best of our knowledge, its connection to probabilistic on-line methods, such as OPIC, was not established before.

PageRank $\pi^*$ is a stationary probability of a Markov chain that, at each time step, with probability $c\in (0,1)$ follows a transition matrix $P$, and with probability $(1-c)$ restarts from a random node sampled from probability distribution ${\bf s} = (s_1,s_2,\ldots,s_N)$. Then $\pi^*$ solves the following linear system:
\begin{equation}
\label{eq:pagerank}
\pi^* = c \pi^* P + (1-c) {\bf s}.
\end{equation}
Let us now explain the  GSo-PR method. We will intentionally use the same notations as for the RLGL algorithm so that the connection between the two algorithms becomes transparent. Let $H_t$ be the estimate of $\pi^*$ at time $t$. The idea of the GSo-PR method is to iteratively reduce the {\it residual}, $C_t$, defined as \begin{equation}
\label{eq:residual}
C_t = c H_t P + (1-c) {\bf s} - H_t, \quad t\ge 0.
\end{equation}
Clearly, when $C_t={\bf 0}$ we have found the exact solution of \eqref{eq:pagerank}.
In the literature~\cite{mcsherry2005uniform,nassar2015strong,Hong2012,hong2015d,suzuki2018distributed}, GSo-PR algorithm is initialized with $C_1=(1-c){\bf s}$ and $H_1 = {\bf 0}$. Then at each iteration, some probability mass is transferred from $C_t$ to $H_{t+1}$. Formally, let ${\bf e}_k$ be the column-vector with the $k$-th coordinate equal to 1 and all other coordinates equal to zero, and set $M_t=C_{t,k}{\bf e}_k^T$. Usually the greedy coordinate descent is used, so $k=\arg\max_{i\in [N]} C_{t,i}$. Then the estimate $H_t$ is updated, namely, its $k$-th coordinate increases by $C_{t,k}$:
\begin{equation}
\label{eq:GSo_H}
H_{t+1}=H_t+M_t, \quad t\ge 1,
\end{equation}
and $C_{t+1}$ is updated by substituting $H_{t+1}$ in the right-hand side of \eqref{eq:residual}:
\begin{equation}
\label{eq:GSo_C}
C_{t+1}= C_t - M_t(I-c P), \quad t\ge 1.
\end{equation}
A pseudocode of the Gauss-Southwell method is given in Algorithm~\ref{alg:GSo}.

\begin{algorithm}[htb]
\caption{\textsc{Gauss-Southwell algorithm for PageRank computation}}
\label{alg:GSo}
{Input:} $N$, $P$, $\varepsilon$, ${\bf s}$, $c$

{Output:} $\hat \pi_{t}$

\begin{enumerate}[noitemsep,nolistsep]
\item[(1)] $t=1$, $C_1= (1-c){\bf s}$, $H_1={\bf 0}$;
\item[(2)] $k = \arg\max_{i\in [N]} C_{t,i}$, $M_t = C_{t,k}{\bf e}_k$
\item[(3)] $H_{t+1}=H_{t} + C_{t,k}{\bf e}_k$;
\item[(4)] $C_{t+1} = C_t  - M_t(I-cP)$;
\item[(5)]  $t \leftarrow t+1$;
\item[(6)] $\hat \pi_t = H_t$;
\item[(7)] Iterate (2) -- (6) until  $||C_t||_1< \varepsilon$ for some $t\ge 2$.
\end{enumerate}
\end{algorithm}

Notice that the updating rules \eqref{eq:GSo_H}, \eqref{eq:GSo_C} for the GSo-PR method are identical to the updating rules \eqref{eq:recursion_H},\eqref{eq:recursion_C} of the RLGL algorithm.
Furthermore, Algorithm~\ref{alg:GSo} converges to a probability distribution~\cite{mcsherry2005uniform}, therefore estimator $H_t$ in Algorithm~\ref{alg:GSo} is equivalent to the normalized estimator $\hat \pi_t = \frac{H_t}{H_t{\bf 1}}$ in Algorithm~\ref{alg:RLGL}.  However, while the routing matrix $P$ in \eqref{eq:recursion_C} is stochastic, the routing matrix $cP$ in \eqref{eq:GSo_C} is substochastic and GSo-PR cannot be applied to arbitrary Markov chains in contrary to RLGL.
 Now, let us show that GSo-PR is in fact a special case of RLGL. For that, we introduce an auxiliary node~0, and define a transition matrix $P^{(0)}$, which extends $P$ to an $(N+1)\times (N+1)$ matrix as follows:
\[P^{(0)} = \left(\begin{array}{c|c}c& (1-c){\bf s}\\ \hline (1-c){\bf 1}& c P\end{array}\right).\]
Next, in the RLGL Algorithm~\ref{alg:RLGL} we set $M^{(0)}_0 = {\bf e}_0^T$, $C^{(0)}_0={\bf 0}^T$,
and apply the RLGL algorithm to $P^{(0)}$ (we will use the upper index $(0)$ in all corresponding notations to indicate the modified system). Then we get
\begin{align*}
C^{(0)}_1 &= M^{(0)}_0(P^{(0)}-I) = (- (1-c), (1-c){\bf s}),\\
H^{(0)}_1 &= M^{(0)}_0=(1, {\bf 0}).
\end{align*}
For the nodes $1,2,\ldots,N$, this is exactly the initialization of Algorithm~\ref{alg:GSo}.
The choice of next cash movement in Algorithm~\ref{alg:GSo} is a particular green light schedule
in Algorithm~\ref{alg:RLGL} with $G_t=\{k\}$, $k \neq 0$ and $k$ chosen according to the
optimization procedure specified in line (2) of Algorithm~\ref{alg:GSo}.
Furthermore, the update in line (4) of Algorithm~\ref{alg:GSo} is identical to the update in line (3)
of Algorithm~\ref{alg:RLGL} since the $N\times N$ right lower corner block of matrix $P^{(0)}$ is equal to $cP$.
Thus, by choosing this special setting for the RLGL method, we make it equivalent to the GSo-PR.

The crucial difference between GSo-PR and RLGL is that RLGL allows positive and negative cash at all nodes. In contrast, the GSo-PR algorithm has the entire negative cash in the auxiliary node~0, while the authentic nodes~$1,2,\ldots,N$ can have only positive cash, and only positive cash can be moved. Therefore, the RLGL algorithm offers much greater flexibility and substantial generalization compared to the GSo-PR method. As we shall see in Section~\ref{sec:num}, when applying RLGL to PageRank computation, one
can choose a more effective green light scheduling strategy in comparison with that of GSo-PR,  that can significantly improve the rate of convergence.

Moreover, our results make it possible to apply the family of Gauss-Southwell-type methods to the solution of general Markov chains, which greatly expands its applicability. Indeed, by construction, $C_t-C_0$ in \eqref{eq:recursion_H_res} is merely a residual, which we iteratively reduce to zero. To the best of our knowledge,~\cite{mcsherry2005uniform} is the only work that attempts to apply a Gauss-Southwell-type method to general Markov chains, but it builds on GSo-PR by splitting $P=A+B$, where $B$ is a rank-1 matrix, which in fact plays the role of the restart and defines the speed of convergence. This is much more restrictive than our proposed method, of which performance does not depend on any special representation of $P$.
We emphasize that to the best of our knowledge there is no proof of convergence of the Gauss-Southwell method neither for general linear systems, nor for solving Markov chains. Our approach uses the normalization with the history process $H_t$, together with interpretation of the process $C_t$ through the coupling of two Markov chains. Together, these techniques were not used before, and  enable the formal proof of convergence.

\subsubsection{OPIC algorithm}

The OPIC algorithm~\cite{Abiteboul2003OPIC,Litvak2008Performance} was our initial starting point for developing the RLGL method. In contrast to the RLGL, OPIC fixes positive $C_0$ such that $||C_0||_1=1$. At step $t\ge 1$, in OPIC, exactly as in RLGL, nodes in $G_t$ move all their cash to their neighbors. Thus, in OPIC the cash at every node $i$ remains non-negative so $||C_0-C_t||_1$ remains of the order $O(1)$ as $t\to\infty$. In this case, convergence is achieved because the total history $H_{t}{\bf 1}$ in the denominator of the error-term in the right-hand side of (\ref{eq:convergence}) grows to infinity. Since the total amount of cash in the system is bounded, the total history grows to infinity at most linearly in $t$, which results in the speed of convergence $O(1/t)$.

\section{Exponential convergence of the RLGL algorithm}
\label{sec:exp_convergence}


In this section we investigate the exponential convergence of the RLGL algorithm. This is mainly determined by the exponential convergence of $||C_t||_1$ to zero. In Section~\ref{ssec:general_conditions} we establish general sufficient conditions for the exponential convergence of $||C_t||_1$ and discuss how this result can be applied using the total variation distance between two Markov chains. In Sections~\ref{ssec:dobrushin} -- \ref{ssec:geometric} we use this approach to establish exponential convergence and evaluate its rate in three natural special cases.

\subsection{General sufficient conditions for the exponential convergence}
\label{ssec:general_conditions}

Recall that $||C_t||_1$ is a possibly random function of sequence $\bG$ of sets that receive green light. The next lemma formalizes the fact that exponential convergence of the RLGL algorithm is determined by the exponential convergence of $\E(||C_t||_1)$.

\begin{lemma}\label{th:general} Assume that the RLGL algorithm is applied to the transition matrix $P$ of an ergodic Markov chain on state space $[N]$. If
\begin{equation}
\label{eq:condition_nu_P}
\E\left(||C_t||_1\right) \le a\rho^t, \quad t\ge 0,
\end{equation}
then for any function $f(t)> 0$ such that $\lim_{t\to\infty} f(t) =\infty$, it holds that
\begin{equation}
\label{eq:exp_conv_for_P}
\P\left(||C_t||_1\le f(t) \rho^t\right) = 1-o(1), \quad  \mbox{as $t\to\infty$}.
\end{equation}
If, in addition, $\inf_{t>0} |H_t{\bf 1}| >0$,
then \[||\hat \pi_t-\pi^*||_1= O_{\P}(f(t)\rho^t), \quad  \mbox {as $t\to\infty$},\]
where $O_{\P}(\cdot)$ means that the big-O relation holds in probability.
\end{lemma}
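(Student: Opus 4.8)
The plan is to handle the two conclusions separately: the probabilistic statement \eqref{eq:exp_conv_for_P} follows from a one-line tail bound, and the error estimate on $\hat\pi_t$ then follows by feeding that bound into the exact representation \eqref{eq:convergence}.

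For \eqref{eq:exp_conv_for_P}, I would apply Markov's inequality. Since $||C_t||_1\ge 0$, for each $t$ we have $\P\bigl(||C_t||_1 > f(t)\rho^t\bigr)\le \E(||C_t||_1)/(f(t)\rho^t)$, and the hypothesis \eqref{eq:condition_nu_P} bounds the numerator by $a\rho^t$, so the right-hand side is at most $a/f(t)$. As $f(t)\to\infty$ this tends to $0$, giving $\P(||C_t||_1\le f(t)\rho^t)=1-o(1)$. No further probabilistic input is needed for this step.

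For the second conclusion, the starting point is the exact identity \eqref{eq:convergence}. Because RLGL is initialized with $C_0={\bf 0}$, that identity reduces to $\hat\pi_t-\pi^* = -\tfrac{1}{H_t{\bf 1}}\,C_t Z$, where $Z:=\sum_{k=0}^\infty(P^k-{\bf 1}\pi^*)$. The crucial point is that $Z$ is a fixed, $t$-independent matrix: ergodicity gives $P^k\to{\bf 1}\pi^*$ geometrically, so the series converges absolutely and $\kappa:=\max_i\sum_j|Z_{ij}|<\infty$. Taking $\ell_1$-norms of the row vector $C_t Z$ and using the submultiplicative bound $||C_t Z||_1\le \kappa\,||C_t||_1$ yields $||\hat\pi_t-\pi^*||_1\le \tfrac{\kappa}{|H_t{\bf 1}|}\,||C_t||_1$. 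Under the added hypothesis $h:=\inf_{t>0}|H_t{\bf 1}|>0$, the prefactor is bounded uniformly by $\kappa/h$, so $||\hat\pi_t-\pi^*||_1\le (\kappa/h)\,||C_t||_1$ holds deterministically for all $t$.

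Combining the two steps closes the argument: on the event $\{||C_t||_1\le f(t)\rho^t\}$, which has probability $1-o(1)$ by the first step, the deterministic bound gives $||\hat\pi_t-\pi^*||_1\le(\kappa/h)f(t)\rho^t$, which is exactly $||\hat\pi_t-\pi^*||_1=O_{\P}(f(t)\rho^t)$. I expect the only delicate ingredient to be the finiteness of $Z$ together with the $t$-independence of $\kappa$; this rests entirely on the spectral gap (ergodicity) of $P$ and is already implicit in the derivation of \eqref{eq:convergence}, so it may be taken as given. Everything else — Markov's inequality, submultiplicativity of the norm, and the definition of $O_{\P}$ — is routine, and in fact the conclusion is marginally stronger than stated, since the bound holds on a single high-probability event with the explicit constant $\kappa/h$.
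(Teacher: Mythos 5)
Your proposal is correct and follows essentially the same route as the paper's proof: Markov's inequality combined with \eqref{eq:condition_nu_P} for the first claim, and the identity \eqref{eq:convergence} with $C_0={\bf 0}$ together with boundedness of $\sum_{k=0}^\infty(P^k-{\bf 1}\pi^*)$ (which the paper justifies by citing convergence of this series for ergodic chains) and the uniform lower bound on $|H_t{\bf 1}|$ for the second. Your explicit identification of the induced norm constant $\kappa=\max_i\sum_j|Z_{ij}|$ is just an unpacking of the paper's $\|\cdot\|_{1\to 1}$ notation, so there is no substantive difference.
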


\begin{proof}
From Markov's inequality and \eqref{eq:condition_nu_P} we have:
\begin{align*}
\P(||C_t||_1> f(t) \rho^t)&\le \frac{\E(||C_t||_1)}{f(t) \rho^t} \le \frac{a}{f(t)}\to 0,
\quad \mbox{ as $t\to\infty$,}\end{align*}
which proves \eqref{eq:exp_conv_for_P}. Furthermore,
when $\inf_{t>0} |H_t {\bf 1}| = h>0$ assuming $C_0={\bf 0}^T$, by \eqref{eq:convergence} we have
\[
||\hat \pi_t - \pi^*||_1 \le \frac{||C_t||_1}{h}\left|\left|\sum_{k=0}^\infty(P^k - {\bf 1} \pi^*)\right|\right|_{1\to 1},
\]
where the subindex  $1 \to 1$ means that we are using the induced operator norm for an operator acting from one space with 1-norm
to another space with 1-norm.
The term $\sum_{k=0}^\infty(P^k - {\bf 1} \pi^*)$ is bounded, since the series is convergent
for an ergodic Markov chain (see Appendix~A.5 in \cite{Puterman}).
Thus, the RLGL algorithm converges exponentially at the same rate as $||C_t||_1$.
\end{proof}

\begin{remark}
\label{remark:det}
If $\bG$ is deterministic, then $||C_t||_1$ and $||\hat \pi_t-\pi^*||_1$ are deterministic as well, so we have $||C_t||_1=\E(||C_t||_1)\le a\rho^t$ and  $||\hat \pi_t-\pi^*||_1=O(\rho^t)$.
\end{remark}

The condition  $\inf_{t>0} |H_t{\bf 1}|>0$ is important because  $\hat \pi_t = \frac{H_t}{H_t{\bf 1}}$, so we need to separately address the case when $H_t{\bf 1} = 0$ for some $t>0$, or $\lim_{t\to\infty}H_t{\bf 1} = 0$.

In the particular case $H_t = {\bf 0}^T$ it follows from \eqref{eq:recursion_H_res} that $C_t-C_0={\bf 0}^T$, so the algorithm returns the trivial solution $H_t= {\bf 0}^T$ of \eqref{eq:recursion_H_res}. This can occur even when $P$ is aperiodic and irreducible, as in Example~\ref{ex:counterexample} below.

\begin{example}
\label{ex:counterexample}

Consider a Markov chain with four states $\{1,2,3,4\}$ and the following transition probability matrix:

\begin{equation}
\label{eq:counterexampleP}
P = \left(\begin{array}{rrrr} 0&0.5&0.5&0\\0&0&1&0\\0&0&0&1\\1&0&0&0\end{array}\right).
\end{equation}
It is easy to see that $P$ is aperiodic and irreducible, $\pi^*= (2/7, 1/7, 2/7, 2/7)$. Take $M_0 = (1,0,0,0)$. Then after the initial step of the algorithm, we have $C_1 = (-1, 0.5,0.5,0)$, $H_1=(1,0,0,0)$. Now, if $G_1=\{1\}$, then $H_2 = C_2 = (0,0,0,0)$.

\end{example}

Notice that if $||C_t||_1$ converges exponentially fast to zero, then $H_t{\bf 1}$  converges to some limit as $t\to\infty$. Since the cash can be positive or negative, it may happen that $H_t{\bf 1}$ is zero for some $t$ or its limit is zero as $t\to\infty$. Specifying precise conditions when it happens is beyond the scope of this paper. It is clear, however, that a very delicate balance is needed to make $H_t{\bf 1}$ to be equal  to or to converge to any specific value, including zero. In practice, if we observe $H_t{\bf 1}=0$, then we simply restart the algorithm if $\bG$ is stochastic, and modify $\bG$ slightly if $\bG$ is deterministic. In the numerous experiments performed for this study (see Section~\ref{sec:num}), we have never encountered the situation when $H_t{\bf 1}$ is zero or converges to zero.

For highly regular $P$ and $\bG$, \eqref{eq:condition_nu_P} might be easy to establish directly. We will see such example in Section~\ref{sec:two_blocks}, where we study the mean-field stochastic block model with two blocks. In more general cases, one must establish some kind of contraction of $\E(||C_t||_1)$. To this end, we will next show that $||C_t||_1$ equals twice the total variation distance between two specific Markov chains. This result is very useful because it gives access to an array of methods developed for proving  exponential convergence  of Markov chains, such as coupling and minorization conditions~\cite{rosenthal1995convergence}. We will use this for proving exponential convergence in Subsections~\ref{ssec:dobrushin}--\ref{ssec:geometric}. The next definition introduces the two Markov chains and their notations, which we will us throughout the paper.

\begin{definition}
\label{def:XX'}
Let  $\bX=(X_0, X_1,\ldots)$  be an inhomogeneous Markov chain with transition probability matrix $P(G_t)$ from $X_t$ to $X_{t+1}$. Define another inhomogeneous Markov chain $\bX'=(X_0', X_1',\ldots)$ as follows.  At $t=0$, $\bX'$  stays unchanged, $X'_1 = X'_0$. At $t\ge 1$, $\bX'$ obeys the same rules as $\bX$, so the transition from $X'_t$ to $X'_{t+1}$ occurs according to $P(G_t)$.
\end{definition}
In fact, each of the two Markov chains $\bX$ and $\bX'$ describes the movement of a `random particle' of cash. In both cases, when $t>0$, if a particle is in $G_t$, it moves according to the transition matrix $P$. The difference between $\bX$ and $\bX'$ is only in the first step,  at $t=0$. A particle guided by $\bX$ moves according to $P(G_0)=P$, while a particle guided by $\bX'$ does not move.

We denote by $\pi_t$ and $\pi_t'$ the probability distributions of, respectively, $X_t$ and $X_t '$ at time $t\ge 0$, and let
\[d_{TV}(\pi_t,\pi'_t)=\frac{1}{2}||\pi_t-\pi'_t||_1\]
be the total variation distance between $X_t$ and $X_t'$. Note that $\pi_t,\pi_t'$ and $d_{TV}(\pi_t,\pi'_t)$ depend on $\bG$, and can be random if $\bG$ is random.

Now notice that on the right-hand side of (\ref{eq:cn}), $M_0$ and $M_0P$ are probability vectors, and the matrix product contains the $(t-1)$-step transition probabilities of $\bX$ and $\bX '$. Hence, by setting  $\pi_0=\pi_0'=M_0$, we get $\pi_1= PM_0$ and $\pi_1'=M_0$, so we can express $C_t$ as follows:
\begin{align}
\label{eq:Cpi-pi'}
&C_t=\pi_t-\pi'_t,\\
\label{eq:C_tv0}
&||C_t||_1 = ||(\pi_1-\pi_1')\prod_{k=1}^{t-1}P(G_k)||_1 = 2d_{TV}(\pi_t,\pi'_t) = 2\E(d_{TV}(X_t, X_t')| \bG).
\end{align}

\begin{remark}  Notice that we need to set $\pi'_1=\pi_0$ and $\pi_1=\pi_0P(G_0)$ solely for the purpose of obtaining the term $H_tP$ on the right-hand side of \eqref{eq:recursion_H_res}, so that $\hat \pi_t$ converges to the correct limit $\pi^*$. We emphasize that the interpretation of $||C_t||_1$ through the total variation distance holds for any $\pi_1$ and $\pi_1'$.
\end{remark}

\begin{remark} {
Interestingly, we can immediately conclude that $d_{TV}(\pi_t,\pi'_{t})$ is non-increasing in $t$ because  $||C_t||_1$ is non-increasing in $t$ by the construction of the RLGL algorithm, as discussed in the proof of Theorem~\ref{th:T}. When $G_t=[N]$ for all $t\ge 1$, this gives an elegant, and, to the best of our knowledge, new proof that for two Markov chains $X_t$ and $X_t'$ with the same transition matrix $P$ and arbitrary initial distribution, $d_{TV}(X_{t},X'_{t})$ is  a non-increasing function of $t$.}
\end{remark}

The next Theorem~\ref{th:T} provides a useful upper bound for $\E(||C_t||_1)$ in terms of the number of contractions by some factor $\delta \in (0,1)$.

\begin{theorem}\label{th:T} Let $P$ be a transition matrix of an ergodic Markov chain on state space $[N]$, and let $\bX$ and $\bX'$ be as in Definition~\ref{def:XX'}. Assume that there exist $\delta\in (0,1)$, $a>0$, and an increasing, possibly random, sequence of time instants $(T_0=1, T_1, T_2,\ldots) ={\bf T} := {\bf T}(\delta)$, such that
\begin{equation}
\label{eq:alpha}
\E(d_{TV}(X_{T_n},X'_{T_n})|\bT) \le a\, \delta^n, \quad n\ge 1.
\end{equation}
Let $\nu(t)$ be the number of instances $T_n$ on the interval $(0,t]$, formally defined as
\[\nu(t):=\nu(t,\delta)=\sup\{n> 0: T_n:=T_n(\delta)\le t\}.\]
Then
\begin{equation}
\label{eq:nu_bound}
\E(||C_t||_1)\le 2a\E(\delta^{\nu(t)}), \quad t\ge 0.\end{equation}
\end{theorem}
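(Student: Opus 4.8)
The plan is to combine a pathwise monotonicity property of $||C_t||_1$ with the hypothesis \eqref{eq:alpha} evaluated at the random index $\nu(t)$, the two being glued together by conditioning on $\bT$.

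First I would record that $||C_t||_1$ is non-increasing in $t$ along every realization of $\bG$ --- this is the monotonicity already invoked in the remark preceding the theorem. It follows immediately from \eqref{eq:cn}: since $M_t=C_tI(G_t)$, the recursion reads $C_{t+1}=C_tP(G_t)$, and $P(G_t)=I-I(G_t)(I-P)$ is row-stochastic (its rows in $G_t$ are rows of $P$, the remaining rows are those of $I$). For any row-stochastic $Q$ and any row vector $v$, the triangle inequality gives $||vQ||_1=\sum_j|\sum_i v_iQ_{ij}|\le\sum_i|v_i|\sum_jQ_{ij}=||v||_1$, whence $||C_{t+1}||_1\le||C_t||_1$.

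Next, by the very definition of $\nu(t)$ we have $T_{\nu(t)}\le t$, so monotonicity yields, pathwise,
$$||C_t||_1\le||C_{T_{\nu(t)}}||_1=2\,d_{TV}(\pi_{T_{\nu(t)}},\pi'_{T_{\nu(t)}}),$$
where the last equality is the identity \eqref{eq:C_tv0}. I would then take the conditional expectation given $\bT$. The key point is that, conditionally on $\bT$, the counter $\nu(t)$ is deterministic, being a function of $\bT$ alone; hence $T_{\nu(t)}$ is a frozen time instant and we may substitute the determined value $n=\nu(t)$ for the deterministic index in \eqref{eq:alpha}. This gives
$$\E\big(||C_t||_1\,\big|\,\bT\big)\le 2\,\E\big(d_{TV}(X_{T_{\nu(t)}},X'_{T_{\nu(t)}})\,\big|\,\bT\big)\le 2a\,\delta^{\nu(t)},$$
and taking the outer expectation over $\bT$ delivers the claim $\E(||C_t||_1)\le 2a\,\E(\delta^{\nu(t)})$.

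The main obstacle is the bookkeeping around the random time $T_{\nu(t)}$: the hypothesis \eqref{eq:alpha} is a per-$n$ bound for a fixed deterministic index, so one must first condition on $\bT$ to make $\nu(t)$ (and thus $T_{\nu(t)}$) deterministic before the substitution $n\mapsto\nu(t)$ is legitimate, and one must check that the pathwise inequality $||C_t||_1\le||C_{T_{\nu(t)}}||_1$ is preserved under this conditional expectation. A minor loose end is the boundary case $\nu(t)=0$ (when $T_1>t$): there $\delta^{\nu(t)}=1$ and one falls back on the crude bound $||C_t||_1\le||C_1||_1=2\,d_{TV}(M_0P,M_0)\le 2$, which is consistent with the asserted inequality as soon as $a\ge 1$.
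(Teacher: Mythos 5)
Your proof is correct and follows essentially the same route as the paper's: pathwise monotonicity of $||C_t||_1$, the bound $||C_t||_1\le||C_{T_{\nu(t)}}||_1$ from $T_{\nu(t)}\le t$, the total-variation identity, and conditioning on $\bT$ to freeze $\nu(t)$ before invoking the hypothesis. Your two small refinements --- deriving monotonicity from the row-stochasticity of $P(G_t)$ rather than the paper's informal cash-cancellation argument, and flagging the $\nu(t)=0$ boundary case that the paper silently skips --- only tighten the same argument.
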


\begin{proof} First, notice that by construction of the RLGL algorithm, $||C_t||_1$ in non-increasing in $t$.
Indeed, if, for example, a positive cash arrives to a node with a positive cash, then the total amount of positive cash remains the same, but if a positive cash arrives to a node with negative cash, then the positive and the negative cash cancel each other, and only the overshoot (positive or negative) remains in the node. Therefore, since $T_{\nu(t)}\le t$, we have
\[||C_t||_1\le ||C_{T_{\nu(t)}}||_1.\]
Moreover, $\nu(t)$ is a counting process completely determined by $\bT$. In other words, conditioned on $\bT$, $\nu(t)$ can be treated as a deterministic variable. Hence, using \eqref{eq:C_tv0} and \eqref{eq:alpha}, we obtain:
$$
\E(||C_t||_1) \le E(||C_{T_{\nu(t)}}||_1) =  2\E\left(d_{TV}(X_{T_{\nu(t)}},X'_{T_{\nu(t)}})\right) =
$$
$$
2\E\left(\E\left[d_{TV}(X_{T_{\nu(t)}},X'_{T_{\nu(t)}})|\bT\right]\right)
\le 2a\,\E(\delta^{\nu(t)}).
$$
\hfill \end{proof}

Theorem~\ref{th:T} implies that $\E(||C_t||_1)$ converges exponentially fast to zero if $\bG$, deterministic or stochastic, is such that it is possible to construct $\bT$, of which the corresponding $\nu(t)$ satisfies
\begin{equation}
\label{eq:exp_convergence_P}
\E(\delta^{\nu(t)})\le b\rho^t \quad \mbox{for some $\rho\in (0,1)$, $b>0$}.
\end{equation}

\begin{remark}
Note that, formally, \eqref{eq:exp_convergence_P} does not follow from the statement `{\it $\delta^{\nu(t)}\le b\rho^t$ with high probability}'. For example, \eqref{eq:exp_convergence_P} is violated if $\P(\delta^{\nu(t)}\le b\rho^t)=1-1/t$, and $\P(\nu(t)=0)=1/t$.
\end{remark}

Of course, sequence $\bT$ can be found only if $\bG$ allows convergence. As a very simple example of a poorly chosen $\bG$, assume that $P_{11}=0$, and $G_t=\{1\}$ for all $t\ge 1$, so that only node~1 receives green light. Then $||C_t||_1$ will stop decreasing after $t=1$, and we have $\bT=(T_1)$, a finite sequence that obviously violates \eqref{eq:exp_convergence_P}.
Furthermore, if the algorithm converges at finite time $t^*$, then $||C_t||_1=0$ for all $t>t^*$, and \eqref{eq:alpha} is formally satisfied with $\bT=(t^*+1,t^*+2,\ldots)$, and any $\delta\in (0,1)$. Also, observe that the sequence $\bT(\delta)$ is not unique, for example, the sequence $\bT(\delta^2)=(T_0(\delta),T_2(\delta),T_4(\delta),\ldots)$ also satisfies \eqref{eq:alpha}. Finding smaller $T_n$ for a given $\delta$ will yield better convergence guarantees.

Below in Sections~\ref{ssec:dobrushin}--\ref{ssec:geometric} we show how Theorem~\ref{th:T} is used to prove exponential convergence in natural  examples with arbitrary positive recurrent and aperiodic $P$.

\subsection{$P$ with Dobrushin coefficient smaller than one, generalized cyclic $\bG$}
\label{ssec:dobrushin}
\label{sec:dobrushin}

The Dobrushin coefficient is defined as
\begin{equation}
\label{eq:dobrushin}
\delta(P)=1-\min_{i,i'\in [N]}\sum_{j\in [N]}\min\{p_{ij},p_{i'j}\}.
\end{equation}
In this section we assume that $\delta(P)<1$.  This includes important examples such as PageRank, where $W$ is the transition matrix of a simple random walk on a (directed) graph, and $P$ is the Google matrix
$P = cW +\frac{(1-c)}{N}{\bf 1}{\bf 1}^T$, $c\in (0,1)$. In this case, $\delta(P)\le c$.

We consider a class of $\bG$ that gives green light to all nodes within a finite time $m$:
\begin{equation}
\label{eq:cycle}
\cup_{l=1}^m G_{mn+l} = [N].
\end{equation}  We call this  a `generalized cyclic' scheduling because it consists of cycles of length $m$ (such that all nodes receive green light during a cycle), but  the cycles do not need to be identical, and can have a random order as well. {The RLGL algorithm is agnostic to the value of $m$.
In the consensus literature \cite{dwork1988consensus} such scenario is referred to as {\it partial
synchrony} and in parallel and distributed computing literature \cite{BertsekasTsitsiklis1989}
such scenario is referred to as {\it partial asynchrony}.}

Our goal now is to establish exponential convergence of the RLGL algorithm in this specific case.  The result is formulated in the next theorem.

\begin{theorem}
\label{th:coupling1} If $P$ is such that $\delta(P)<1$ and $\bG$ satisfies \eqref{eq:cycle}, we have that:
\begin{itemize}
\item[-] if $\bG$ is deterministic then
\[||C_t||_1\le 2\delta(P)^{-1} \delta(P)^{\frac{t}{m}}, \quad t\ge 0;\]
\item[-] if $\bG$ is stochastic then
\[
\P\left((||C_t||_1)\le f(t)\delta(P)^{\frac{t}{m}}\right)=1-o(1), \quad t\to \infty.
\]
for any $f(t)>0$ such that $\lim_{t\to\infty}f(t)=\infty$.
\end{itemize}
If, in addition, $\inf_{t>0} |H_t{\bf 1}| >0$, it holds that:
\begin{itemize}
\item[-] if $\bG$ is deterministic then $||\hat \pi_t-\pi^*||_1 = O(\delta(P)^{\frac{t}{m}})$;
\item[-] if $\bG$ is stochastic then $||\hat \pi_t-\pi^*||_1 = O_{\P}(f(t)\delta(P)^{\frac{t}{m}})$ for any  $f(t)>0$ such that $\lim_{t\to\infty}f(t)=\infty$.
\end{itemize}
\end{theorem}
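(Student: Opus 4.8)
The plan is to reduce the statement to the two engines already available, namely Theorem~\ref{th:T} and Lemma~\ref{th:general}, and to supply the one missing ingredient by a coupling argument tailored to the covering condition \eqref{eq:cycle}. Concretely, I would set $\delta=\delta(P)$ and choose the sequence of cycle boundaries $T_n=mn+1$, so that the $n$-th cycle consists of the steps $mn+1,\ldots,m(n+1)$. The target is then exactly the hypothesis \eqref{eq:alpha} of Theorem~\ref{th:T}, i.e.\ a per-cycle contraction $\E\!\left(d_{TV}(X_{T_n},X'_{T_n})\mid \bT\right)\le \delta(P)^n$; once this is in hand, everything else is bookkeeping.

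For the contraction I would build an explicit coupling of the particles $\bX$ and $\bX'$ of Definition~\ref{def:XX'}, exploiting the total-variation identity \eqref{eq:C_tv0}. The coupling is \emph{sticky}: as soon as the two particles occupy a common node I keep them together forever by driving their future moves with the same randomness (whenever the shared node is green they jump to the same target, otherwise both stay put). Before they meet I use the covering property: since every node receives green light during a cycle, each particle is forced to make at least one $P$-move inside the cycle. At such a move I apply the maximal coupling of the two relevant rows of $P$, whose success probability is at least $\min_{i,i'}\sum_j\min\{p_{ij},p_{i'j}\}=1-\delta(P)$ by \eqref{eq:dobrushin}. Thus in each cycle the particles meet with probability at least $1-\delta(P)$, and once merged they stay merged, which yields $\P(X_{T_n}\ne X'_{T_n})\le \delta(P)^n$ and hence \eqref{eq:alpha} with $a=1$.

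Given the contraction, the remainder is immediate. Theorem~\ref{th:T} gives $\E(||C_t||_1)\le 2\,\E(\delta(P)^{\nu(t)})$; because $m$ is fixed, $\bT$ and therefore $\nu(t)=\lfloor (t-1)/m\rfloor$ are deterministic, and one checks $\nu(t)\ge t/m-1$, so that $\E(||C_t||_1)\le 2\delta(P)^{t/m-1}=2\delta(P)^{-1}\delta(P)^{t/m}$. For deterministic $\bG$, Remark~\ref{remark:det} upgrades this to the pathwise bound $||C_t||_1\le 2\delta(P)^{-1}\delta(P)^{t/m}$, and the corresponding estimate $||\hat\pi_t-\pi^*||_1=O(\delta(P)^{t/m})$ follows from Lemma~\ref{th:general} with $\rho=\delta(P)^{1/m}$ under $\inf_{t>0}|H_t{\bf 1}|>0$. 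For stochastic $\bG$ the same expectation bound feeds Lemma~\ref{th:general} with $a=2\delta(P)^{-1}$ and $\rho=\delta(P)^{1/m}$, whose Markov-inequality step delivers $\P(||C_t||_1\le f(t)\delta(P)^{t/m})=1-o(1)$ and, again under the condition on $H_t{\bf 1}$, $||\hat\pi_t-\pi^*||_1=O_{\P}(f(t)\delta(P)^{t/m})$ for any $f(t)\to\infty$.

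The main obstacle is the per-cycle coupling, and specifically the asynchrony of the green lights: the two particles are generally forced to move at \emph{different} steps within a cycle, so one cannot simply apply a single maximal-coupling step to both simultaneously, and a particle may move again after a tentative meeting and thereby separate. Making the argument rigorous requires committing the coupling at the right green-light instants and verifying that stickiness survives the remaining asynchronous moves of the cycle. This is also the delicate point for the constant and the precise contraction factor, since a naive coupling only controls the Dobrushin coefficient of the one-cycle product $\prod_{l=1}^m P(G_{mn+l})$ rather than $\delta(P)$ itself, and one must argue that the covering condition is enough to recover the rate $\delta(P)$.
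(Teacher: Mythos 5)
Your overall architecture coincides with the paper's proof: the same choice $T_n=nm+1$, the same target contraction \eqref{eq:alpha} with $\delta=\delta(P)$ and $a=1$, the same appeal to Theorem~\ref{th:T}, Lemma~\ref{th:general} and Remark~\ref{remark:det}, and the same final bookkeeping $\nu(t)=\lfloor (t-1)/m\rfloor\ge t/m-1$. That part of your argument is correct and complete.

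The gap is exactly the one you flag in your closing paragraph, and it is not a peripheral technicality --- it is the entire content of the paper's proof of this theorem. Because the rows of each $P(G_l)$ outside $G_l$ are identity rows, every individual $P(G_l)$ has Dobrushin coefficient equal to $1$, so no per-step submultiplicativity recovers the factor $\delta(P)$; and because the two particles are forced to move at different instants within a cycle, a single simultaneous maximal-coupling step is unavailable. The paper resolves this with a cancel-and-replan device: let $t_1\le t_1'$ be the first green-light instants of $\hat \bX$ and $\hat \bX'$ in the cycle; maximally couple the not-yet-realized transition of $\hat \bX'$ at $t_1'$ with the transition of $\hat \bX$ at $t_1$ as in \eqref{eq:t1}; whenever $\hat \bX$ receives green light again at some $t_s\le t_1'$, discard the pending coupling and re-couple the transition of $\hat \bX'$ at $t_1'$ with the new transition of $\hat \bX$ at $t_s$, as in \eqref{eq:ts}. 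This is legitimate precisely because $\hat \bX'$ has not moved yet, and after at most $t_1'-t_1$ re-plannings one arrives at \eqref{eq:coupling_t1}: the two particles coincide at time $t_1'+1$ with probability at least $1-\delta(P)$, after which stickiness is automatic since coinciding particles receive green light simultaneously, so the meeting survives to the cycle boundary. Your proposal names the obstacle (``committing the coupling at the right green-light instants'') but does not supply this construction, so as written the per-cycle contraction at rate $\delta(P)$ --- the hypothesis \eqref{eq:alpha} on which everything else rests --- is asserted rather than proved.
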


\begin{proof}
We will construct a sequence $\bT(\delta(P))$ that satisfies the conditions of Theorem~\ref{th:T}. Let $(\bX,\bX')$ be as in Definition~\ref{def:XX'}. It follows from the coupling inequality that
\begin{equation}
\label{eq:coupling_inequality}
\E(d_{TV}(X_t, X_t')| \bT) \le \P(\hat X_t\ne \hat X'_t|\bT), \quad t\ge 0,
\end{equation}
where,  conditioned on $\bT$, $(\hat \bX, \hat \bX')$ is a coupling of $(\bX, \bX')$. We will prove the theorem by constructing a coupling $(\hat \bX, \hat \bX')$  such that $\P(\hat X_t\ne \hat X'_t)$ decreases by factor $\delta(P)$ every $m$ steps.

Let us now construct a suitable coupling $(\hat \bX, \hat \bX')$. First, using the standard approach, set $\hat X_1 = X_1$ and $\hat X'_1 = X'_1$. For each $n\ge 0$, if $\hat X_{nm+1}= \hat X'_{nm+1}$ then we let the two processes continue together: $\hat X_{t}= \hat X'_{t}$, $t\ge nm+1$. If $\hat X_{nm+1}\ne \hat X'_{nm+1}$, then we need to construct a coupling on the interval $[nm+1,(n+1)m)$. We do this as follows. Let  $t_1,t_1' \in [nm+1,(n+1)m)$ be the times when, respectively $\hat \bX$ and $\hat \bX'$ receive green light for the first time in the cycle. It follows from \eqref{eq:cycle} that such $t_1, t_1'$ exist. Without loss of generality, assume that $t_1\le t_1'$. We will construct a coupling such that
\begin{equation}
\label{eq:coupling_t1}
\P(\hat X_{t'_1+1} = \hat X'_{t'_{1}+1}|\hat X_{t'_1} \ne \hat X'_{t'_{1}}, \hat X_{t'_1}=i, \hat X'_{t'_{1}}=i') = \sum_{j\in[N]}\min\{p_{ij}, p_{i'j}\}.\end{equation}
We start with coupling the transitions of $\hat \bX$ and $\hat \bX'$ at times  $t_1$  and $t_1'$ using a maximal coupling:
\begin{equation}
\label{eq:t1}
\P(\hat X_{t_1+1} = \hat X'_{t'_{1}+1}=j|\hat X_{t_1} =i, \hat X'_{t'_{1}} =i') = \min\{p_{ij}, p_{i'j}\},\quad i,j,k\in [N]. \end{equation}
Now, three cases are possible.
\begin{itemize}
\item If $t_1=t_1'$ then by \eqref{eq:t1}, \eqref{eq:coupling_t1} holds.
\item If $t_1< t_1'$, then we need to consider  two sub-cases.
\begin{itemize}
\item If $\hat \bX$ does not receive green light at $t_1<t\le t_1'$, then $\hat X_{t_1'+1}=\hat X_{t_1+1}$, and by \eqref{eq:t1}, \eqref{eq:coupling_t1} holds.
\item If $\hat \bX$ receives green light at some time $t_s\le t_1'$, $s=2,3,\ldots t_1'-t_1$, then the coupling \eqref{eq:t1} is canceled. Recall that the transition of $\hat \bX'$ in \eqref{eq:t1} was only planned but not yet realized by the time $t_s$ because $
\hat \bX'$ moves at $t=t'_1>t_s$ for the first time. Therefore, this `planned' transition does not need to occur. Instead, we create a new coupling
\begin{equation}
\label{eq:ts}
\P(\hat X_{t_s+1} = \hat X'_{t'_{1}+1}=j|\hat X_{t_s} =i, \hat X'_{t'_{1}} =i') = \min\{p_{ij}, p_{i'j}\},\quad i,j,k\in [N]. \end{equation}
Now, the procedure can be repeated. Specifically, if $\hat \bX$ does not receive the green light at $t_s<t\le t_1'$ then \eqref{eq:coupling_t1} follows from \eqref{eq:ts}. If $\hat \bX$ receives green light at some time $t_{\tilde s} \le t_1'$, $\tilde s=s+1,s+2,t_1'-t_1$, then we cancel coupling \eqref{eq:ts}, reset $s\leftarrow \tilde s$, and apply \eqref{eq:ts} again. This needs to be done at most $t_1'-t_1$ times as long as $\hat \bX$ keeps receiving green light at times $t_{s} \le t_1'$. By construction, for any $t_s\le t_1'$ when $\hat \bX$ receives green light for the last time before and including $t_1'$, \eqref{eq:ts} holds.
\end{itemize}
\end{itemize}
We conclude that for our constructed $\hat \bX$ and $\hat \bX'$, \eqref{eq:coupling_t1} holds, therefore, we obtain:
\begin{align}
\nonumber
\P&(\hat X_{(n+1)m+1}  \ne \hat X'_{(n+1)m+1}|\hat X_{nm+1}\ne \hat X'_{nm+1}) \le \P(\hat X_{t'_1+1} \ne \hat X'_{t'_{1}+1}|\hat X_{nm+1}\ne \hat X'_{nm+1})\\
\nonumber
&=1-\sum_{i,i'\in [N]} \P(\hat X_{t'_1+1} = \hat X'_{t'_1+1}|\hat X_{nm+1}\ne \hat X'_{nm+1}, \hat X_{t'_1} =i, \hat X'_{t'_1} =i')\\
\nonumber
&\qquad\qquad\qquad\qquad\qquad\qquad\qquad\qquad\qquad \times P(\hat X_{t'_1} =i, \hat X'_{t'_1} =i'|\hat X_{nm+1}\ne \hat X'_{nm+1})\\
\nonumber
&= 1- \sum_{i,i'\in [N]} \P(\hat X_{t'_1} =i, \hat X'_{t'_1} =i'|\hat X_{nm+1}\ne \hat X'_{nm+1})\sum_{j\in [N]}  \P(\hat X_{t'_1+1} = \hat X'_{t'_1+1}=j|\hat X_{t'_1} =i, \hat X'_{t'_1} =i')\\
\nonumber
& = 1-\sum_{i,i'\in [N]} \P(\hat X_{t'_1} =i, \hat X'_{t'_1} =i'|\hat X_{nm+1}\ne \hat X'_{nm+1})\sum_{j\in [N]}  \min\{p_{ij}, p_{i'j}\}\\
\label{eq:coupling1}
&\le 1- \inf_{i,i'\in [N]}\sum_{j\in [N]}  \min\{p_{ij}, p_{i'j}\}  = \delta(P),
\end{align}
where the second equality is due to the Markov property. Iteratively applying  \eqref{eq:coupling1}, we obtain that the deterministic sequence $\bT$ of time instants $T_n=T_n(\delta(P)) = nm+1$, $n\ge 1$, satisfies
\[\E(d_{TV}(X_{T_n},X'_{T_n})|\bT)=\E(d_{TV}(X_{T_n},X'_{T_n}))\le \P(\hat X_{T_n}\ne \hat X'_{T_n}) \le  (\delta(P))^n.\]
Thus, by Theorem~\ref{th:T} we have
\[\E(||C_t||_1)\le 2\delta(P)^{\lfloor\frac{t-1}{m}\rfloor}\le 2\delta(P)^{-1} \delta(P)^{\frac{t}{m}},\] and the result follows from Lemma~\ref{th:general} and Remark~\ref{remark:det}.
\end{proof}

\subsection{Cyclic $\bG$}
\label{ssec:irreducible}

In this section we obtain the exponential convergence for  deterministic cyclic $\bG$. Such deterministic cycling, or round robin, is a very common scheduling approach in computing systems. The main result is stated in the next theorem.

\begin{theorem}
\label{th:coupling2} Let $B_1,B_2,\ldots,B_m\subseteq [N]$ be such that $\cup_{l=1}^mB_l = [N]$, and $G_{nm+l} = B_l$, $n\ge 0$, $1\le l\le m$.
Denote by
\[P({\bf G}_m)=\prod_{l=1}^mP(G_l)\]
the transition probability matrix of one complete cycle. If,  for some $r$, the matrix $P({\bf G}_m)^r$ is Markov matrix
$($see \cite{seneta2006book}$)$, i.e.,
\begin{equation}
\label{eq:markov-cycle}
\left( \left(P({\bf G}_m)\right)^{r}\right)_{ij_0}> 0 \mbox{ for some  $j_0$ and all $i\in[N]$},
\end{equation}
then
 \[||C_t||_1=\E(||C_t||_1)\le 2[1-\eta^2]^{-1}(1-\eta^2)^{\frac{t}{rm}}, \quad t\ge 0,\]
where

\begin{equation}
\eta = \inf_{i}\left( \left(P({\bf G}_m)\right)^{r}\right)_{ij_0}.
\end{equation}

If, in addition, $\inf_{t>0} |H_t{\bf 1}| >0$, then  $||\hat \pi_t-\pi^*||_1 = O\left((1-\eta^2)^{\frac{t}{rm}}\right)$.

\end{theorem}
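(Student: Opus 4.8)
The plan is to verify the hypotheses of Theorem~\ref{th:T} with contraction factor $\delta = 1-\eta^2$ along a suitable deterministic sequence $\bT$, and then to invoke Lemma~\ref{th:general} together with Remark~\ref{remark:det}. Since $\bG$ is the fixed cyclic schedule, the pair $(\bX,\bX')$ of Definition~\ref{def:XX'} is a fixed pair of inhomogeneous chains, so every quantity below is deterministic. The cyclic structure gives $\prod_{k=1}^{nm}P(G_k) = (P(\bG_m))^n$, hence over any window of $rm$ consecutive steps the chains advance by the single matrix $(P(\bG_m))^r$. I would therefore set $T_n = nrm+1$ for $n\ge 0$ (so that $T_0=1$, as required), and aim to show that over each block $[T_n,T_{n+1})$ the two chains coalesce with probability at least $\eta^2$.

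The core step is the coupling. Starting from the coupling inequality \eqref{eq:coupling_inequality}, I would build a coalescent coupling $(\hat\bX,\hat\bX')$ as follows: if $\hat X_{T_n} = \hat X'_{T_n}$, the two chains are run together forever after; otherwise I let them evolve \emph{independently} over the block $[T_n,T_{n+1})$. By the Markov-matrix hypothesis \eqref{eq:markov-cycle}, regardless of its starting state each chain lands on the common column $j_0$ at time $T_{n+1}$ with probability at least $\eta = \inf_i ((P(\bG_m))^r)_{ij_0}$. As the two chains are run independently over the block, they are both at $j_0$ — hence equal — at time $T_{n+1}$ with probability at least $\eta^2$. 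Consequently $\P(\hat X_{T_{n+1}}\ne \hat X'_{T_{n+1}}\mid \hat X_{T_n}\ne \hat X'_{T_n})\le 1-\eta^2$, and iterating gives $\P(\hat X_{T_n}\ne \hat X'_{T_n})\le (1-\eta^2)^n$. Combining this with \eqref{eq:C_tv0} and the coupling inequality yields $\E(d_{TV}(X_{T_n},X'_{T_n}))\le (1-\eta^2)^n$, which is precisely the hypothesis \eqref{eq:alpha} of Theorem~\ref{th:T} with $a=1$ and $\delta=1-\eta^2$.

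It then remains to assemble the pieces. The counting process here is $\nu(t)=\lfloor (t-1)/(rm)\rfloor$, so Theorem~\ref{th:T} gives $\E(||C_t||_1)\le 2(1-\eta^2)^{\lfloor (t-1)/(rm)\rfloor}$; using $\lfloor (t-1)/(rm)\rfloor \ge t/(rm)-1$ (exactly as in the proof of Theorem~\ref{th:coupling1}) and the determinism of $\bG$ yields the claimed bound $||C_t||_1 = \E(||C_t||_1)\le 2[1-\eta^2]^{-1}(1-\eta^2)^{t/(rm)}$. Finally, the estimate $||\hat\pi_t-\pi^*||_1 = O((1-\eta^2)^{t/(rm)})$ under $\inf_{t>0}|H_t{\bf 1}|>0$ follows directly from Lemma~\ref{th:general} (with $\rho=(1-\eta^2)^{1/(rm)}$) and Remark~\ref{remark:det}.

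I expect the coupling construction to be the only delicate point. Compared with the maximal-coupling argument of Theorem~\ref{th:coupling1}, the difficulty is that within a cycle the two chains receive green light at different sub-steps, so I cannot couple a single synchronous transition of the two chains; instead I must route each chain independently to the shared state $j_0$ over a full window of $r$ cycles. This independent routing is exactly what forces the per-block contraction factor to be $1-\eta^2$ rather than the $1-\eta$ one would get from a synchronized maximal coupling (equivalently, from bounding the Dobrushin coefficient of $(P(\bG_m))^r$ directly). I would also check that the coalescent coupling has the correct marginals, which holds because the two chains are merged only once they already occupy the same state and thereafter share identical transition dynamics.
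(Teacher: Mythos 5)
Your proposal is correct and follows essentially the same route as the paper: an independent (coalescent) coupling over windows of $rm$ steps, using the Markov-matrix condition to show both chains land on $j_0$ with probability at least $\eta^2$, then $T_n = nrm+1$ fed into Theorem~\ref{th:T}, and finally Lemma~\ref{th:general} with Remark~\ref{remark:det}. The only cosmetic difference is that the paper merges the two chains at the first time they meet (possibly mid-window) rather than only at window boundaries; this does not change the bound.
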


\begin{proof} Let $(\bX,\bX')$  be as in Definition~\ref{def:XX'}. We will again use the coupling inequality \eqref{eq:coupling_inequality}. The coupling $(\hat \bX, \hat \bX')$ is constructed in a standard way as follows. Set $\hat X_1 = X_1$ and $\hat X'_1 = X'_1$. Given ${\bf G}$, Markov chains  $\hat \bX$ and $\hat \bX'$ evolve  independently  until the fist time they meet at some time $s\ge 1$. After that $\hat \bX$ and $\hat \bX'$ continue together: $\hat X_{t}= \hat X'_{t}$, $t\ge s$.
Between the time instants $nm$ and $(n+1)m$, $n\ge 1$, the processes $\hat \bX$ and $\hat \bX'$ make a transition independently according to the transition matrix $P({\bf G}_m)$. Therefore, by \eqref{eq:markov-cycle}, for any $n\ge 0$ we have
\begin{align}
\nonumber
\P&(\hat X_{(n+1)rm}  \ne \hat X'_{(n+1)rm}|\hat X_{nrm}\ne \hat X'_{nrm}) \\
\nonumber
&= 1- \sum_{i,i',j\in [N]}  \P(\hat X_{nrm}=i, \hat X'_{nrm}=i', \hat X_{(n+1)rm} = \hat X_{(n+1)rm}' = j |\hat X_{nrm}\ne \hat X'_{nrm})\\
\nonumber
&\le 1- \sum_{i,i'\in [N]}  \P(\hat X_{nrm}=i, \hat X'_{nrm}=i', \hat X_{(n+1)rm} = \hat X_{(n+1)rm}' = j_0 |\hat X_{nrm}\ne \hat X'_{nrm})\\
&\le 1-\sum_{i,i'\in [N]}  \P(\hat X_{nrm}=i, \hat X'_{nrm}=i' |\hat X_{nrm}\ne \hat X'_{nrm}) \left(P({\bf G}_m)\right)_{ij_0}^{r}\left(P({\bf G}_m)\right)_{i'j_0}^{r}\le 1- \eta^2.
\label{eq:coupling2}
\end{align}
By \eqref{eq:coupling2} and \eqref{eq:coupling_inequality}, we have that
the deterministic sequence $\bT$ of time instants $T_n=nrm+1$, $n\ge 1$, which satisfies
\[
\E(d_{TV}(X_{T_n},X'_{T_n})|\bT)= \E(d_{TV}(X_{T_n},X'_{T_n}))\le \P(\hX_{T_n}\ne \hX'_{T_n})  \le 2(1-\eta^2)^n, n\ge 1.
\]
Thus, by Theorem~\ref{th:T} we have
\[\E(||C_t||_1)\le 2(1-\eta^2)^{\lfloor\frac{t-1}{rm}\rfloor}\le 2(1-\eta^2)^{-1}(1-\eta^2)^{\frac{t}{rm}},\]
and the result follows form Lemma~\ref{th:general} and the fact that $\bG$ is deterministic, see Remark~\ref{remark:det}.
\end{proof}

\bigskip

Note that \eqref{eq:markov-cycle} is a weaker condition than the irreducibility and aperiodicity of $P({\bf G}_m)$ because the latter requires \eqref{eq:markov-cycle} to hold for all $i,j_0\in [N]$. Moreover,  the order in the cycle is important because \eqref{eq:markov-cycle} may hold for one order of the green lights and not hold for some other order, even if the original matrix $P$ is irreducible and aperiodic. This is illustrated in the next example.

\begin{example}
\label{ex:counterexample2}
Continue Example~\ref{ex:counterexample} with $P$ as in \eqref{eq:counterexampleP}. Note that $P$ is aperiodic and irreducible, and
$\pi^*= (2/7, 1/7, 2/7, 2/7)$.

Assume that the green light is given to single nodes in some cyclic order. Note that in this case, if the last node to receive a green light has zero transition probability to itself, then the corresponding column in $P({\bf G_m})$ is null: the matrix will not be irreducible. First, we will give an example of a cycle that satisfies  \eqref{eq:markov-cycle}, and Theorem~\ref{th:coupling2} applies. Then, we will give an example where Theorem~\ref{th:coupling2} does not apply but where convergence may occur depending on the initial cash.

Consider the following cycle: ${\bf G_4} = (\{2\}, \{1\}, \{3\}, \{4\})$. Then we have
\[P({\bf G_4}) = \left(\begin{array}{cccc} 1/2&1/2&0&0\\1&0&0&0\\1&0&0&0\\1&0&0&0\end{array}\right),\]
and Theorem~\ref{th:coupling2} applies with $r=1$ and $\eta=1/2$. For arbitrary $M_0$, $H_t$ and $C_t$ evolve as follows:
\medskip

{\centerline{
{\small
\setlength{\tabcolsep}{1pt}
\resizebox{\columnwidth}{!}{%
\begin{tabular}{|c|c c c c|c c c c|c|}
\hline
$t$&&$H_t$&&&&$C_t$&&&$G_t$\\
\hline
1&$\left(M_{0,1}\right.,$&$ M_{0,2},$&$ M_{0,3},$&$\left. M_{0,4}\right)$&		$\left(M_{0,4}-M_{0,1}\right.,$&$ \frac{M_{0,1}}2-M_{0,2},$&$ \frac{M_{0,1}}2+M_{0,2}-M_{0,3},$&$\left. M_{0,3}-M_{0,4}\right)$&$\{2\}$\\
2&$\left(M_{0,4}\right.,$&$ \frac{M_{0,1}}2,$&$ M_{0,3},$&$ \left.M_{0,4}\right)$&		$\left(M_{0,4}-M_{0,1}\right.,$&$0  ,$&$M_{0,1}-M_{0,3},$&$\left.M_{0,3}-M_{0,4}\right)$&$\{1\}$\\
3&$\left(M_{0,4}\right.,$&$ \frac{M_{0,1}}2,$&$ M_{0,3},$&$ \left.M_{0,4}\right)$&		$\Bigl(0,$&$\frac{M_{0,4}-M_{0,1}}{2},$&$\frac{M_{0,1}+M_{0,4}}{2}-M_{0,3},$&$ M_{0,3}-M_{0,4}\Bigr)$&$\{3\}$\\
4&$\left(M_{0,4}\right.,$&$ \frac{M_{0,1}}2,$&$ \frac{M_{0,1}+M_{0,4}}{2},$&$ \left.M_{0,4}\right)$&	$\Big(0\Big.,$&$\frac{M_{0,4}-M_{0,1}}{2},$&$0,$&$\left.\frac{M_{0,1}-M_{0,4}}{2}\right)$&$\{4\}$ \\
5&$\Big( M_{0,4}\big.,$&$ \frac{M_{0,1}}2,$&$ \frac{M_{0,1}+M_{0,4}}{2}$&$ \left.\frac{M_{0,1}+M_{0,4}}{2}\right)$&		$\left(\frac{M_{0,1}-M_{0,4}}{2}\right.,$&$\frac{M_{0,4}-M_{0,1}}{2},$&$0,$&$0\Bigr)$&$\{2\}$ \\
6&$\Big(M_{0,4}\big.,$&$ \frac{M_{0,1}}2,$&$ \frac{M_{0,1}+M_{0,4}}{2},$&$ \left.\frac{M_{0,1}+M_{0,4}}{2}\right)$&	$\left(\frac{M_{0,1}-M_{0,4}}{2}\right.,$&$0  ,$&$\frac{M_{0,4}-M_{0,1}}{2},$&$0\Bigr)$&$\{1\}$ \\
7&$\left(\frac{M_{0,1}+M_{0,4}}{2}\right.,$&$ \frac{M_{0,4}}2,$&$ \frac{M_{0,1}+M_{0,4}}{2},$&$ \left.\frac{M_{0,1}+M_{0,4}}{2}\right)$&	$\Bigl(0,$&$\frac{M_{0,1}-M_{0,4}}{4},$&$\frac{M_{0,4}-M_{0,1}}{4},$&$0\Bigr)$&$\{3\}$ \\
8&$\left(\frac{M_{0,1}+M_{0,4}}{2}\right.,$&$ \frac{M_{0,4}}2,$&$ \frac{M_{0,1}+3M_{0,4}}{4},$&$ \left.\frac{M_{0,1}+M_{0,4}}{2}\right)$&	$\Bigl(0,$&$\frac{M_{0,1}-M_{0,4}}{4},$&$0,$&$\left.\frac{M_{0,4}-M_{0,1}}{4}\right)$&$\{4\}$ \\
9&$\left(\frac{M_{0,1}+M_{0,4}}{2}\right.,$&$ \frac{M_{0,4}}2,$&$ \frac{M_{0,1}+3M_{0,4}}{4},$&$ \left.\frac{M_{0,1}+3M_{0,4}}{4}\right)$&	$\left(\frac{M_{0,4}-M_{0,1}}{4}\right.,$&$\frac{M_{0,1}-M_{0,4}}{4},$&$0,$&$0\Bigr)$&$\{2\}$ \\
10&$\left(\frac{M_{0,1}+M_{0,4}}{2}\right.,$&$ \frac{M_{0,1}+M_{0,4}}{4},$&$ \frac{M_{0,1}+3M_{0,4}}{4},$&$ \left.\frac{M_{0,1}+3M_{0,4}}{4}\right)$&	$\left(\frac{M_{0,4}-M_{0,1}}{4}\right.,$&$0,$&$\frac{M_{0,1}-M_{0,4}}{4},$&$0\Bigr)$&$\{1\}$ \\
11&$\left(\frac{M_{0,1}+3M_{0,4}}{4}\right.,$&$ \frac{M_{0,1}+M_{0,4}}{4},$&$ \frac{M_{0,1}+3M_{0,4}}{4},$&$ \left.\frac{M_{0,1}+3M_{0,4}}{4}\right)$&	$\Bigl(0,$&$\frac{M_{0,4}-M_{0,1}}{8},$&$\frac{M_{0,1}-M_{0,4}}{8},$&$0\Bigr)$&$\{3\}$ \\
\hline
\end{tabular}%
}
}
}
}
\medskip

One can verify that if $M_{0,1}\ne 0$ or $M_{0,4}\ne 0$, in particular, if the initial cash is uniformly distributed, then $C_t$ tends to zero, $H_t\bf 1$ is lower bounded by a strictly positive value, and RLGL converges.


Consider now the following cycle: ${\bf G_4} = (\{1\}, \{2\}, \{4\}, \{3\})$. Then we have
\[P({\bf G_4}) = \left(\begin{array}{rrrr} 0&0&0&1\\0&0&0&1\\0&0&0&1\\1&0&0&0\end{array}\right).\]
This matrix is periodic after one iteration, and \eqref{eq:markov-cycle} fails. The next table shows  that RLGL may still converge for certain values of $M_0$:
\medskip

{\centerline{
{\small
\setlength{\tabcolsep}{3pt}
\begin{tabular}{|c|c c c c|c c c c|c|}
\hline
$t$&&$H_t$&&&&$C_t$&&&$G_t$\\
\hline
1&$(M_{0,1},$&$ M_{0,2},$&$ M_{0,3},$&$ M_{0,4})$&$(M_{0,4}-M_{0,1},$&$ 0.5 M_{0,1}-M_{0,2},$&$ 0.5 M_{0,1}+M_{0,2}-M_{0,3},$&$ M_{0,3}-M_{0,4})$&$\{1\}$\\
2&$(M_{0,4},$&$ M_{0,2},$&$ M_{0,3},$&$ M_{0,4})$&$(0,$&$0.5M_{0,4}-M_{0,2},$&$M_{0,2}+0.5M_{0,4}-M_{0,3},$&$M_{0,3}-M_{0,4})$&$\{2\}$\\
3&$(M_{0,4},$&$ 0.5M_{0,4},$&$ M_{0,3},$&$ M_{0,4})$&$(0,$&$0,$&$M_{0,4}-M_{0,3},$&$M_{0,3}-M_{0,4})$&$\{4\}$\\
4&$(M_{0,4},$&$ 0.5M_{0,4},$&$ M_{0,3},$&$ M_{0,3})$&$(M_{0,3}-M_{0,4},$&$0,$&$M_{0,4}-M_{0,3},$&$0)$&$\{3\}$ \\
5&$(M_{0,4},$&$ 0.5M_{0,4},$&$ M_{0,4},$&$ M_{0,3})$&$(M_{0,3}-M_{0,4},$&$0,$&$0,$&$M_{0,4}-M_{0,3})$&$\{1\}$ \\
6&$(M_{0,3},$&$ 0.5M_{0,4},$&$ M_{0,4},$&$ M_{0,3})$&$(0,$&$0.5M_{0,3}-0.5M_{0,4},$&$0.5M_{0,3}-0.5M_{0,4},$&$M_{0,4}-M_{0,3})$&$\{2\}$ \\
7&$(M_{0,3},$&$ 0.5M_{0,3},$&$ M_{0,4},$&$ M_{0,3})$&$(0,$&$0,$&$M_{0,3}-M_{0,4},$&$M_{0,4}-M_{0,3})$&$\{4\}$ \\
\hline
\end{tabular}
}
}
}

 We see that the expressions for $H_t$ and $C_t$ at $t=3$ and $t=7$ are exactly symmetrical with respect to $M_{0,3}$ and $M_{0,4}$, so these steps will repeat indefinitely. If $M_{0,3}=M_{0,4}=0$, then we have $H_t=C_t={\bf 0}$ at $t=3$ just as in Example~\ref{ex:counterexample}, so the algorithm finds the trivial solution $H_t={\bf 0}$, and $\hat \pi_t$ is undefined. If $M_{0,3} = M_{0,4}>0$ then at $t=3$ the algorithm converges to the correct solution $\hat \pi_3=\pi^*$. In all other cases the algorithm does not converge. In terms of the coupling $(\hat \bX, \hat \bX')$, with positive probability, $\hat \bX$ and $\hat \bX'$ will never meet.

\end{example}

\subsection{Random $\bG$}
\label{ssec:geometric}

Assume here that $P$ is irreducible and aperiodic, and green light is given to only one state, randomly chosen from $[N]$ at each time step, independently of anything else. This is the basic random scheduling technique that immediately leads to fully distributed computations. The next theorem states exponential convergence of the RLGL algorithm in this case.

\begin{theorem}
\label{th:coupling_random}  Let  $G_t=\{U_t\}$, where $U_1,U_2,\ldots$ are independent random variables with uniform distribution on $\{1,2,\ldots,N\}$, $N>2$. Assume that $P$ is irreducible and aperiodic, and $\eta\in(0,1)$ $r\ge 1$ are such that
\begin{equation}
\label{eq:recurrent}
(P^{r'})_{ij}>\eta \quad \mbox{for all $r'>r$ and $i,j\in [N]$.}
\end{equation}
Then for any $f(t)> 0$ such that $\lim_{t\to\infty} f(t) =\infty$ it holds that
\[
\P\left(||C_t||_1\le f(t)  e^{-a t}\right) = 1-o(1), \quad  \mbox {as $t\to\infty$},
\]
where
\begin{align*}
a &= \sup_{\beta\in \left(0,(Nr)^{-1}\right)} \min\{\beta\,|\log\left(1-\tilde\eta\right)|, \beta J(\beta)\},\\
\tilde\eta &= \left(\frac{1}{2}\right)^{r-1}\eta,\\
J(\beta) &= \beta^{-1}(1- 2r\beta)\,\log\left(\frac{N(1-2r\beta)}{N-2}\right) + 2r \log\left({Nr\beta}\right).
\end{align*}
If, in addition, $\inf_{t>0} |H_t{\bf 1}| >0$, then $||\hat \pi_t-\pi^*||_1=O_{\P}\left(f(t)e^{-at}\right)$ as $t\to\infty$.
\end{theorem}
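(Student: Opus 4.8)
The plan is to apply Theorem~\ref{th:T} with the random sequence $\bT$ arising naturally from the coupling, and then verify the exponential-convergence condition \eqref{eq:exp_convergence_P} for $\E(\delta^{\nu(t)})$ via a large-deviation estimate. The first step is to set up the coupling $(\hat\bX,\hat\bX')$ as in Definition~\ref{def:XX'} and exploit \eqref{eq:coupling_inequality}. Because green light is given to a single uniformly random node at each step, a given state $i$ receives green light in any fixed window of length $Nr$ with controllable probability; I would define $T_n$ to be the times at which \emph{both} chains $\hat\bX$ and $\hat\bX'$ have received enough green lights to complete a full ``$r$-fold'' contraction. Concretely, a successful contraction occurs once both particles have been activated enough times that the hypothesis \eqref{eq:recurrent} forces their conditional meeting probability to be at least $\tilde\eta=(1/2)^{r-1}\eta$. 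The factor $(1/2)^{r-1}$ should come from splitting the probability that, over $r$ activations of each chain, a maximal coupling of the two transition steps succeeds; this mirrors the single-node coupling in the proof of Theorem~\ref{th:coupling1}, but now the activation times are random rather than cyclic.

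Next I would control the random number $\nu(t)$ of contractions on $(0,t]$. Since each contraction requires waiting for a bounded number of single-node green lights distributed uniformly, the inter-contraction gaps $T_{n+1}-T_n$ are stochastically dominated by sums of geometric-type waiting times with mean of order $Nr$. This is exactly where the parameter $\beta\in(0,(Nr)^{-1})$ enters: $\beta t$ is the target number of contractions by time $t$, and I must show that the event $\{\nu(t)<\beta t\}$ is exponentially unlikely. The rate function $J(\beta)$ is the Cramér/large-deviation rate for the event that fewer than $\beta t$ of the required green-light patterns have appeared among $t$ uniform draws on $[N]$; the explicit form $J(\beta)=\beta^{-1}(1-2r\beta)\log\!\big(\tfrac{N(1-2r\beta)}{N-2}\big)+2r\log(Nr\beta)$ should emerge from a Chernoff bound on a binomial/multinomial count of the $2r$ needed activations (two chains, $r$ activations each), with the $N-2$ reflecting that after fixing the two active states the remaining $N-2$ draws are ``wasted.'' Then on the high-probability event $\{\nu(t)\ge \beta t\}$ we get $\delta^{\nu(t)}=(1-\tilde\eta)^{\nu(t)}\le (1-\tilde\eta)^{\beta t}=e^{-\beta|\log(1-\tilde\eta)|\,t}$, while the complementary event contributes at rate $e^{-\beta J(\beta) t}$; taking the worse of the two exponents and optimizing over $\beta$ yields $a=\sup_\beta\min\{\beta|\log(1-\tilde\eta)|,\beta J(\beta)\}$.

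Having established $\E(||C_t||_1)\le a'e^{-at}$ (equivalently \eqref{eq:exp_convergence_P} with $\rho=e^{-a}$), the conclusion follows verbatim from Lemma~\ref{th:general}: Markov's inequality gives $\P(||C_t||_1\le f(t)e^{-at})=1-o(1)$ for any $f(t)\to\infty$, and the additional hypothesis $\inf_{t>0}|H_t\mathbf{1}|>0$ upgrades this to $||\hat\pi_t-\pi^*||_1=O_{\P}(f(t)e^{-at})$ through \eqref{eq:convergence}. I would invoke Theorem~\ref{th:T} to pass from the per-contraction bound \eqref{eq:alpha} to the bound \eqref{eq:nu_bound} on $\E(||C_t||_1)$, and only then apply Lemma~\ref{th:general}.

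The main obstacle is the large-deviation step, specifically making the coupling argument and the counting argument fit together cleanly. The subtlety is that the contraction events depend on \emph{which} states are currently occupied by the two particles, so the waiting time for the next contraction is not an i.i.d.\ sequence in an obvious way; I expect to need a stochastic-domination argument showing that, regardless of the current positions $i,i'$, the probability of completing a contraction within the next $Nr$ steps is bounded below by a constant depending only on $\eta$ and $r$. This lets me dominate $\nu(t)$ from below by a genuine renewal (or binomial) count to which Chernoff's bound applies, and it is the careful bookkeeping of the two chains' independent activations, together with the maximal-coupling success probability $\tilde\eta$, that produces both $\tilde\eta$ and the explicit $J(\beta)$. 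Verifying \eqref{eq:recurrent} is itself automatic: irreducibility and aperiodicity of $P$ guarantee that $P^{r'}$ is strictly positive for all large $r'$, so suitable $\eta$ and $r$ exist.
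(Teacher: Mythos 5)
Your outline is essentially the paper's proof: couple $\bX$ and $\bX'$ as independent chains until they meet, build a random renewal sequence $\bT$ whose increments $\Delta_n$ are sums of $2r$ i.i.d.\ geometric$(2/N)$ waiting times (the time until one of the two occupied states draws the green light), feed the per-renewal contraction $\tilde\eta$ into Theorem~\ref{th:T}, split $\E\bigl((1-\tilde\eta)^{\nu(t)}\bigr)$ on the event $\{\nu(t)\ge\beta t\}$, control the complement by Cram\'er's bound for $T_{\lfloor\beta t\rfloor}$ (which is exactly where $\beta<(Nr)^{-1}$ and the explicit $J(\beta)$ come from, via the MGF of the geometric with failure probability $(N-2)/N$), optimize over $\beta$, and finish with Lemma~\ref{th:general}. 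Your worry about the inter-contraction times not being i.i.d.\ is resolved exactly as you anticipate: the waiting times depend only on the fact that two distinct states are occupied, so they are genuinely i.i.d.\ geometric$(2/N)$, and \eqref{eq:recurrent} holds uniformly in $(i,i')$.

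The one step where your mechanism does not match, and as described would not yield the stated constant, is the derivation of $\tilde\eta=(1/2)^{r-1}\eta$. You attribute it to ``$r$ activations of each chain'' followed by a successful maximal coupling of the two transition steps. But the event that each chain is activated exactly $r$ times among the $2r$ draws has probability $\binom{2r}{r}2^{-2r}$, not $(1/2)^{r-1}$; and if both chains move $r$ times, the independent-chains meeting probability is of the form $\sum_j (P^r)_{ij}(P^r)_{i'j}$, which condition \eqref{eq:recurrent} bounds below only by something like $\eta^2$ (this is the Doeblin-type argument of Theorem~\ref{th:coupling2}, not of this theorem). The paper instead observes that with probability $2\cdot(1/2)^{r}=(1/2)^{r-1}$ the \emph{last} $r$ of the $2r$ activations all fall on the same chain; during those $r$ steps the other chain is frozen at a fixed state $j$, so \eqref{eq:recurrent} applies directly to the single moving chain hitting the fixed target $j$ and gives the factor $\eta$ to the first power. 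You would need to replace your ``both chains activated $r$ times each, then maximally couple'' step by this ``one chain frozen while the other makes $r$ moves'' event to recover the constant in the statement; the rest of your argument then goes through unchanged.
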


\begin{proof} The theorem is proved in two steps. First, we find a sequence $\bT$ that satisfies \eqref{eq:alpha} with $\delta=1-\tilde\eta$ and apply Theorem~\ref{th:T}. Second, we derive a lower bound for the convergence rate of $\E(\left(1-\tilde\eta\right)^{\nu(t)})$.

As before, we use the coupling technique. Let $\bX$ and $\bX'$ be as in Definition~\ref{def:XX'}. Without loss of generality, assume that, conditioned on $\bG$, $\bX$ and $\bX'$ are independent, and denote by $t_0>0$ the first time instant when they meet. Set $\hat X_t= X_t$, $\hat X'_t= X'_t$ for $t\le t_0$  and $\hat X_t= \hat X'_t=X_t$ for $t>t_0$. Again, by \eqref{eq:C_tv0} and \eqref{eq:coupling_inequality}, it is sufficient to construct $\bT$ such that $\hat \bX$ and $\hat \bX'$ meet between $T_{n-1}$ and $T_n$ with positive probability. Since $P$ is irreducible and aperiodic, there exist $\eta\in (0,1)$ and $r\ge 1$ that satisfy \eqref{eq:recurrent}. Define $\bT$ as follows:
\begin{equation}\label{eq:Delta}T_0=1,\quad T_n=T_{n-1}+\Delta_n, \quad n\ge 1,\end{equation}
where $\Delta_n=\xi_1+\xi_2+\cdots\xi_{2r}$, and $\xi_1,\xi_2,\ldots,\xi_{2r}$ are independent geometric random variables with parameter $2/N$. As long as $\hbX$ or $\hbX'$ have not met, set $\xi_i$ to be the time until either $\hbX$ or $\hbX'$ make a transition; this time indeed has geometric distribution with parameter $2/N$. Then, by the time $T_n$ two cases are possible: 1) $\hbX$ and $\hbX'$ meet before $T_n$,  or 2) by the time $T_n$, $\hbX$ and $\hbX'$ together make $2r$ transitions. In the latter case, with probability $2\cdot (1/2)^r= (1/2)^{r-1}$ the last $r$ transitions belong to the same process, say, $\hbX$, without loss of generality. In that case, during these last $r$ transitions, $\hbX'$ remains at some state $j\in[N]$.  Furthermore, by \eqref{eq:recurrent}, $\hbX$ will reach $j$ by the time $T_n$ with probability at least $\eta$. Hence, we write:
\begin{align}
\nonumber \P&(\hX_{T_n}  \ne \hX'_{T_n}|\hX_{T_{n-1}}  \ne \hX'_{T_{n-1}}, \bT)\\
&= 1- \sum_{j\in[N]}\P\left(\hX_{T_n} = \hX'_{T_n}=j|\hat X_{T_{n-1}}  \ne \hat X'_{T_{n-1}},\bT\right)\le  1-\frac{1}{2^{r-1}}\eta = 1-\tilde\eta.
\label{eq:rsteps}
\end{align}
Then, by \eqref{eq:rsteps} and \eqref{eq:coupling_inequality}, we get
\[
\E(d_{TV}(X_{T_n},X'_{T_n})|\bT)\le \P(\hX_{T_n}  \ne \hX'_{T_n}|\bT)\le (1-\tilde\eta)^n, n\ge 1,
\]
and Theorem~\ref{th:T} gives
\begin{equation}
\label{eq:randomC}
\E(||C_t||_1)\le 2\E((1-\tilde\eta)^{\nu(t)}),\end{equation}
where $\nu(t)=\sup_{n\ge 0}\{n: T_n\le t\}$, $t\ge 0$.

It remains to evaluate \[\E((1-\tilde\eta)^{\nu(t)})=\E(e^{\log(1-\tilde\eta)\,{\nu(t)}})=\E(e^{-\alpha \nu(t)}),\] where $\alpha=-\log(1-\tilde\eta)$.  For any $\beta>0$, we obtain
\begin{align}
\label{eq:ld1}
\E(e^{-\alpha \nu(t)}) & \le \E(e^{-\alpha \beta t}{\bf 1}\{\nu(t) \ge \beta t\}) + \E({\bf 1}\{\nu(t) < \beta t\})\le e^{-\alpha \beta t} + \P(\nu(t)<\beta t).
\end{align}
Using the fact that $\nu(t)$ is a discrete-time renewal process with inter-arrival times distributed as $\Delta_n$, we can apply the large deviation results (see e.g. \cite{Lefevere2011large_deviations}) to evaluate the last term in \eqref{eq:ld1}. We present the derivation below for completeness. First, we write:
\begin{align}
\nonumber
\P(\nu(t)< \beta t)& =  \P(\nu(t) \le \lfloor \beta t \rfloor) = \P(T_{\lfloor \beta t \rfloor} \ge t) \\
\label{eq:ld2}
&= \P\left(T_{\lfloor \beta t \rfloor} \ge \frac{t}{\lfloor \beta t \rfloor}\, \lfloor \beta t \rfloor\right) \le P\left(T_{\lfloor \beta t \rfloor} \ge \beta^{-1}\, \lfloor \beta t \rfloor\right).
\end{align}
Recall that $\Delta_n$ is a sum of $2r$ independent geometrically distributed random variables with parameter $2/N$, so $\Delta_n$ has finite exponential moments and  mean $Nr$. Therefore, by the Cram\'er's bound (see e.g. Theorem~2.19 \cite{vdHofstad}), we have that the last expression in \eqref{eq:ld1} decreases exponentially in $\lfloor \beta t \rfloor$ when ${\beta}^{-1} > Nr$. Specifically, for any $\beta \in (0,(Nr)^{-1})$ we get
\begin{align}
\label{eq:ld3}
\P\left(T_{\lfloor \beta t \rfloor} \ge {\beta}^{-1}\, \lfloor \beta t \rfloor\right)\le  e^{- J(\beta) \lfloor \beta t \rfloor} \le e^{J(\beta)} e^{- \beta J(\beta)\,t},
\end{align}
where
\begin{align*}
J(\beta) &= \sup_{s\ge 0} \left\{ \beta^{-1}\,s - \log\E\left(e^{s\Delta_1}\right)\right\} = \beta^{-1}(1- 2r\beta)\,\log\left(\frac{N(1-2r\beta)}{N-2}\right) + 2r \log\left({Nr\beta}\right).
\end{align*}
From \eqref{eq:ld1} -- \eqref{eq:ld3} we have that
\[ \E(e^{-\alpha \nu(t)})  \le  e^{-\beta \alpha\, t} + e^{J(\beta)} e^{- \beta J(\beta)\,t}\le \left(1+ e^{J(\beta)}\right)e^{-\beta \min\{\alpha,J(\beta)\}\,t}.\]
We now choose $\beta\in(0, (Nr)^{-1})$ to maximize the negative exponent in the last expression. It is easy to check that $\beta J(\beta)>0$ for $\beta\in(0, (Nr)^{-1})$, so that we get \[a=\sup_{\beta\in(0, (Nr)^{-1})}\beta\min\{\alpha, J(\beta)\}>0.\]
The result now follows form (\ref{eq:randomC}) and Lemma~\ref{th:general}.
\end{proof}

\begin{remark} Note that our estimate $a$ of the convergence rate is smaller than intuitive value $\log(1-\eta)Nr$, There are two reasons for this. First, even if the processes $\bX$ is guaranteed to make at least $r$ transitions and reach any state with probability at least $\eta$, it may occur (as in Example~\ref{ex:counterexample2}) that $\bX'$ cannot be at the same state due to the realized sequence of green lights. Therefore, we must take into account the probability that a sequence of green lights allows $\bX'$ and $\bX$ to be in the same state. In the proof we used  a very crude lower bound for this probability, $(1/2)^{r-1}$. Nevertheless, even more accurate bounds must take into account `unfortunate' sequences of green lights, which will lower the estimation of the convergence rate. Second, the exponential convergence of  \eqref{eq:ld1}  requires that $\beta<(Nr)^{-1}$. This loss in convergence rate follows from the large deviations results, and arises from the variability of the random scheduling.
\end{remark}

Let us compare the performance  of RLGL with random $\bG$ with the method of power iteration.
Using Jensen's inequality, we can write
\begin{align*}
   \E(||C_t||)\ge ||\E(C_t)|| = \left|\left|(\pi_1-\pi_1')\prod_{k=1}^{t-1}\E(P(G_k))\right|\right|= \left|\left|(\pi_1-\pi_1')\Tilde{P}^{t-1}\right|\right|,
\end{align*}
where $\Tilde{P}=\frac{1}{N}\sum_{i=1}^N P({i})=(1-\frac{1}{N})I + \frac{1}{N} P$. We have the
following simple relation between the eigenvalues of $\tilde{P}$ and $P$:
$$
\lambda(\tilde{P})=1-\frac{1}{N}(1-\lambda(P)).
$$
It is natural to compare one power iteration with $N$ updates of RLGL with random $\bG$.
We have the following inequality
$$
\lambda \le \left(1-\frac{1}{N}(1-\lambda)\right)^N.
$$

{
This indicates that the performance of RLGL with random $\bG$ cannot be generically better than that of power iterations. }

We conclude this section with a remark about ordinal ranking based on approximate and exact stationary probability
distributions. In some applications such as web ranking the ordering given by the stationary probability distribution is more
important than the probabilities themselves. The relation between the numerical precision and the ordinal ranking is a difficult
topic with many non intuitive counterexamples \cite{Wills2007rank,Ipsen2009}. The exponential convergence has an important implication for ordinal ranking. Suppose
$$||\hat{\pi}_t-\pi^*||_1\le \alpha||\hat{\pi}_{t-1}-\pi^*||_1,\quad \alpha<1,$$
which corresponds to the cases described in Theorems~\ref{th:coupling1}~and~\ref{th:coupling2}. We can write
$$||\hat{\pi}_t-\pi^*||_1\le \alpha||\hat{\pi}_{t-1}-\hat{\pi}_{t}||_1+ \alpha||\hat{\pi}_{t}-\pi^*||_1,$$
and hence
$$||\hat{\pi}_t-\pi^*||_1\le \frac{\alpha}{1-\alpha}||\hat{\pi}_{t-1}-\hat{\pi}_{t}||_1.$$
Then, using the above inequality and Theorem 3.1 from \cite{Ipsen2009}, we draw a conclusion on the  ranking  between elements $i$ and $j$ using the residual at time step $t$. Specifically, from
$$\hat{\pi}_{t,i} > \hat{\pi}_{t,j} + \frac{\alpha}{1-\alpha}||\hat{\pi}_{t-1}-\hat{\pi}_{t}||_1$$
we are able to conclude that $ \pi^*_i>\pi^*_j.$

Summarizing the results of Sections~\ref{sec:dobrushin} -- \ref{ssec:geometric}, we conclude that RLGL algorithm converges exponentially in several natural scenarios, when the scheduling of $\bG$ makes no use of the structure $P$ or the cash distribution. We could evaluate the convergence rate more precisely by using a stronger coupling and making the $T_n$'s as small as possible. Yet, with such simple scheduling of green light we cannot expect a great improvement of convergence rate compared to the power iterations. The real potential of the RLGL algorithm is in intelligent  control of $\bG$ for faster convergence. We will demonstrate this below in Sections~\ref{sec:two_blocks}~and~\ref{sec:three_blocks}.

\section{Convergence faster than power iterations}
\label{sec:two_blocks}

In Section~\ref{sec:exp_convergence} the scheduling for green light was agnostic to the amount of cash in nodes. It is natural to consider a broader class of scheduling strategies, which do take the cash values into account for more efficient depletion. In this section we will start with a very simple model, i.e., the mean-field stochastic block model, and consider a schedule that always gives green light to one of the blocks. We will show that this results in faster convergence than that of power iterations.

\subsection{The mean-field model}


The Stochastic Block Model (SBM) \cite{SBM_first_ref_1,SBM_first_ref_2} is the simplest random graph
model with clustered structure. In its basic form, SBM has two clusters $B_1$ and $B_2$ with sizes $N_1$
and $N_2$. The nodes inside a cluster are connected with probability $p$ and the nodes across
different clusters are connected with probability $q$. In the following we shall consider the
Mean-Field SBM in which we replace the original graph by a weighted graph where
the edge weight corresponds to the probability of an edge. Furthermore, we will use the variable $n$ to parametrize the sizes of the clusters. Specifically, set $N_2=n$ and $N_1=Kn$, with $K \ge 1$. (We understand that $Kn$ stands for $\lfloor Kn \rfloor$ but write $Kn$ for brevity.)
Thus, the adjacency and transition probability matrices are, respectively,
\begin{align*}
A &= \left[\begin{array}{cc}
p J_{Kn,Kn} & q J_{Kn,n}\\
q J_{n,Kn} & p J_{n,n}
\end{array}\right],\\
P &= \left[\begin{array}{cc}
\frac{p}{pKn+qn} J_{Kn,Kn} & \frac{q}{pKn+qn} J_{Kn,n}\\
\frac{q}{qKn+pn} J_{n,Kn} & \frac{p}{qKn+pn} J_{n,n}
\end{array}\right],
\end{align*}
where $J_{a,b}$ is the matrix of ones with size $a \times b$.

It is easy to see that the second largest eigenvalue
of $P$ equals to
\[
\lambda_2 = \frac{(p^2-q^2)K}{(pK+q)(qK+p)}.
\]
It is well known that the convergence of power iterations is governed by the powers of $\lambda_2$.

Recall from \eqref{eq:Cpi-pi'} that
\[
C_t = \pi'_t - \pi_t,
\]
where $\pi_t = M_0 \prod_{\tau=1}^{t-1}P(G_\tau)$, $\pi'_t = M_0 P \prod_{\tau=1}^{t-1}P(G_\tau)$.
As before, we are interested in the rate of convergence of $||C_t||_1$ to zero as $t\to\infty$.

\subsection{Computation costs of one-block policies}
\label{ssec:two-block-costs}

Let us consider a policy when the green light is given to all nodes in one bock, i.e., $G_t=B_1$ or $G_t=B_2$ for all $t\ge 1$. Then all nodes in the same block are identical, and so we have
\begin{align}
\label{eq:pt}
\pi_t &= [p_{t,1}\onevec_{nK}^T \ p_{t,2}\onevec_{n}^T],\\
\label{eq:pt'}
\pi'_t &= [p'_{t,1}\onevec_{nK}^T \ p'_{t,2}\onevec_{n}^T],
\end{align}
where $\onevec_{a}$ is a column vector of ones of length $a$. We will now investigate the convergence rate and the computational costs of such policies.

If we move the cash from the first block, i.e., $G_t=B_1$, then we can write  $\pi_{t+1}$ and $\pi'_{t+1}$ as follows:
\begin{align}
\label{eq:b1pt}
\pi_{t+1} & =  \left[\left(\frac{p K}{pK + q}\ p_{t,1}\right) \onevec_{Kn} \quad \left(\frac{q K}{pK + q} \ p_{t,1} + p_{t,2}\right)\onevec_n\right], \\
\label{eq:b1pt'}
\pi'_{t+1} & =  \left[\left(\frac{p K}{pK + q}\ p'_{t,1}\right) \onevec_{Kn} \quad \left(\frac{q K}{pK + q} \ p'_{t,1} + p'_{t,2}\right)\onevec_n\right].
\end{align}

Since the total cash equals zero,
\[
Kn p_{t,1} + n p_{t,2} - Kn p'_{t,1} - n p'_{t,2} = 0,
\]
we obtain an explicit relation between the
cash of $B_1$ and the cash of $B_2$:
\begin{equation}
\label{eq:P1P2relat}
K (p_{t,1}-p'_{t,1}) = p'_{t,2} - p_{t,2}.
\end{equation}
Therefore, if $G_t=B_1$, using \eqref{eq:b1pt}, \eqref{eq:b1pt'}, \eqref{eq:P1P2relat}, yields:
\begin{align*}
||C_{t+1}||_1&= Kn |p_{t+1,1}-p'_{t+1,1}| + n |p_{t+1,2} - p'_{t+1,2}|\\
&= Kn \frac{p K}{pK + q} |p_{t,1}-p'_{t,1}|
+ n \left|\frac{q K}{pK + q}(p_{t,1}-p'_{t,1}) + (p_{t,2}-p'_{t,2})\right|\\
&=
Kn \frac{p K}{pK + q} |p_{t,1}-p'_{t,1}|
+ n \left|\frac{q K}{pK + q}(p_{t,1}-p'_{t,1}) - K(p_{t,1}-p'_{t,1})\right|\\
&
= Kn \frac{p K}{pK + q} |p_{t,1}-p'_{t,1}| +
Kn \left(1-\frac{q }{pK + q}\right) |p_{t,1}-p'_{t,1}|\\
&= 2 Kn \frac{pK}{pK + q} |p_{t,1}-p'_{t,1}|.
\end{align*}
Now, again using (\ref{eq:P1P2relat}), we derive that
$$
||C_{t}||_1 = Kn |p_{t,1}-p'_{t,1}| + n |p_{t,2}-p'_{t,2}|
= 2 Kn |p_{t,1}-p'_{t,1}|
$$
to finally obtain
\begin{equation}
\label{eq:CG1}
||C_{t+1}||_1  = \frac{pK}{pK + q} ||C_{t}||_1, \quad \mbox{if $G_t= B_1$}.
\end{equation}

Similarly, if we give green light to the second block, $G_t=B_2$, then:
\begin{align*}
\pi_{t+1} & = \left[\left(\frac{q}{qK + p}\ p_{t,2} + p_{t,1}\right)\onevec_{Kn}\; \left(\frac{p}{qK + p}\ p_{t,2}\right)\onevec_{n}\right] \\
\pi'_{t+1} & = \left[\left(\frac{q}{qK + p}\ p'_{t,2} + p'_{t,1}\right)\onevec_{Kn}\; \left(\frac{p}{qK + p}\ p'_{t,2}\right)\onevec_{n}\right].
\end{align*}
Again using the relation (\ref{eq:P1P2relat}), we obtain
\begin{equation}
\label{eq:CG2}
||C_{t+1}||_1  = \frac{p}{qK + p} ||C_{t}||_1, \quad \mbox{if $G_t= B_2$}.
\end{equation}
When $K>1$, the factor in (\ref{eq:CG2}) is smaller than the one in (\ref{eq:CG1}).
We conclude that under any schedule that gives green light to one block at a time,
$||C_{t}||_1$ converges exponentially to zero, and the convergence rate is the highest
if $G_t=B_2$ for all $t\ge 1$.

Next, let us compare the computational costs of RLGL with $G_t=B_2$ and PI assuming that each of the two algorithms stops after reaching
\[
\frac{1}{2}||C_t||_1 = d_{TV}(\pi_t,\pi'_t) \le \varepsilon.
\]
We set the cost of each operation with block $B$ to be equal to the volume of this block
${\rm vol}(B)=\sum_{i \in B}d_i$. We will be especially interested in large graphs, $n\to\infty$, in the regime when $K:=K(n)\to\infty$ and $q:=q(n)$, $p:=p(n)$ are such that $qK/p \to 0$ as $n\to\infty$. This corresponds to the interesting case when the system is nearly completely decomposable and the sizes of the blocks are unbalanced, which is hard to handle by power iterations \cite{stewart1991numerical}.
For other regimes, similar analysis can be easily performed, and we will leave it out for brevity.

If $G_t=B_2$ for all $t\ge 1$, then the cost of one step of RLGL is $(pn+qKn)n$, and we can estimate the total cost of the RLGL method as follows:
\begin{align*}
{\rm Cost(RLGL)} &= \left(\log_\frac{p}{qK+p} \varepsilon\right) \ (pn+qKn)n\\
&= \frac{\log(1/\varepsilon)}{\log(1+\frac{qK}{p})} \ pn^2 \left(1+\frac{qK}{p}\right)
= \log(1/\epsilon) \frac{p^2n^2}{qK}(1+o(1))\quad \mbox{as $n\to\infty$}.
\end{align*}

In order to evaluate the costs of PI, we first write:
\[
\lambda_2 =  \frac{(p^2-q^2)K}{(pK+q)(qK+p)} = 1 - \frac{pqK^2 + 2q^2K+pq}{p^2K+pqK^2 + q^2K +pq}
= 1 - \frac{qK}{p}\left(1+o(1)\right).
\]
Since $\lambda_2<\frac{p}{qK + p}=1-\frac{qK}{p+qk}$, each step $t$ of PI results in more cash being depleted than a step of RLGL with $G_t=B_2$. This is logical because PI is equivalent to $G_t=B_1\cup B_2$. However, one step of PI also involves higher computational costs:
\begin{align*}
{\rm Cost(PI)} &= \log_{\lambda_2} \varepsilon \ [(pn+qKn)n + (pKn+qn)Kn]\\
&= \frac{\log(\epsilon)}{\log(\lambda_2)}\ pn^2K^2\ (1+o(1))= \log(1/\epsilon) \, \frac{p^2n^2}{qK} K^2 \ (1+o(1)).
\end{align*}
Thus, in the asymptotic regime under consideration, the cost of the PI method is $K^2$ times larger than the cost of the RLGL method.

Interestingly, if green light is given to $B_1$ instead of $B_2$ at any time $t$, then not only the convergence rate of $||C_t||_1$ is lower by \eqref{eq:CG1} and \eqref{eq:CG2}, but also the computational costs of each iteration are higher due to the higher volume of $B_1$. We conclude that in the presented asymptotic regime, among the scheduling strategies that give either red or green light to an entire block, the schedule $G_t=B_2$, $t\ge 1$, has the lowest computational cost.

\subsection{Greedy green-light scheduling}

In this section we will investigate whether it could be optimal to give green light to a fraction of nodes in a block.
Assume that at the beginning of step $t$ all nodes in the same block are again identical, so that $\pi_t$, $\pi'_t$ are given by \eqref{eq:pt}, \eqref{eq:pt'}.
Consider a schedule that at time $t$ gives green light to  the first $\alpha_1Kn$ nodes of $B_1$ and  the first $\alpha_2 n$ nodes of $B_2$ for some $\alpha_1,\alpha_2\in [0,1]$. Then by similar calculations as in Section~\ref{ssec:two-block-costs} we have

\begin{align*}
\pi_{t+1}  = &\left[\left(\frac{\alpha_1 pK}{pK + q}\ p_{t,1} + \frac{\alpha_2q}{qK + p} \ p_{t,2}\right) \onevec_{\alpha_1 Kn} \;
\left(p_{t,1}+\frac{\alpha_1 pK}{pK + q}\ p_{t,1} + \frac{\alpha_2q}{qK + p} \ p_{t,2}\right) \onevec_{(1-\alpha_1) Kn}\right.\\
&\left.\left(\frac{\alpha_1 qK}{pK + q}\ p_{t,1} + \frac{\alpha_2 p}{qK + p} \ p_{t,2} \right) \onevec_{\alpha_2 n} \;
\left(p_{t,2}+\frac{\alpha_1 qK}{pK + q}\ p_{t,1} + \frac{\alpha_2 p}{qK + p}\ p_{t,2}\right) \onevec_{(1-\alpha_2) n}\right],
\end{align*}
and an identical expression holds for $\pi'_{t+1}$. Invoking (\ref{eq:P1P2relat}), we get
\begin{align*}
||C_{t+1}||_1 &= \alpha_1Kn\left|\frac{\alpha_1 pK}{pK+q} - \frac{\alpha_2q K }{qK + p}\right|\,|p_{t,1}-p'_{t,1}|\\
& + (1-\alpha_1) Kn \left(1+ \frac{\alpha_1 pK}{pK+q} - \frac{\alpha_2qK}{qK + p}\right) |p_{t,1} - p'_{t,1}|\\
&+  \alpha_2 K n \left|\frac{\alpha_1 q}{pK+q}-\frac{\alpha_2 p}{qK + p}  \right| |p_{t,1} - p'_{t,1}|\\
&+ (1-\alpha_2) Kn \left(1 - \frac{\alpha_1 q}{pK+q} + \frac{\alpha_2 p }{qK + p} \right) |p_{t,1} - p'_{t,1}|\\
&:=Kn \ |p_{t,1} - p'_{t,1}|\  A(\alpha_1,\alpha_2) = \frac{A(\alpha_1,\alpha_2)}{2} \ ||C_{t}||_1.
\end{align*}
Consider again our regime of interest when $K:=K(n)\to \infty$ and $Kq/p: = K(n)q(n)/p(n)\to 0$ as $n\to\infty$. Looking at the terms with the absolute values, we notice that for any $\alpha_1,\alpha_2>0$ we have
\[\lim_{n\to\infty}\frac{\alpha_1 pK}{pK+q} = \alpha_1,\quad \lim_{n\to\infty}\frac{\alpha_2q K }{qK + p} =0,\]
and
\[\lim_{n\to\infty}\frac{\alpha_1 q}{pK+q} =0, \quad \lim_{n\to\infty}\frac{\alpha_2 p}{qK + p} =\alpha_2.\]
Therefore, for large enough $n$ we obtain
\begin{align*}
A(\alpha_1,\alpha_2) &=
\alpha_1\left(\frac{\alpha_1 pK}{pK+q} - \frac{\alpha_2q K }{qK + p}\right) + (1-\alpha_1) \left(1+ \frac{\alpha_1 pK}{pK+q} - \frac{\alpha_2qK}{qK + p}\right)\\
&+  \alpha_2 \left(\frac{\alpha_2 p}{qK + p} - \frac{\alpha_1 q}{pK+q} \right)+ (1-\alpha_2) \left(1 - \frac{\alpha_1 q}{pK+q} + \frac{\alpha_2 p }{qK + p} \right)\\
& = 2 - \alpha_1 - \alpha_2 + \frac{\alpha_1 pK}{pK+q} - \frac{\alpha_2qK}{qK + p} - \frac{\alpha_1 q}{pK+q} + \frac{\alpha_2 p }{qK + p}\\
&= 2 - \frac{2\alpha_2qK}{qK + p} - \frac{2\alpha_1 q}{pK+q}.\\
\end{align*}
Then the costs of achieving $\frac{1}{2}||C_1||_t = d_{TV}(\pi_t,\pi'_t) \le \varepsilon$ become
\begin{align*}
{\rm Cost}(\alpha_1,\alpha_2)=\frac{\log(1/\varepsilon) (\alpha_1Kn+\alpha_2n)}{\log\left(\left(1 - \frac{\alpha_2qK}{qK + p} - \frac{\alpha_1 q}{pK+q}\right)^{-1}\right)}= \frac{n \log(1/\varepsilon) (\alpha_1K+\alpha_2)}{\frac{\alpha_2qK}{qK + p}\ (1+o(1))}.
\end{align*}
First, note that the main term above is increasing in $\alpha_1$. If $\alpha_1>0$, which is suboptimal, then the minimum is achieved with  $\alpha_2=1$. However, interestingly, if $\alpha_1=0$ then $\alpha_2$ cancels. Indeed, as compared to the policy when $G_t=B_2$, we reduce $||C_t||_1$ by the factor $\left(1-\frac{\alpha_2qK}{qK+p}\right)$ instead of $\left(1-\frac{qK}{qK+p}\right) = \frac{p}{qK+p}$, but we also need only the fraction $\alpha_2$ for computational efforts. For large $n$, it turns out that the loss and the gain roughly compensate each other.

The question arises, how many nodes of $B_2$ should receive green light at the same time? Recall that after step~0, nodes of $B_1$ have negative cash and nodes in $B_2$ have positive cash. If we give green light only to nodes in $B_2$, the sign of the cash will not change for any of the nodes. Furthermore, whenever a node in $B_2$ receives a green light, it sends fraction $\frac{qK}{qK+p}$ of its cash to $B_1$, so this cash disappears from the system, but fraction $\frac{p}{qK+p}$ is distributed over other nodes in $B_2$ and stays in the system. This part can be depleted from the system when green light is given to the other nodes in $B_2$. Then the obvious greedy scheduling to deplete the cash from the system as fast is possible, is  to always give green light to a node in $B_2$ with maximal cash. Due to the symmetry of the system, this will result in giving green light to nodes of $B_2$ in a cyclic order. Without loss of generality assume that this cycle goes through the nodes from $Kn+1$ to $(K+1)n$,  and $C_{t,Kn+1} \ge C_{t,Kn+1} \ge \cdots \ge C_{t,(K+1)n}$, where $t$ is the starting time of the cycle. Then it is easy to compute that the node $Kn+\delta n$, where $\delta\in (0,1)$, will deplete from the system the amount of cash equal to
\[\frac{qK}{qK+p}\,C_{t,Kn+\delta n} + \frac{qK}{qK+p}\, \frac{p}{qK+p} \frac{1}{n} \left(\sum_{i=Kn+1}^{Kn+\delta n -1} C_{t,i}\right) (1+o(1)) \ge
\]
\[\frac{qK}{qK+p}\,C_{t,Kn+\delta n} \left(1+\frac{\delta p}{qK+p}\, (1+o(1))\right).\]
Hence, at the end of the cycle the total amount of cash remaining in the system will be smaller than $\frac{p}{qK+p}\,||C_t||_1$, so compared to $G_t=B_2$ more cash will be depleted at the same computational costs.

This suggests that an effective policy is to give green light to the nodes with highest amount of cash. Our numerical results show that such policies indeed perform well, however, they do not need to be optimal. In the next section we will show that finding optimal green light scheduling is a challenging task even on a very restricted set of policies in a quite simple mean-field SBM with three blocks.

\section{Effectiveness of cash-dependent green-light scheduling}
\label{sec:three_blocks}

The schedule that always gives green light to the smallest block, as discussed in Section~\ref{sec:two_blocks} for the two-block mean field SBM, could be considered as a particular case
of Markovian, cash-independent, schaduling strategies: we select the smallest block with probability one.

In this section we will show that in general one needs to consider truly cash-dependent scheduling strategies. As an  example, we will use the mean-field SBM with three blocks. As before, we assume that the density of the intra-block links is $p$, and the density of the inter-block links is $q<p$, so the transition matrix is as follows:
\begin{equation}
\label{eq:P3}
P = \left[\begin{array}{ccc}
\frac{p}{N_1(p-q)+N} J_{N_1,N_1} & \frac{q}{N_1(p-q)+N} J_{N_1,N_2} & \frac{q}{N_1(p-q)+N} J_{N_1,N_3}\\
\frac{q}{N_2(p-q)+N} J_{N_2,N_1} & \frac{p}{N_2(p-q)+N} J_{N_2,N_2} & \frac{q}{N_2(p-q)+N} J_{N_2,N_3}\\
\frac{q}{N_3(p-q)+N} J_{N_3,N_1} & \frac{q}{N_3(p-q)+N} J_{N_3,N_2} & \frac{p}{N_3(p-q)+N} J_{N_3,N_3}
\end{array}\right].
\end{equation}

In the remainder of this section we will analyze the class of scheduling strategies when all nodes of one block move their cash only together (we will also say that the cash is moved by blocks). In Subsection~\ref{ssec:3block_strategies} we formally describe this class of scheduling strategies and analyze the scenario when, similarly to Section~\ref{sec:two_blocks}, only one block moves its cash. Interestingly, we will see that such schedule does not guarantee the convergence of the RLGL algorithm. Next,
in Subsection~\ref{ssec:DP} we formulate and solve an optimization problem that obtains the optimal green-light scheduling. We demonstrate that the optimal scheduling is cash-dependent and provide numerical results that display its effectiveness.

\subsection{Green-light scheduling strategies when cash is moved by blocks}
\label{ssec:3block_strategies}

If we move the cash by blocks, the symmetry is preserved, and we can describe the state of the system by the cash row-vector
\[
C_t = [c_{t,1}\onevec_{N_1}^T \ c_{t,2}\onevec_{N_2}^T \ c_{t,3}\onevec_{N_3}^T],
\]
with
\[
c_{t,i}=\pi_{t,i}-\pi'_{t,i}, \quad i=1,2,3,
\]
and the total cash in the system is zero:
\begin{equation}
\label{eq:3blocksCashBalance}
N_1 c_{t,1} + N_2 c_{t,2} + N_3 c_{t,3} = 0.
\end{equation}
Since the cash is moved by blocks, we may give green light simultaneously to one, two or three blocks. The corresponding changes in the cash are as follows:

\begin{description}
\item[(i)] If $G_t=\{i\}\subset \{1,2,3\}$, then

\begin{align}
\label{eq:c1ii}
c_{t+1,i} &= c_{t,i} \frac{N_ip}{N_i(p-q)+Nq},&\\
\label{eq:c1ij}
c_{t+1,j} &= c_{t,j} + c_{t,i} \frac{N_iq }{N_i(p-q)+Nq},& j\in \{1,2,3\}\backslash \{i\}.
\end{align}

\item[(ii)] If $G_t=\{i,j\}\subset \{1,2,3\}$, then
\begin{align}
\label{eq:c2ii}
c_{t+1,k} &= c_{t,k} \frac{N_kp}{N_k(p-q)+Nq} + c_{t,l} \frac{N_lq}{N_l(p-q)+Nq},& \{k,l\}=\{i,j\},\\
\label{eq:c2ij}
c_{t+1,k} &= c_{t,k} + c_{t,i}\frac{N_iq}{N_i(p-q)+Nq} + c_{t,j} \frac{N_jq}{N_j(p-q)+Nq},& \{k\}=\{1,2,3\}\backslash \{i,j\}.
\end{align}

\item[(iii)] If $G_t=\{1,2,3\}$, then
\begin{align}
\label{eq:c3}
c_{t+1,i} &= c_{t,i} \frac{N_i p}{N_i(p-q)+Nq} + c_{t,j} \frac{N_jq}{N_j(p-q)+Nq} + c_{t,k} \frac{N_kq}{N_k(p-q)+Nq},& \{i,j,k\}=\{1,2,3\}.
\end{align}
\end{description}


We now would like to obtain insight on how to chose the sequence of actions ${\bf G}=(G_1,G_2,\ldots)$ so that the cost of the RLGL algorithm is minimized. Interestingly, if at each step $t$ we move the cash from the same block, then, in general, and in contrast to the situation in the two-block SBM,
the algorithm will not converge. This is stated formally in the next theorem.

\begin{theorem}
\label{th:divergence3}
Let $P$ be given by \eqref{eq:P3}. Fix $i=1,2,3$  and let $G_t=\{i\}$ for all $t\ge 1$. Denote by $t_0$ the time step when for the first time the cash in one of the blocks becomes zero or changes sign:
\[t_0=\min\{t: \sgn(c_{t,j}) = - \sgn(c_{t-1,j}) \mbox{ or $c_{t,j} =0$ for some $j=1,2,3$}\}.\]
Then the RLGL algorithm converges only if $\sgn(c_{1,i})=-\sgn(c_{1,j})$ for each $j\in \{1,2,3\}\backslash \{i\}$ and $t_0=\infty$. In all other cases
\[\liminf_{t\to \infty} ||C_t||_1>0,\]
so the RLGL algorithm does not converge.
\end{theorem}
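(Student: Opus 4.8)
The plan is to solve the one-block recursion \eqref{eq:c1ii}--\eqref{eq:c1ij} in closed form and read off the divergence/convergence dichotomy directly. Write $\mu=\frac{N_ip}{N_i(p-q)+Nq}$ and $\lambda=\frac{N_iq}{N_i(p-q)+Nq}$; since $q<p$ and $N_i<N$ one checks that $0<\lambda<\mu<1$. Then \eqref{eq:c1ii} gives $c_{t,i}=\mu^{t-1}c_{1,i}$, so $c_{t,i}\to 0$, while \eqref{eq:c1ij} is a scalar affine recursion with solution $c_{t,j}=c_{1,j}+\lambda c_{1,i}\frac{1-\mu^{t-1}}{1-\mu}$ for $j\neq i$. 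Using $1-\mu=\frac{(N-N_i)q}{N_i(p-q)+Nq}$ I would simplify $\frac{\lambda}{1-\mu}=\frac{N_i}{N-N_i}$ and obtain the limit $c_{\infty,j}:=\lim_{t\to\infty}c_{t,j}=c_{1,j}+\frac{N_i}{N-N_i}\,c_{1,i}$.

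Next I would record two structural facts. First, $c_{t+1,j}-c_{t,j}=\lambda\mu^{t-1}c_{1,i}$ has constant sign $\sgn(c_{1,i})$, so for each $j\neq i$ the sequence $(c_{t,j})_{t\ge 1}$ is strictly monotone (increasing if $c_{1,i}>0$, decreasing if $c_{1,i}<0$) and converges monotonically to $c_{\infty,j}$, while $|c_{t,i}|$ is strictly decreasing with constant sign. Second, since $||C_t||_1=\sum_k N_k|c_{t,k}|$ and every coordinate converges, the limit $\lim_{t\to\infty}||C_t||_1=\sum_{j\neq i}N_j|c_{\infty,j}|$ exists. Hence $\liminf_{t\to\infty}||C_t||_1>0$ precisely when $c_{\infty,j}\neq 0$ for some $j\neq i$, so the RLGL algorithm converges if and only if $c_{\infty,j}=0$ for both $j\neq i$.

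It then remains to show that the condition ``$c_{\infty,j}=0$ for all $j\neq i$'' is equivalent to the two conditions in the statement. For the easy direction, $c_{\infty,j}=0$ forces $c_{1,j}=-\frac{N_i}{N-N_i}c_{1,i}$, which yields $\sgn(c_{1,j})=-\sgn(c_{1,i})$; moreover $c_{t,j}$ then approaches $0$ monotonically from the side of $c_{1,j}$ without reaching it at any finite $t$, and $c_{t,i}$ keeps its sign, so no coordinate changes sign or hits zero and $t_0=\infty$. For the converse I would argue by contraposition using the cash balance \eqref{eq:3blocksCashBalance}: passing to the limit and using $c_{\infty,i}=0$ gives $\sum_{j\neq i}N_jc_{\infty,j}=0$. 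If the sign condition holds but the $c_{\infty,j}$ do not all vanish, this balance forces exactly one limit to be strictly positive and the other strictly negative; the positive one, say $c_{\infty,j^*}>0$, is approached monotonically from $c_{1,j^*}<0$, so $c_{t,j^*}$ must cross zero at a finite time, producing a sign change (or an exact zero) and hence $t_0<\infty$.

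The main obstacle I anticipate is exactly this last linkage: $t_0=\infty$ is on its own strictly weaker than $c_{\infty,j}=0$, since a monotone sequence may avoid sign changes while still converging to a nonzero limit, so the argument genuinely needs the balance identity \eqref{eq:3blocksCashBalance} to exclude the ``both limits nonzero'' configuration and thereby upgrade $t_0=\infty$ into the vanishing of every limit. I would also dispose of the degenerate case $c_{1,i}=0$ separately: there the sign condition forces $c_{1,j}=0$ for both $j$, hence $C_t\equiv{\bf 0}$, the trivial fixed point corresponding to the non-informative solution $H_t\equiv{\bf 0}$ discussed after Lemma~\ref{th:general}.
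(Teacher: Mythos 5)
Your proof is correct and follows essentially the same route as the paper's: both arguments reduce to the observations that $c_{t,i}\to 0$ geometrically while each $c_{t,j}$, $j\ne i$, moves monotonically (in the direction of $\sgn(c_{1,i})$) to the limit $c_{1,j}+\frac{N_i}{N-N_i}\,c_{1,i}$ --- the paper obtains this limit by summing the total cash received by block $j$, your closed-form solution of the affine recursion gives the identical quantity --- and both then invoke the zero-sum constraint to exclude the configuration in which $t_0=\infty$ yet some limit is nonzero. Your packaging as the equivalence ``convergence $\iff$ all limits vanish'' simply reorganizes the paper's three-case analysis, with the minor bonuses that you treat the degenerate case $c_{1,i}=0$ explicitly and state correctly the limit value that the paper's Case~2 display misstates by a sign.
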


\begin{proof} Without loss of generality, take $i=1$ and assume that at time $t=1$ the cash in block~1 is positive, $c_{1,1}>0$. Since we give green light only to block~1, it follows from \eqref{eq:c1ii} that its cash will remain positive and decrease exponentially in $t$:
\[c_{t,1}=c_{1,1}\left(\frac{N_ip}{N_i(p-q)+Nq}\right)^{t-1}>0, \quad t \ge 1.\]
In the limit, the total amount of cash transferred by block~1 is $c_{1,1}N_1$, out of which block~$j=2,3$ receives

\begin{equation}
\label{eq:received}
\sum_{t=1}^\infty\mbox{[cash received by block~$j$ at time $t$]} = \frac{N_j}{N_2+N_3}\,c_{1,1}N_1,\quad j=2,3.
\end{equation}
We will consider three possible cases.

{\bf Case~1.} Suppose that $c_{1,2},c_{1,3}<0$ and $t_0=\infty$. (This happens, for example,  when $N_2=N_3$ so the amounts of cash in blocks~2 and 3 are identical: $c_{t,2}=c_{t,3}= - c_{t,1}/2<0$, $t\ge 1$.) Since $t_0=\infty$ implies that $c_{t,j}<0$, $j=2,3$, for all $t\ge 1$, and the total amount of cash in the system is zero, we have that
\[||C_t||_1 = 2|c_{t,1}|N_1\to 0\quad \mbox{as $t\to\infty$},\]
so the RLGL algorithm converges.

{\bf Case 2.} Suppose that $c_{1,2},c_{1,3}<0$ and $t_0$ is finite. This happens when for some block $j=2,3$, the absolute value of cash at time $t=1$, $|c_{1,j}|N_j$, is smaller than the right-hand side of \eqref{eq:received}. Then
\[\liminf_{t\to\infty} ||C_t||_1 \ge \lim_{t\to\infty} N_j|c_{t,j}| =
\frac{N_j}{N_2+N_3}\,c_{1,1}N_1 - c_{1,j}N_j>0,\]
so the RLGL algorithm does not converge.

{\bf Case 3.} Suppose that $c_{1,j}>0$ for some $j=2,3$. Since block~$j$ receives only positive cash, its cash can only increase, so we have
\[\liminf_{t\to\infty} ||C_t||_1 \ge \lim_{t\to\infty} N_j c_{t,j}  =
\frac{N_j}{N_2+N_3}\,c_{1,1}N_1 + c_{1,j}N_j>0,\]
and the RLGL algorithm does not converge.
\end{proof}

\begin{remark}
Note that the RLGL algorithm converges only in Case~1 in the proof of Theorem~\ref{th:divergence3}. In this case it must hold that $\frac{N_j}{N_2+N_3}\,c_{1,1}N_1 = c_{1,j}N_j$, $j=2,3$, so this case is quite exceptional and requires some balancing of the parameters, e.g., $N_2=N_3$.
\end{remark}

\begin{remark} In the mean field SBM all nodes in the same block are identical, and therefore they can be lumped into one state of an aggregated Markov chain with states $i=1,2,3$. Then the results in Section~\ref{sec:two_blocks} and Theorem~\ref{th:divergence3} show that for Markov chains with more than two states, if green light is given only to one state $i$, then, in general, the RLGL algorithm does not converge. Again, the exception corresponds to Case~1 in the proof of Theorem~\ref{th:divergence3}.
\end{remark}

From Theorem~\ref{th:divergence3} we conclude that routing exclusively the cash of a single block does not result in convergence of the RLGL algorithm (except in some special cases). However, $G_t=\{i\}$ can be a good choice for some time period. The above analysis already indicates that
the structure of an optimal scheduling for green light is non-trivial.

In order to find optimal strategies for green-light scheduling, we will next formulate the problem of optimal block schedule as an optimal control problem.

\subsection{The optimal block schedule}
\label{ssec:DP}

The theory of Markov Decision Processes (MDP) \cite{Puterman,Hernandez2012} provides
an appropriate framework for controlled Markov chains with total cost criterion.
The main components of an MDP model are: system states, available actions and one-step
costs.

As the first obvious choice for the state variables of the MDP model, one could take $c_{t,1}$ and $c_{t,2}$ ($c_{t,3}$ can be immediately restored using the relation (\ref{eq:3blocksCashBalance})).

In our case the MDP action space consists of 7 actions, i.e., $A=\{a_1,...,a_7\}$, with actions corresponding to the sets of blocks receiving green light. Specifically, let
$$
a_1=\{1\}, \ a_2=\{2\}, \ a_3=\{3\}, \ a_4=\{1,2\}, \ a_5=\{2,3\}, \ a_6=\{1,3\}, \ a_7=\{1,2,3\}.
$$
For instance, if we choose the action $a_5$, we give green light simultaneously to blocks 2 and 3.

If $||C_t||_1 > \varepsilon$ for the chosen precision $\varepsilon>0$, the one-step cost
$\kappa(a)$ of action $a$ is equal to the average number of operations in the SBM graph
with the given choice of the routing blocks:
\begin{align*}
\kappa(a_1) &= N_1(pN_1+qN_2+qN_3),\\
\kappa(a_2) &= N_2(qN_1+pN_2+qN_3),\\
\kappa(a_3) &= N_3(qN_1+qN_2+pN_3),\\
\kappa(a_4) &= \kappa(a_1)+\kappa(a_2),\\
\kappa(a_5) &= \kappa(a_2)+\kappa(a_3),\\
\kappa(a_6) &= \kappa(a_1)+\kappa(a_3),\\
\kappa(a_7) &= \kappa(a_1)+\kappa(a_2)+\kappa(a_3).
\end{align*}
If $||C_t||_1 \le \varepsilon$ then the desired precision is achieved, and the cost of any action is zero.
We aim to find a green-light scheduling strategy that optimizes the total expected cost:
\begin{equation}
\label{eq:TotalCost}
V(c)=E[ \ \sum_{t=0}^{\infty} \kappa(G_t) \ | \ c_0=c \ ] \to \min,
\quad G_t \in A, \ t\ge 0,
\end{equation}
when we start the process from state $c_0=c$.
The above criterion means that we would like to reach the precision $\varepsilon$ spending
the least number of operations possible.
It is known \cite{Hernandez2012} that we can limit the search to the stationary deterministic strategies.
Such an optimal strategy is described by the following dynamic programming equation:
\begin{equation}
\label{eq:dynprog}
V(c) = \min_a \left[ \kappa(a) + V(f_a(c)) \right],
\end{equation}
where $f_a(c)$ is the next state provided that the current state is $c=(c_1,c_2)$ and action $a$ is applied.

Let us first show that we can limit the search of optimal policy to the half plane $c_1 \ge 0$.
This follows from the next theorem.

\begin{theorem}
\label{th:Vproperty}
The value function $V(c)$ and the sets of optimal actions $A(c)$ have the following property:
$$
V(-c)=V(c), \quad A(-c)=A(c).
$$
\end{theorem}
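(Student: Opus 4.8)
The plan is to exploit the fact that the cash dynamics are \emph{linear and homogeneous} in the state, so that negating the initial cash simply negates the entire trajectory while leaving every cost and the stopping criterion unchanged. Writing $\sigma(c)=-c$ for the reflection, where $c=(c_1,c_2)$ and $c_3$ is recovered from \eqref{eq:3blocksCashBalance}, I would first record three structural invariances under $\sigma$: (i) the stopping region $\{\,||C||_1\le\varepsilon\,\}$ is symmetric, since $||-C||_1=||C||_1$; (ii) each transition map is odd, $f_a(-c)=-f_a(c)$; and (iii) the one-step cost $\kappa(a)$ depends only on the action $a$ and not on $c$.

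The only point requiring a small computation is invariance (ii). I would verify it directly from the update rules \eqref{eq:c1ii}--\eqref{eq:c3}: in every case $c_{t+1,\cdot}$ is a linear combination of $c_{t,1},c_{t,2},c_{t,3}$ with no additive constant, and the balance relation \eqref{eq:3blocksCashBalance} expressing $c_3$ through $(c_1,c_2)$ is itself linear and homogeneous. Hence, as a map on the two-dimensional state $(c_1,c_2)$, each $f_a$ is linear and homogeneous, so $f_a(-c)=-f_a(c)$.

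With these invariances in hand, I would prove $V(-c)=V(c)$ by induction along the value-iteration iterates of the (deterministic, total-cost) dynamic program \eqref{eq:TotalCost}. Set $V_0\equiv 0$, which is even, and define $V_{k+1}(c)=0$ on the stopping region and $V_{k+1}(c)=\min_a\bigl[\kappa(a)+V_k(f_a(c))\bigr]$ otherwise. Assuming $V_k(-c)=V_k(c)$, invariance (i) handles the stopping region, while for $c$ outside it invariances (ii) and (iii) give
\begin{equation*}
V_{k+1}(-c)=\min_a\bigl[\kappa(a)+V_k(f_a(-c))\bigr]=\min_a\bigl[\kappa(a)+V_k(-f_a(c))\bigr]=\min_a\bigl[\kappa(a)+V_k(f_a(c))\bigr]=V_{k+1}(c).
\end{equation*}
Thus every $V_k$ is even, and since value iteration converges to $V$ for this total-cost MDP, $V$ is even.

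Finally, for the optimal action sets I would use the dynamic programming equation \eqref{eq:dynprog} directly, writing $A(c)=\argmin_a[\kappa(a)+V(f_a(c))]$, and then evaluating at $-c$:
\begin{equation*}
A(-c)=\argmin_a\bigl[\kappa(a)+V(f_a(-c))\bigr]=\argmin_a\bigl[\kappa(a)+V(-f_a(c))\bigr]=\argmin_a\bigl[\kappa(a)+V(f_a(c))\bigr]=A(c),
\end{equation*}
where the middle equality is oddness of $f_a$ and the last uses $V(-x)=V(x)$. Since the function being minimized over $a$ is literally the same at $c$ and at $-c$, the argmin sets coincide. The main (and only mild) obstacle is pinning down the oddness of $f_a$ cleanly from the explicit updates and ensuring that value-iteration convergence transfers the evenness of the $V_k$ to the limit $V$; everything else is immediate from the symmetry of $||\cdot||_1$ and the fact that the cost depends on the action alone.
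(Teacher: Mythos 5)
Your proposal is correct and follows essentially the same route as the paper: an induction along the value-iteration iterates using the oddness (homogeneous linearity) of each $f_a$, the symmetry of $\Vert\cdot\Vert_1$ on the stopping region, and the fact that $\kappa(a)$ does not depend on the state. The only cosmetic differences are that you verify the oddness of $f_a$ explicitly from the update rules and derive $A(-c)=A(c)$ from the limiting dynamic programming equation rather than by carrying $A_n$ through the induction; both are fine.
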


\begin{proof} As often the case in the MDP theory, the structural properties can be proved
with the help of the value iteration algorithm. Let now $n$ denote the running index of the
value iteration algorithm. We emphasize that the index $n$ does not correspond to the time.
From \cite{Puterman,Hernandez2012}, we know that the value iterations
$$
V_{n+1}(c) = \min_a \left[ \kappa(a) + V_n(f_a(c)) \right],
$$
$$
A_{n+1}(c)=\arg \min_a \left[ \kappa(a) + V_n(f_a(c)) \right],
$$
with $V_0(c) = 0$ for all $c$, and $V_n(c)=0$ for all $n$ and $c$ such that $\Vert c \Vert_1<\varepsilon$,
converge to the value function $V(c)$ and optimal control $A(c)$.
Note that $A_{n}(c)$ is only defined
for $\Vert c \Vert_1\ge\varepsilon$ and the convergence of $V_n(c)$ is monotone in $n$.

We show that $V_n(-c)=V_n(c)$ and $A_n(-c)=A_n(c)$ for any $n$ by induction.
It is evidently true for $n=0$. Let us assume it is true for $n=k$.
If $\Vert c \Vert_1<\varepsilon$ then by the boundary condition $V_{k+1}(c)=V_{k+1}(-c)=0$.
For $\Vert c \Vert_1\ge\varepsilon$, we can write:
\begin{align*}
V_{k+1}(c) &= \min_a \left[ \kappa(a) + V_k(f_a(c)) \right]\\
 &= \min_a \left[ \kappa(a) + V_k(-f_a(c)) \right] &&\text{\;(true by assumption in step \:}n=k)\\
 &= \min_a \left[ \kappa(a) + V_k(f_a(-c)) \right]&&\text{\;(since\:}f_a \text{\; is linear in $c$})\\
 &= V_{k+1}(-c). \\
\end{align*}
Similarly,
\begin{align*}
A_{k+1}(c)&=\arg \min_a \left[ \kappa(a) + V_k(f_a(c)) \right],\\
&=\arg \min_a \left[ \kappa(a) + V_k(f_a(-c)) \right],\\
&=A_{k+1}(-c).
\end{align*}
Thus, by induction, the statement of the theorem follows.
\end{proof}

\bigskip

The above theorem implies that we can limit ourselves to the part of the state space with $c_1 \ge 0$.
In particular, if the system moves out of the half plane $c_1 \ge 0$, we can just take an optimal
action corresponding to $-c$. Due to Theorem~\ref{th:Vproperty}, $V(-c)=V(c)$, and the total optimal cost starting from $c$ will be the same.

Despite the above simplification of the state space, we still face a serious computational problem
because the state space is very ``crowded'' around the point (0,0). From the analysis performed
at the beginning of this section and in Subsection~\ref{sec:dobrushin} we can expect that the convergence
will be at least exponentially fast.
Thus, we suggest to choose as the first component of the state vector the logarithmic transformation
of the total absolute cash, i.e., $\log_{10}(||c||_1/\varepsilon)$. The choice of the second component
of the state space becomes more tricky. We suggest the following choice:
$$
z=(z_1,z_2)=\left(\log_{10} (||c||_1/\varepsilon), y_2-y_3\right),
$$
where  $y_i=2 N_ic_i / ||c||_1$ and assuming $c_1\ge 0$. Let us show that $z$ describes the considered half space. Obviously, $z_1\in [0,\log_{10} (||c_0||_1/\varepsilon)]$ and we can immediately retrieve $||c||_1$ from $z_1$.
Let us next show that there is a one-to-one correspondence between $z_2$ and $(y_2,y_3)$, given
$y_1+y_2+y_3=0$ and $c_1 \ge 0$. We shall use the following set of equations:
\begin{align*}
y_1+y_2+y_3&=0,\\
|y_1|+|y_2|+|y_3|&=2,\\
y_1^+ +y_2^+ +y_3^+&=1,\\
y_1^- +y_2^- +y_3^-&=-1,
\end{align*}
where $y^+=\max(y,0)$and $y^-=\min(y,0)$. Clearly, the values of $y_2$ and $y_3$ uniquely define $z_2=y_2-y_3$.
Let us also show that $z_2$ covers continuously the interval $[-2,+2]$.
We need to consider three cases depending on the possible value of $y_3$:
\begin{itemize}
\item[i)] $y_3=-1$: From the equations $y_1^+ + y_2^+=1$ and $y_1\ge 0$, we conclude that $y_2 \le 1$,
and from the equation $y_2^-=0$, we conclude that $y_2 \ge 0$. Thus, $y_2\in [0,1]$ and hence
\begin{align*}
z_2=y_2-y_3=y_2+1\in [1,2].
\end{align*}
\item[ii)] $y_3 \in (-1,0]$: From the equation $y_2^-+y_3=-1$, it follows that $y_2=-1-y_3 \in [-1,0)$
and hence
\begin{align*}
z_2=y_2-y_3=-1-2y_3\in [-1,1).
\end{align*}
\item[iii)] $y_3 \in (0,1]$: From the equation $y_2^-=-1$, we immediately have $y_2=-1$. Then, from
the equation $y_1^+ +y_3^+=1$ we conclude that $y_3\le 1$ and thus
\begin{align*}
z_2=y_2-y_3=-1-y_3\in [-2,-1).
\end{align*}
\end{itemize}
Using the derived equations for $z_2$ for the three cases considered above, we can express $y_2$
and $y_3$ as functions of $z_2$. Namely,
\begin{itemize}
 \item[i)] $z_2\in [1,2]$: $y_2=z_2-1$ and $y_3=-1$;
 \item[ii)] $z_2\in [-1,1)$: $y_2=(z_2-1)/2$ and $y_3=-(z_2+1)/2$;
 \item[iii)] $z_2\in [-2,-1)$: $y_2=-1$ and $y_3=-z_2-1$;
\end{itemize}
and thus we conclude that there is a one-to-one correspondence between $z_2$ and $(y_2,y_3)$.

We can solve dynamic programming equation \eqref{eq:dynprog} by discretization of the state space.
One more important advantage of this modified state space is that it allows us to efficiently
organize the optimization process due to the fact that $z_1$, being the logarithm of $||c||_1$,
is non-increasing for all actions. Indeed, we can use the backwards induction, since
$V(z_1,z_2)$ depends only on $V(z'_1,z'_2)$ such that $z_1 > z_1'$ (every action reduces the total cash).
Thus, the value function $V(z_1,z_2)$ can be calculated in one pass starting from $V(0,z_2) \equiv 0$.
We observed that the discretization of $z_2$ does not need to be too fine. This allows us to use much
finer grid along the $z_1$-axis.


Let us apply this approach to determine the optimal block policy
for a mean-field SBM with representative parameters. Specifically, we choose:
$N_1=50$, $N_2=20$, $N_3=10$, $p=0.1$, $q=0.01$, $\varepsilon=10^{-14}$.
We have discretized the [0,14] interval of the $z_1$-axis with 14000 points and the [-2,+2] interval
of the $z_2$-axis with 321 points. Using the backwards induction, we have obtained
the optimal actions depicted in Figure~\ref{fig:OptActions}.

\begin{figure}[ht!]
     \centering
     \begin{subfigure}[b]{0.45\textwidth}
	   \includegraphics[width=0.98\textwidth]{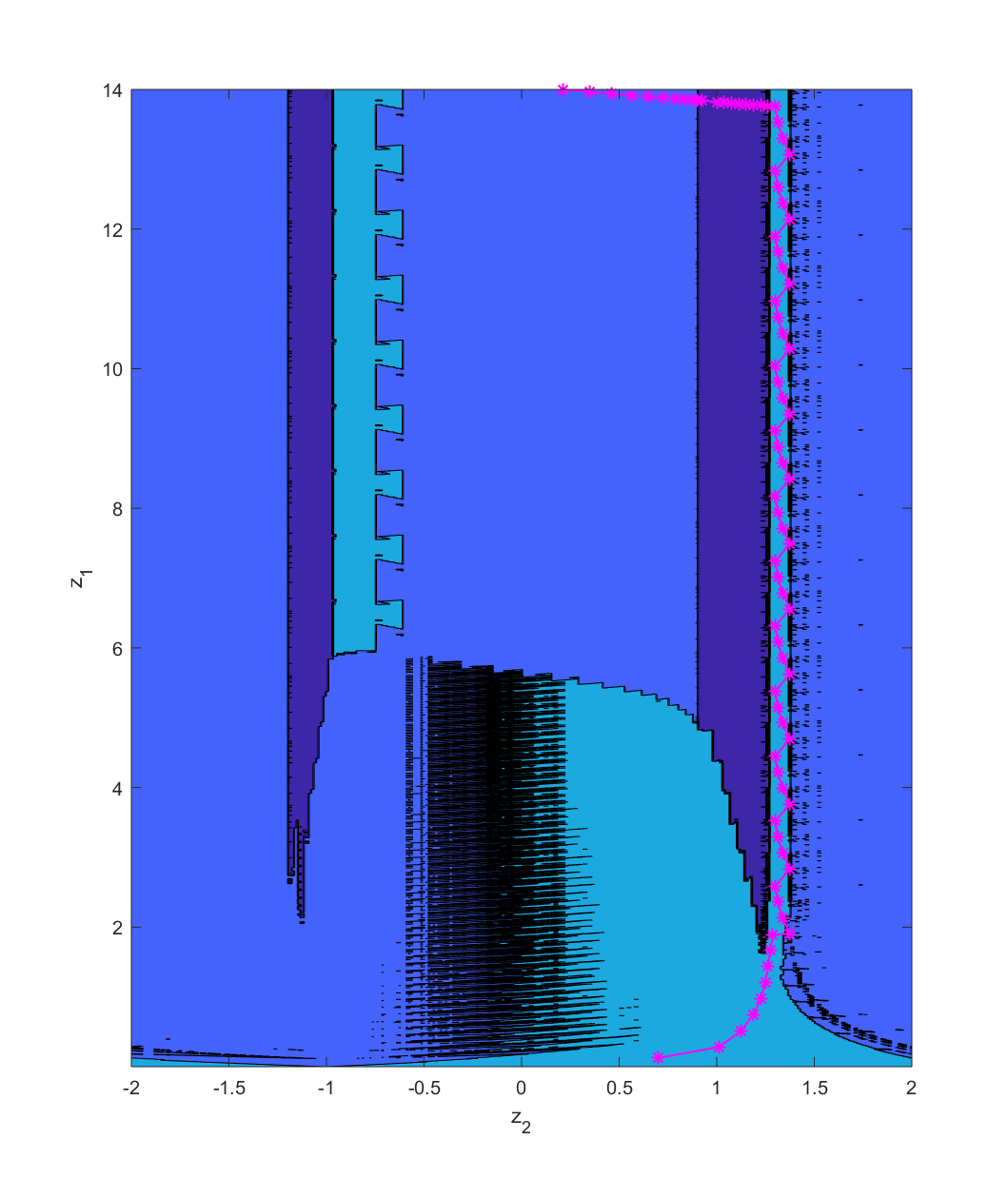}
       \caption{Actions in the complete state space.}\label{fig:OptActionsComp}
     \end{subfigure}
     \hfill
     \begin{minipage}[b]{0.45\textwidth}
       \begin{subfigure}[b]{\linewidth}
        \centering
	    \includegraphics[width=0.7\textwidth]{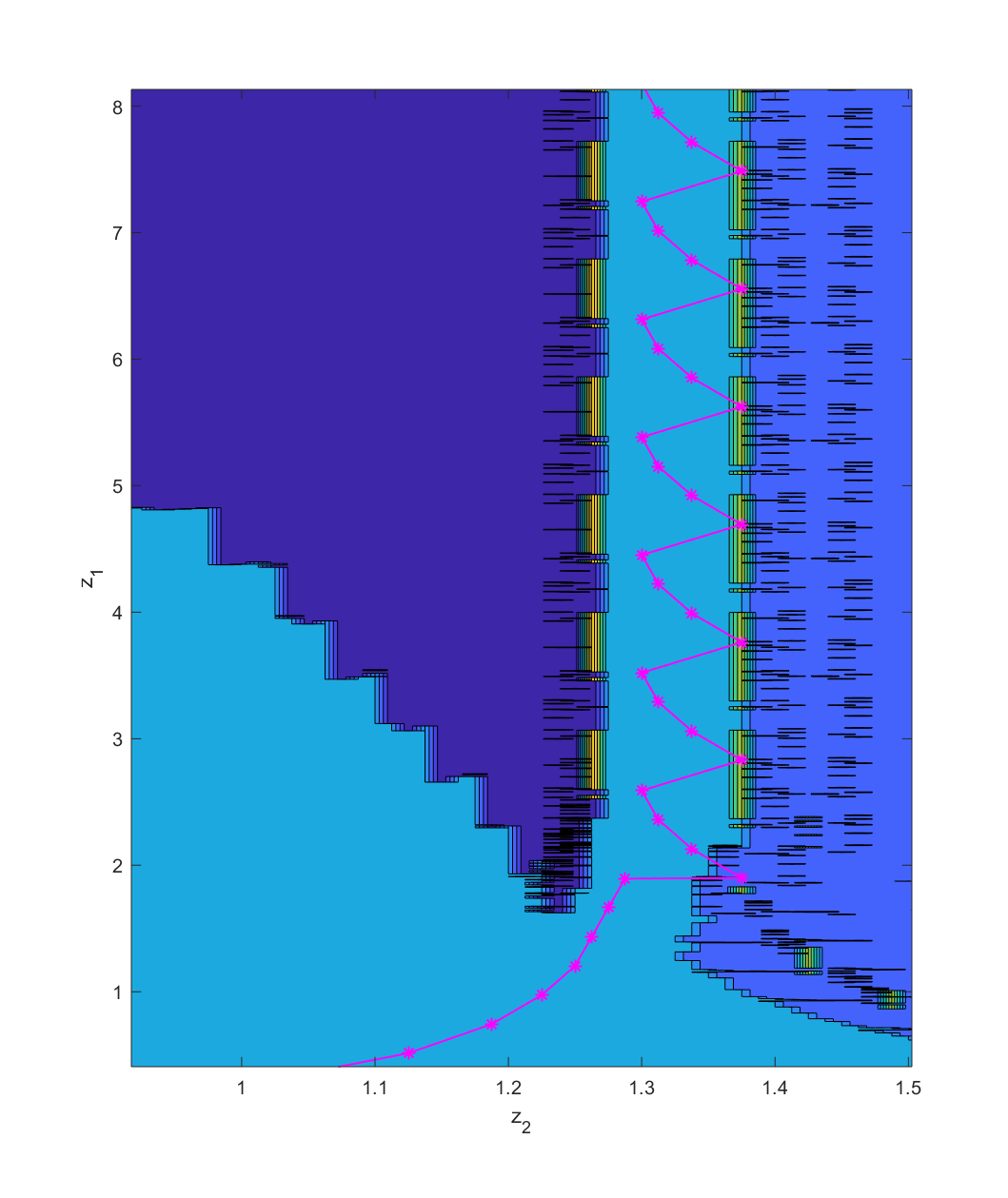}
        \caption{Zoom on the strategy along the turnpike.}\label{fig:OptActionsZoom}
       \end{subfigure}\\[\baselineskip]
       \begin{subfigure}[b]{\linewidth}
        \centering
	    \includegraphics[width=0.8\textwidth]{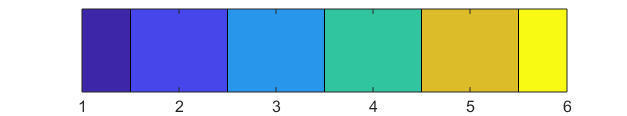}
        \caption{Colours denoting actions.}\label{fig:ActionsScale}
       \end{subfigure}
     \end{minipage}
     \caption{[Best viewed in colour] The optimal green-light scheduling in the mean-field SBM with three blocks.
     The magenta line corresponds to the optimal RLGL trajectory starting from one PI initialization.} \label{fig:OptActions}
\end{figure}

\begin{figure}[h!]
     \centering
     \begin{subfigure}[b]{0.45\textwidth}
       \centering
	   \includegraphics[width=0.99\textwidth]{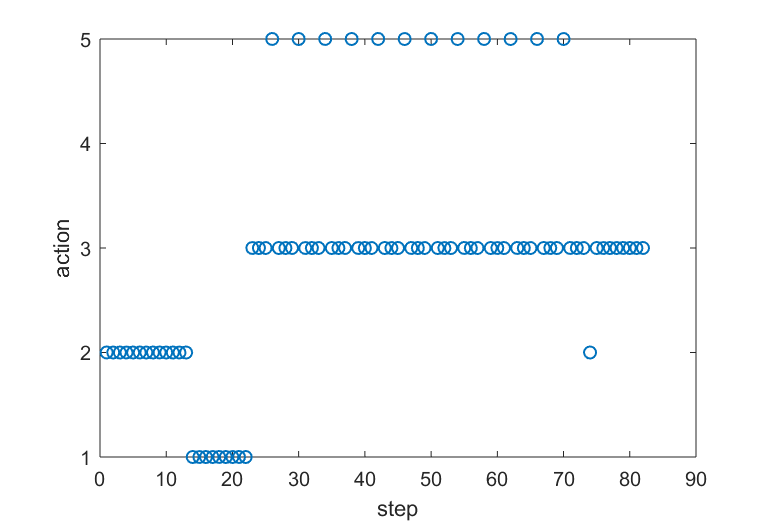}
       \caption{Actions taken by the optimal schedule.}\label{fig:ActionsTaken}
     \end{subfigure}
     \hfill
     \begin{subfigure}[b]{0.45\textwidth}
       \centering
	   \includegraphics[width=0.9\textwidth]{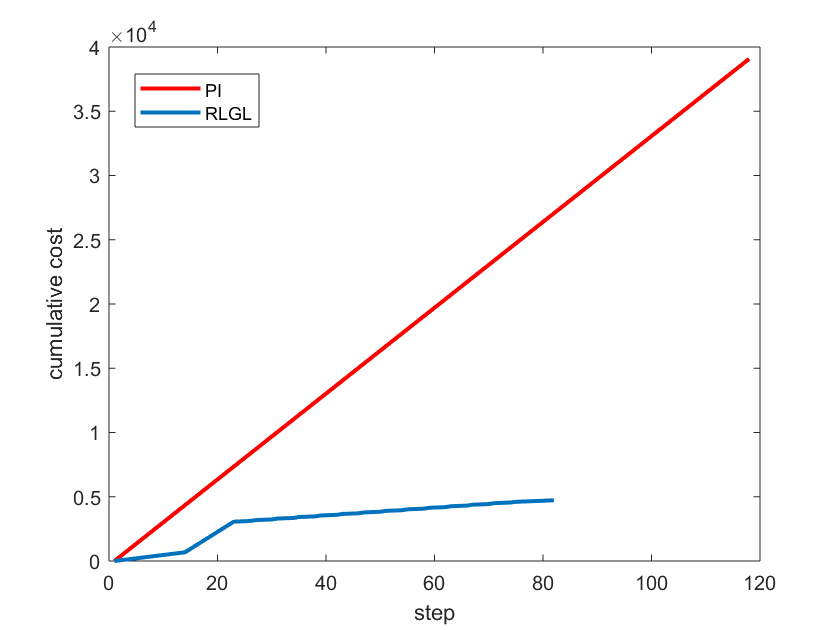}
       \caption{Comparison of cumulative costs: RLGL vs PI.}\label{fig:CostsRLGLvsPI}
     \end{subfigure}
     \caption{Mean-field SBM with three blocks.}\label{fig:MFSBM3blocks}
\end{figure}

%
%

The vast majority of optimal actions in the action space consists of actions $a_2=\{2\}$ and $a_3=\{3\}$,
which makes sense as these actions correspond to routing the cash from the two smallest
blocks.

Recall that the basic version of RLGL creates the initial cash distribution with one
step of power iteration, which results in
$$
c_{0,i} = \frac{\zeta}{N} \sum_{j=1}^{3} N_j \frac{N_j-N_i}{(N_i\zeta+N)(N_j\zeta+N)}
$$
(according to our convention, $N_1 \ge N_2 \ge N_3$, $N_1+N_2+N_3=N$). Then we can see from
the above expression that the largest block (Block~1) initially has negative
cash and the smallest block (Block~3) has positive cash. The cash of Block~2 depends on the system parameters.

We have run RLGL with the optimal green-light schedule obtained from the dynamic programming
and have superposed the obtained optimal trajectory on the action plan, see the magenta line
in Figures~\ref{fig:OptActionsComp},\ref{fig:OptActionsZoom}. It is very interesting that we can
observe a manifestation of the turnpike principle \cite{mckenzie1976turnpike}.
This principle could be intuitively
explained by a highway (turnpike), which a car first needs to join and then to leave close to
the target. In our case, the turnpike consists of a periodic pattern of actions $(a_3,a_3,a_3,a_5)$,
see Figures~\ref{fig:OptActionsComp},\ref{fig:OptActionsZoom}. Note that $a_5$ gives green light
simultaneously to Blocks 2 and 3, the two smallest blocks.
In Figure~\ref{fig:ActionsTaken} we provide a complete sequence of optimal actions along
the RLGL trajectory starting from PI initialization.
Most of the actions taken are either $a_2=\{2\}$ or $a_3=\{3\}$, which corresponds to least effort.

For this optimal sequence of actions,
we compare in Figure~\ref{fig:CostsRLGLvsPI} the running costs of RLGL and PI.
It is quite remarkable that
not only the total cost of RLGL is about 10 times smaller than the total cost of PI but also
this cost reduction takes place consistently during nearly the whole run.
Moreover, RLGL achieves the goal with fewer updates than the number of the PI iterations.
This makes a contrast with the two blocks example, where PI converges faster but at a much
higher cost. In fact, the action $a_7=\{1,2,3\}$, which corresponds to PI,
is not present in the optimal policy.

%

Of course, we are conscious of the fact that in practice it is unreasonable to solve a complicated dynamic programming problem to elaborate an optimal schedule of updates. However, we can draw several conclusions
useful for practice from this exercise: firstly, by scheduling the routing of cash in a right way one can obtain significant gains
with respect to the standard method of power iterations. Secondly, one should schedule updates taking
into account the current cash value. And thirdly, one should try to schedule smaller
subsets of states (possibly with a large value of cash aiming to annihilate as much cash as possible).


\section{Numerical results}
\label{sec:num}

As we saw in the previous section, for a general Markov chain, the optimal cash routing requires the solution of a dynamic programming problem with an exponential number of actions. Clearly this is not feasible for any reasonable size of transition matrix and calls for the elaboration of  heuristics. In fact in Section \ref{sec:exp_convergence} we have shown an exponential convergence of the following two cash-independent heuristics for green light scheduling.

\bigskip
\textbf{Cash-independent heuristics:}
\begin{description}
\item[RLGL\_RR,] $G_t = \{i \mid  i \equiv t \pmod N\}$, nodes route their cash in a cyclic or round robin manner as with Gauss-Seidel. As numerical examples will demonstrate, RLGL\_RR converges typically with a rate twice faster than PI. Intuitively as in the case of Gauss-Seidel (GS), RLGL\_RR uses the most recently updated information. This possibly explains the superior performance in comparison with PI. Furthermore, RLGL\_RR can be implemented in a distributed way by token passing.
\item[RLGL\_Rand,] $G_t = \{i\}$ with probability $1/N$, nodes are chosen  randomly. The performance of this heuristic is slightly worse than that of PI, as suggested by the discussion at the end
    of Subsection~\ref{ssec:geometric}. One significant advantage of this heuristic is its easy asynchronous implementation.
\end{description}

In the previous
Sections~\ref{sec:two_blocks}~and~\ref{sec:three_blocks} we observed that giving green light to the nodes with a large absolute value of cash can be more effective. Therefore, we also propose and test a series of heuristics with cash-dependent scheduling, in particular, heuristics prioritizing green-light scheduling to nodes with large absolute value of cash.

\bigskip
\textbf{Cash-dependent heuristics:}

\begin{description}
\item[RLGL\_Greedy,] $G_t = \argmin_{i}{\Vert C_{t+1} \Vert}_1$, the green light is given to the node which minimizes the next total absolute cash at the next step. {Somewhat surprisingly, it has not
    performed well on several examples.}
\item[RLGL\_MaxC,] $G_t = \{i \mid C_{t,i} = \max_j |C_{t,j}|\} $, the green light is given to a node with maximum absolute cash. This heuristic performs  well in general as we will see in the ensuing numerical examples. However, this heuristic raises two concerns: can it scale to large systems and can it be distributed?
\item[RLGL\_PC,] $G_t = \{i\}$ with probability proportional to absolute value of cash
$|C_{t,i}|$. This heuristic looks like a good compromise between RLGL\_Rand and RLGL\_MaxC. It can be asynchronously distributed with a Poisson clock where node $i$ is updated after continuous time distributed according to the probability distribution
$1-e^{-|C_i|t}$.
In contrast to RLGL\_Rand, RLGL\_PC gives advantage to nodes with a large absolute amount of cash.
However, since the total value of cash decreases in time the intervals between updates will grow.
We elaborate on the mitigation of this issue a bit later in this section.
\item[RLGL\_Theta,] $G_t = \{i \mid  i \equiv t \pmod N\ \text{ and } |C_{t,i}| \ge \theta_t(r) \}$, where $\theta_t(r) = \left(\sum_j \frac{|C_{t',j}|^r}{N} \right)^\frac{1}{r}$, $r\ge 1$,
    $t'= \lfloor t/N \rfloor$, i.e.,
    nodes route their cash if their cash exceeds a threshold, which is updated periodically.
    This heuristic becomes more selective as we increase $r$ and hence the value of the threshold, approaching RLGL\_MaxC.
To make this heuristics more scalable, the threshold can be  updated less frequently and not necessarily periodically. As will be shown later in this section, a big advantage of RLGL\_Theta is its ability to overcome the clustering structure in networks.
\end{description}

We will compare the above RLGL heuristics with the following \textbf{reference methods}:
\begin{description}
\item[PI] The power iteration method: $\hat \pi_{t+1}=\hat \pi_tP$, $t\ge 0$.
\item[GS] The Gauss-Seidel method: $\hat \pi_{t+1,j}=\left(\sum_{i<j} \hat\pi_{t+1,i}p_{i,j}+\sum_{i>j} \hat\pi_{t,i}p_{i,j}\right)/(1-p_{i,i})$, $t\ge 0$.
\item[GMRES] The GMRES method belongs to a family of methods for solving linear systems $Ax=b$  based on Krylov subspace: $K_{M+1}=\mbox{span}\{r_0,r_0A,\dots,r_0A^M\}$, where $M+1$ is the dimension of the Krylov subspace considered and $r_0=b-Ax_0$.  GMRES approximates the exact solution of $Ax=b$ by the vector $x_{M+1} \in K_{M+1}$ that minimizes the Euclidean norm of the residual $Ax_{M+1}-b$. In the case of Markov chains we take:
$$
A=P^T-I,
\quad
b={\bf 0},
$$
and use as initial guess a vector $x_0\ne {\bf 0}$ \cite{Stewart1992GMRES_Markov}.

\item[GSo-PR] The Gauss-Southwell method, which is only applied to PageRank, but it is an important application. See its detailed description in Algorithm~\ref{alg:GSo}.
For comparison, we will apply some scheduling heuristics for choosing $G_t$ as in RLGL. For instance, GSo-PR\_RR stands for the version of Gauss-Southwell method scheduling green light in round robin fashion.
\end{description}

Since our main aim is the computation of stationary distributions and/or PageRank for large Markov chains, web graphs naturally provide challenging examples. Besides their size, web graphs present even more challenges: heterogeneity in terms of node degrees, clustering structure, absorbing strongly connected components.
In addition to the web graphs we also tested two synthetic graphs: a stochastic block model graph and a two wheels graph. The stochastic block model presents clustering structure and the two wheels graph (see Figure~\ref{fig:twowheels}), despite being a simple graph, in addition to its clustering structure, presents significant degree heterogeneity. We have also compared
numerically the optimal solution with various heuristics on the mean-field three-block SBM.
We summarize the graph data in Table~\ref{tab:graphs}. In Tables~\ref{tab:gmres} and \ref{tab:gmres2} we compare the average  processing times over $10$ runs of RLGL and GMRES for the residual to be lower than a value $\varepsilon$, i.e.,
until the condition $||\hat \pi_{t}-\hat \pi_{t}P||_1  < \varepsilon$ is satisfied.
When comparing with the other methods in Figures~\ref{fig:harvard500} to \ref{fig:3blocks-MF-opt}, we plot for each method the \textit{error}, $||\hat \pi_{t}-\hat \pi ||_1$,  as a function of the number of normalized iterations. The latter quantity is defined as the cumulative number of graph links, used to update the algorithm state, divided by the number of links in the graph. The reference value $\hat{\pi}$, used to measure the error, is an approximation of the stationary distribution $\pi^*$
calculated with the PI method. The PI method is iterated until $||\hat \pi_{t}-\hat \pi_{t+1} ||_1$, where $\hat \pi_{t+1}=\hat \pi_{t}P$, is two orders of magnitude smaller than the minimum error presented in the corresponding figure, at which point we set $\hat \pi = \hat \pi_t$. The quality of this approximation is confirmed by the fact that the estimated error $||\hat \pi_{t}-\hat \pi ||_1$ for the GS method decreases linearly (in log scale), for the range of errors presented, as theoretically predicted for $||\hat \pi_{t}- \pi^* ||_1$.

\begin{table}[!t]
\caption{Summary of graph data.}
\label{tab:graphs}
{\fontsize{8pt}{8pt}\selectfont
\renewcommand{\arraystretch}{1.3}
\begin{center}
\resizebox{\columnwidth}{!}{\begin{tabular}{ l | l | l | l}
\hline Name& Nodes  & 	Edges & Description \\
\hline harvard500 & 500 & 2,636 &  Crawl from www.harvard.edu in 2003, SuiteSparse Matrix Collection\tablefootnote{http://sparse.tamu.edu/}\\
\hline stanford & 281,903 &	2,312,497 &	Web graph of Stanford.edu in 2002,
SNAP Network Collection\tablefootnote{http://snap.stanford.edu/ \label{SNAP}}\\
\hline uk-2007-05-100000 & 100,000& 3,050,615& Extract of the .uk domain,
WebGraph Laboratory Datasets\tablefootnote{http://law.di.unimi.it/ \label{WebGraph}}\\
\hline uk-2007-05-1000000 & 1,000,000 & 41,247,159 & Extract of  the .uk domain, WebGraph Laboratory Datasets\textsuperscript{\ref{WebGraph}}\\
\hline web-google & 916,428 & 5,105,039 & Web graph from Google Contest 2002, SNAP Network Collection \textsuperscript{\ref{SNAP}}\\
\hline two wheels & 12 & 21 & Synthetic graph, see Figure~\ref{fig:twowheels}\\
\hline SBM80 & 80 & 	330 & Synthetic Random SBM \\
\hline SBM800 & 800 & 33394 &	Synthetic Random SBM \\
\hline
\end{tabular}}\end{center}}\end{table}

First of all, we would like to note that RLGL\_PC and RLGL\_Theta nearly always outperform the other methods (see Figures \ref{fig:harvard500PR}-\ref{fig:3blocks-MF-opt}). Figure \ref{fig:uk-2007-05-1000000PR} presents one exception: GSo-PR\_Theta very slightly outperforms RLGL\_Theta on the uk-2007-05-1000000 web graph. However, on the smaller instance uk-2007-05-100000 (see Figure \ref{fig:uk-2007-05-100000PR}), RLGL\_Theta outperforms GSo-PR\_Theta for both values $\theta=1$ and $\theta=2$.

Let us now discuss in some more detail how RLGL compares to baseline general purpose algorithms: PI, GS and GMRES.

On all the tested graphs, as predicted in Subsection~\ref{ssec:geometric}, RLGL\_Rand is slightly
slower than PI but has an advantage of natural asynchronous distributed implementation.

We observe that RLGL\_RR  performs comparably to the GS method.
In principle one could try to optimize the ordering of updates in both GS and RLGL methods by preconditioning
or on the fly. However, in RLGL, as opposite to GS, it is possible to use the value of the current cash as an indication of the next node to update. In addition, the RLGL family of methods, which rely on pushing cash via out-going link, presents another advantage in comparison to GS: typically in the context of web crawling, it is much easier to obtain out-going links than in-coming links.

Finally, we compare RLGL\_Theta ($\theta=1$) to GMRES  with restart. Since the complexity of GMRES iterations increase with the number of iterations, it is not possible to compare RLGL and GMRES in terms of the number of iterations. Therefore, in Tables~\ref{tab:gmres} and \ref{tab:gmres2} we present the CPU runtime  of the two methods\footnote{For GMRES we have used the implementation from GNU Scientific Library (GSL), https://www.gnu.org/software/gsl/.}. In Table~\ref{tab:gmres} we solve the PageRank problem with the damping parameter $0.85$ and in Table \ref{tab:gmres2} we find the stationary distribution for the standard random walk on the largest connected component of the graphs. Clearly the latter problem is much more difficult than the former because of very slow mixing of the random walk on large graphs with clustered structure \cite{stewart1991numerical}. Because of this, on large graphs, all methods failed to converge in reasonable time for accuracy higher than $10^{-5}$.
GMRES requires setting of the dimension of the Krylov subspace until convergence or restart. A default size of Krylov subspace is $M=10$, which indeed in our experiments led to efficient convergence for the PageRank problem. We noticed that for larger graphs $M=5$ works even better. On the other hand larger values of the reset parameter result in slower convergence.    For the problem of computation of the stationary distribution, we noticed that $M=5$ often gives faster convergence but can also lead to unstable behavior. We see that RLGL\_Theta consistently outperformed GMRES on various large graphs on both problems. We emphasize that while using GMRES we need to work with dense rectangular matrices of increasing width, which is likely to significantly slow down GMRES at each subsequent iteration before restarting.

\begin{table}
[!t]
\caption{Comparison of the RLGL\_Theta ($\theta=1)$ and GMRES running times on PageRank (for a final residual at most $\varepsilon=10^{-11}$).}
\label{tab:gmres}
{\fontsize{8pt}{8pt}\selectfont
\renewcommand{\arraystretch}{1.3}
\begin{center}
\resizebox{\columnwidth}{!}{\begin{tabular}{ l | l | l || c || c | c || c | c || c | c  }
\hline Name & Nodes  & 	Edges &   RLGL\_Theta & \multicolumn{6}{c}{GMRES}\\
\hline & & &$\theta=1$ & \multicolumn{2}{c|}{$M=10$}& \multicolumn{2}{c|}{$M=5$}& \multicolumn{2}{c}{$M=20$}\\
\hline & & &sec. & sec. & restarts & sec. & restarts & sec. & restarts\\
\hline  harvard500  & 500 & 2,636 &  \textbf{0.017}  &  0.023  &  5  &  0.024  &  10  &  0.026  &  3 \\
\hline  stanford  & 281,903 &	2,312,497 &  \textbf{1.4}  &  6.5  &  9  &  4.7  &  20  &  18  &  5 \\
\hline  uk-2007-05-100000  & 100,000& 3,050,615 &  \textbf{0.96}  &  1.8  &  5  &  1.5  &  10  &  3.4  &  3 \\
\hline  uk-2007-05-1000000  & 1,000,000 & 41,247,159 &  \textbf{11}  &  24  &  6  &  19  &  12  &  46  &  3 \\
\hline  web-google  & 916,428 & 5,105,039 &  \textbf{6.8}  &  30  &  10  &  21  &  21  &  68  &  5 \\
\hline  SBM80  & 80 & 	330 &  \textbf{0.0099}  &  0.013  &  5  &  0.021  &  10  &  0.013  &  2 \\
\hline  SBM800  & 800 & 33,394 &  0.014  &  0.017  &  2  &  0.019  &  5  &  \textbf{0.013}  &  1 \\
\hline
\end{tabular}}\end{center}}\end{table}

\begin{table}
[!t]
\caption{GMRES running times for stationary distributions (for a final residual at most $\varepsilon=10^{-5}$).}
\label{tab:gmres2}
{\fontsize{8pt}{8pt}\selectfont
\renewcommand{\arraystretch}{1.3}
\begin{center}
\resizebox{\columnwidth}{!}{\begin{tabular}{ l | l | l || c || c | c || c | c || c | c  }
\hline Name & Nodes  & 	Edges &   RLGL\_Theta & \multicolumn{6}{c}{GMRES}\\
\hline & & & $\theta=1$ & \multicolumn{2}{c|}{$M=10$}& \multicolumn{2}{c|}{$M=5$}& \multicolumn{2}{c}{$M=20$}\\
\hline & & & sec. & sec. & restarts & sec. & restarts & sec. & restarts\\
\hline  harvard500 LCC  & 335 & 1963 &  \textbf{0.014}  &  0.023  &  5  &  0.019  &  12  &  0.019  &  2 \\
\hline  stanford LCC  & 150532 & 1576314 &  \textbf{225}  &  359  &  819  &  607  &  4244  &  683  &  312 \\
\hline  uk-2007-05@100000 LCC  & 53856 & 1683102 &  \textbf{0.19}  &  0.43  &  3  &  0.29  &  5  &  0.68  &  1 \\
\hline  uk-2007-05@1000000 LCC  & 480913 & 22057738 &  \textbf{1097}  &  2653  &  1162  &  -  &  -  &  -  &  - \\
\hline  web-google LCC  & 434818 & 3419124 &  \textbf{189}  &  741  &  498  &  553  &  1303  &  615  &  96 \\

\hline
\end{tabular}}\end{center}}\end{table}

The chosen synthetic graphs (SBM and Two Wheels graph) have very strong clustering structure. We have chosen such synthetic graphs in the  purpose of  seeing the effect of the clustered structure on the performance of RLGL. We were glad to observe that for PageRank computation on the synthetic graphs RLGL significantly outperformed GSo\_PR (see Figures \ref{fig:A80PR}, \ref{fig:A800PR}, \ref{fig:twowheelsPR}).
At the same time, RLGL outperforms or performs equally well compared with GSo-PR on the real web graphs.
Moreover, the following three important observations can be made: firstly, the GSo-PR method is applicable only to the PageRank problem, whereas RLGL is applicable to Markov chains with general structure. Secondly, as already discussed
in Section~\ref{ssec:math_RLGL}, the GSo-PR method can be viewed as a particular case of RLGL.
{Thus, it should come with no surprise that a more flexible way of green-light scheduling
in RLGL leads to improved performance.}
Thirdly, the rate of cash depletion in the GSo-PR method is limited by ($1-\alpha$)-fraction of the available cash. Thus, RLGL potentially achieves a faster cash depletion rate, which is confirmed by many of the experiments.

Whenever it can be applied, RLGL\_MaxC generally performs comparably to RLGL\_PC and RLGL\_Theta. (The synthetic two-wheels graph, see Figure \ref{fig:twowheelsM}, makes a noticeable exception where RLGL\_MaxC performs extremely well.)  The clear advantages
of the two latter methods are their scalability and distributivity. When implementing RLGL\_PC and RLGL\_Theta in a distributed way, one should be aware about the decrease of the total absolute value of the cash. To address this issue, one can broadcast from time to time the total amount of cash or each node can estimate its rate of decline of cash (we observe that the exponential convergence phase is achieved very rapidly and this helps estimating the total amount of cash); then each node should renormalize its wakeup probablity or its threshold.

RLGL\_Greedy (one-step optimization) shows very unstable performance. For instance on Figures \ref{fig:harvard500PR}, \ref{fig:harvard500M}, \ref{fig:A80M}, \ref{fig:A800M} RLGL\_Greedy  has not performed well at all, whereas on Figure \ref{fig:twowheelsM} it converges extremely fast. Furthermore, clearly RLGL\_Greedy requires a much more effort in comparison with the other considered heuristics.
The above makes a strong case for searching for effective heuristics or for approximating the dynamic programming approach. Clearly, one-step optimization is not sufficient.
On the above mentioned examples, RLGL\_PC and RLGL\_Theta both continue to perform well.

In Figure~\ref{fig:3blocks-MF-opt} we apply various methods on the mean-field model of the SBM80 graph.
In particular, we apply RLGL with the optimal green-light scheduling by blocks as in Subsection \ref{ssec:DP} to attain $\varepsilon=10^{-14}$. Let us refer to this optimal scheduling as RLGL\_Opt. It is interesting to observe that, in order for RLGL\_Opt to reach the optimal turnpike, it needs to sacrifice significantly the convergence speed during the initial phase. As a matter of fact, during the initial phase the convergence rates of all the methods are higher than the  convergence rate of RLGL\_Opt. However once RLGL\_Opt reaches the optimal turnpike, its convergence rate becomes really impressive. We emphasize that the optimal green-light scheduling depends on the aimed precision.

After comparing the proposed heuristics, we recommend RLGL\_PC and RLGL\_Theta
as they both are easy to implement, scalable to very large graphs, typically outperform other state of the art methods, and can be distributed.
At the same time, our general observation is that myopic heuristics often have unstable performance. In fact,
we note that it is quite sufficient and preferable to give green light to nodes with a {\it large} amount of cash, not necessarily to the nodes with the {\it largest} amount of cash.

\section{Conclusions and future research}
\label{sec:conc}

We have proposed a versatile Red Light Green Light (RLGL) method for solution of large Markov chains.
Our inspiration had come from OPIC and Gauss-Southwell methods. However, there are crucial differences
between RLGL and the two above mentioned methods. Both OPIC and Gauss-Southwell are based on the online
distribution of positive cash. In the RLGL method we allow to distribute both {\em negative} as well as positive cash. This leads to at least exponential convergence rate as opposite to $O(1/t)$ rate in OPIC.
The Gauss-Southwell methods can be regarded as particular cases of our, more general, approach specified
for positive cash distribution in the context of PageRank. We have demonstrated that a
cash-dependent, green-light scheduling can result in significant computational gains.

In our opinion, this work opens a number of very interesting research directions. We are convinced that even more effective heuristics for green-light scheduling could be developed, than those proposed in this work. Elaboration of such heuristics  and their theoretical justification is an obvious future research direction. Next, we have observed that RLGL performs well on graphs
with clustered structure, that typically constitute a challenging setting for the standard methods
such as power iterations. It will be very interesting to further investigate  this phenomenon. Distributed implementations of the RLGL method represent another opportunity for further research that has high potential for practical applications. This topic clearly deserves a separate thorough study beyond our preliminary observations in  Section~\ref{sec:num}. Finally, we have observed that the optimisation of the green-light scheduling
can give an impressive computational gain but at the same time suffers from the curse of dimensionality.
One very interesting future research direction would be to look for approximate dynamic programming
approaches that could help to deal with this challenge.

\section*{Declarations}

\noindent {\bf Archiving:} This is the author version of the paper published
by Springer in {\it Journal of Scientific Computing}, v.93, 18 (2022).
{\tt https://doi.org/10.1007/s10915-022-01976-8}

\noindent {\bf Funding:} This work was partially supported by a grant from Qwant search engine company and
EU COST Action COSTNET CA15109. The work of NL is partially supported by the Netherlands Organisation for Scientific Research (NWO) through the Gravitation {\sc NETWORKS} grant no. 024.002.003.

\noindent {\bf Data availability:} The datasets analysed during the current study are available in the
following public repositories:
\begin{enumerate}
\item {\it SuiteSparse Matrix Collection}: {\tt http://sparse.tamu.edu/}
\item {\it SNAP Network Collection}: {\tt http://snap.stanford.edu/}
\item {\it WebGraph Laboratory Datasets}: {\tt http://law.di.unimi.it/},
see also Ref. \cite{WebGraph}.
\end{enumerate}

\bibliographystyle{plain}
\bibliography{RLGLarxiv}


%
%

\begin{figure}[h!]
\begin{center}
\begin{tikzpicture}
    \tikzstyle{every node}=[draw,circle,fill=black,minimum size=4pt,
                            inner sep=0pt]

    \draw (0,0) node (1) {}
        -- ++(0:2.0cm) node (2) {}
        -- ++(300:2.0cm) node (3) {}
        -- ++(240:2.0cm) node (4) {}
        -- ++(180:2.0cm) node (5) {}
        -- ++(120:2.0cm) node (6) {}
        -- (1); 

    \draw (1)
        -- ++(300:2.0cm) node (7) {};

    \draw (3)
        -- ++(0:2.0cm) node (8) {}
        -- ++(45:2.0cm) node (9) {}
        -- ++(-45:2.0cm) node (10) {}
        -- ++(-135:2.0cm) node (11) {}
        -- ++(90:1.41cm) node (12) {};

    \draw (2) -- (7);
    \draw (3) -- (7);
    \draw (4) -- (7);
    \draw (5) -- (7);
    \draw (6) -- (7);
    \draw (8) -- (12);
    \draw (9) -- (12);
    \draw (10) -- (12);
    \draw (8) -- (11);

\end{tikzpicture}
\end{center}
\caption{The two-wheels graph topology.}
\label{fig:twowheels}
\end{figure}
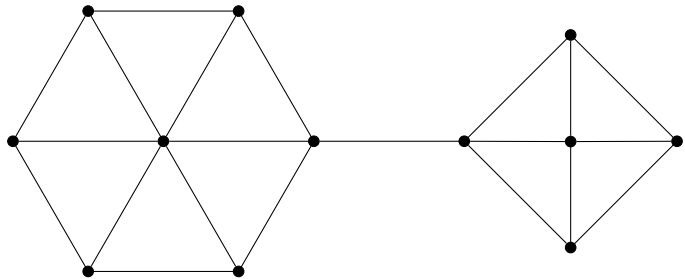

\begin{figure}[h!]
     \centering
     \begin{subfigure}[b]{0.49\textwidth}
       \centering
	   \includegraphics[width=0.99\textwidth]{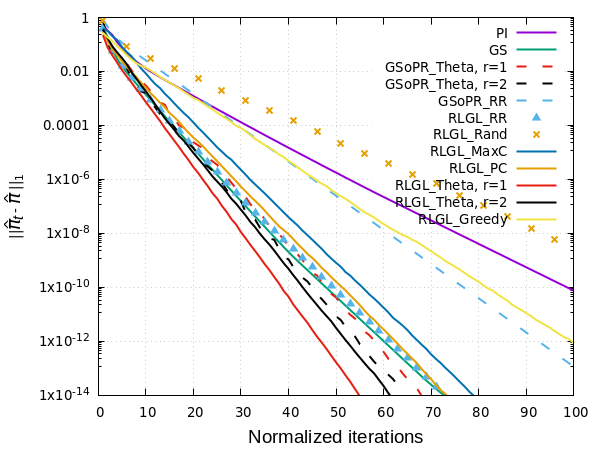}
       \caption{PageRank computation on the largest strongly connected component.}\label{fig:harvard500PR}
     \end{subfigure}
     \hfill
     \begin{subfigure}[b]{0.49\textwidth}
       \centering
	   \includegraphics[width=0.99\textwidth]{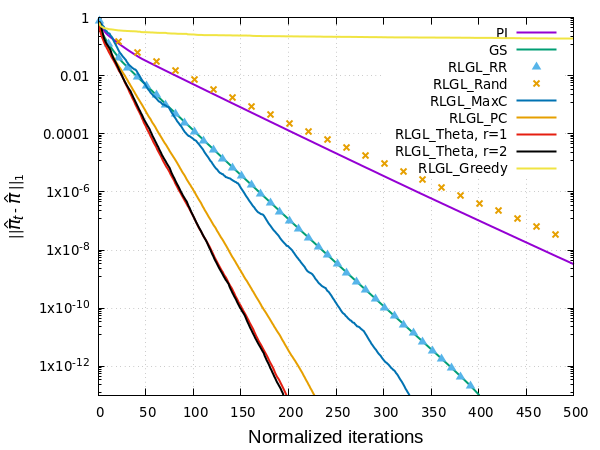}
       \caption{Stationary distribution computation on the largest strongly connected component.}\label{fig:harvard500M}
     \end{subfigure}
     \caption{Harvard500 graph.}\label{fig:harvard500}
\end{figure}


\begin{figure}[h!]
\centering
\includegraphics[width=7cm]{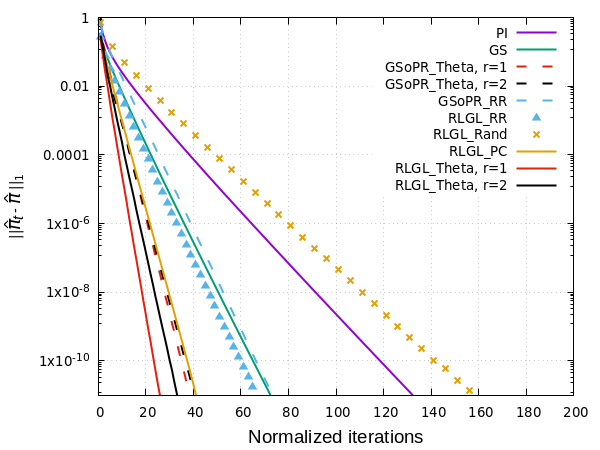}
\caption{PageRank computation on stanford graph.}
\centering
\label{fig:StanfordPR}
\end{figure}

\begin{figure}[h!]
     \centering
     \begin{subfigure}[b]{0.49\textwidth}
       \centering
	   \includegraphics[width=0.98\textwidth]{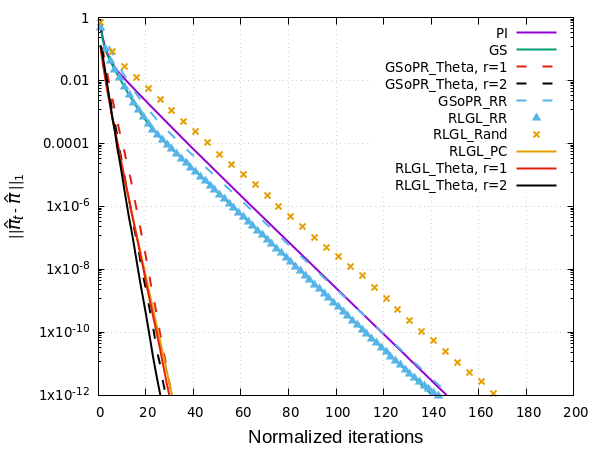}
       \caption{PageRank computation on uk-2007-05-100000.}\label{fig:uk-2007-05-100000PR}
     \end{subfigure}
     \hfill
     \begin{subfigure}[b]{0.49\textwidth}
       \centering
	   \includegraphics[width=0.98\textwidth]{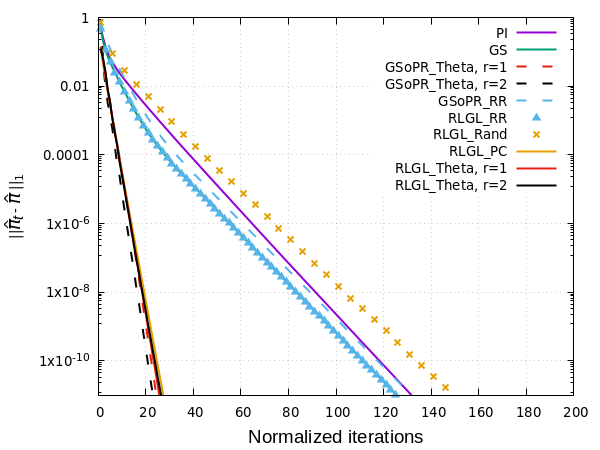}
       \caption{PageRank computation on uk-2007-05-1000000.}\label{fig:uk-2007-05-1000000PR}
     \end{subfigure}
     \caption{uk-2007 graphs.}\label{fig:uk-2007}
\end{figure}


\begin{figure}[h!]
\centering
\includegraphics[width=7cm]{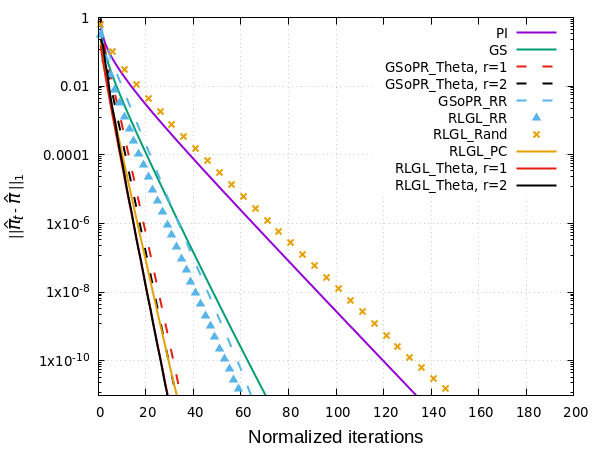}
\caption{PageRank computation on web-google graph.}
\centering
\label{fig:web-GooglePR}
\end{figure}

\begin{figure}[h!]
     \centering
     \begin{subfigure}[b]{0.49\textwidth}
       \centering
	   \includegraphics[width=0.98\textwidth]{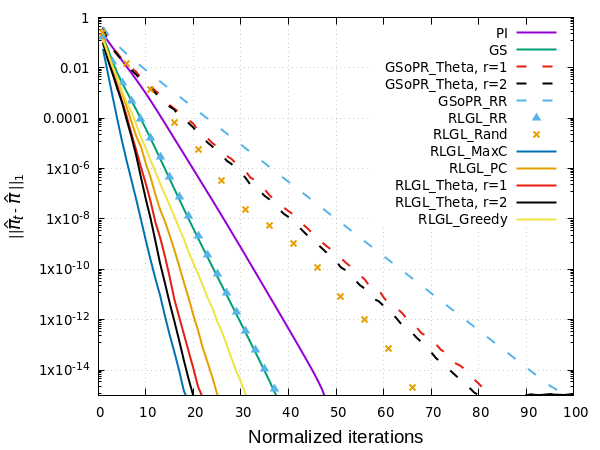}
       \caption{PageRank computation.}\label{fig:A80PR}
     \end{subfigure}
     \hfill
     \begin{subfigure}[b]{0.49\textwidth}
       \centering
	   \includegraphics[width=0.98\textwidth]{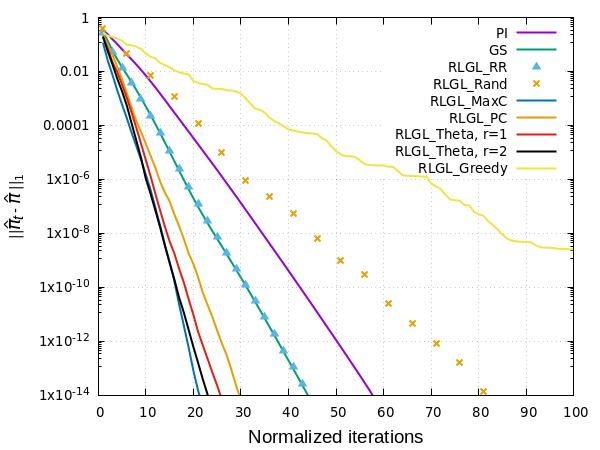}
       \caption{Stationary distribution computation.}\label{fig:A80M}
     \end{subfigure}
     \caption{SBM80 graph.}\label{fig:A80}
\end{figure}


\begin{figure}[h!]
     \centering
     \begin{subfigure}[b]{0.49\textwidth}
       \centering
	   \includegraphics[width=0.98\textwidth]{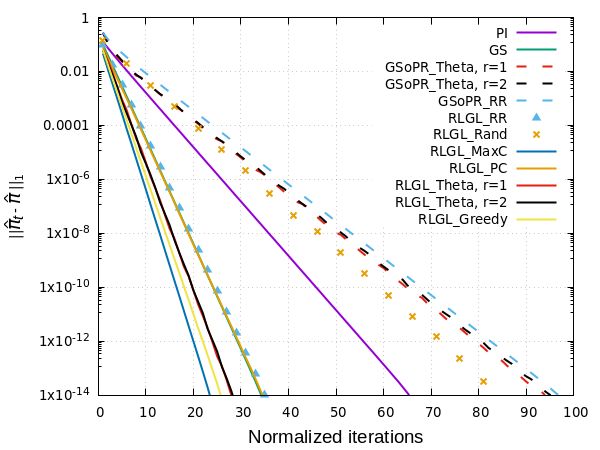}
       \caption{PageRank computation.}\label{fig:A800PR}
     \end{subfigure}
     \hfill
     \begin{subfigure}[b]{0.49\textwidth}
       \centering
	   \includegraphics[width=0.98\textwidth]{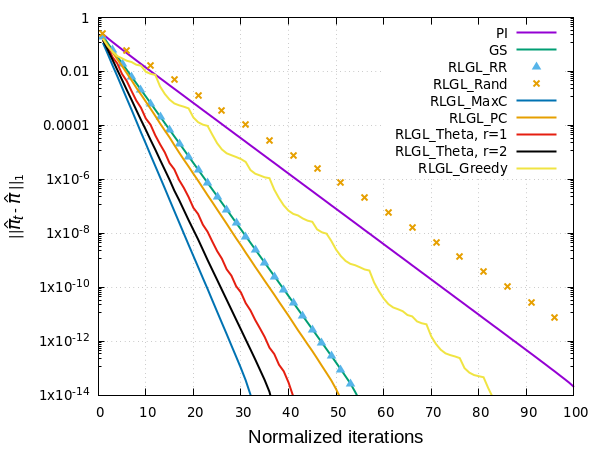}
       \caption{Stationary distribution computation.}\label{fig:A800M}
     \end{subfigure}
     \caption{SBM800 graph.}\label{fig:A800}
\end{figure}


\begin{figure}[h!]
     \centering
     \begin{subfigure}[b]{0.49\textwidth}
       \centering
	   \includegraphics[width=0.99\textwidth]{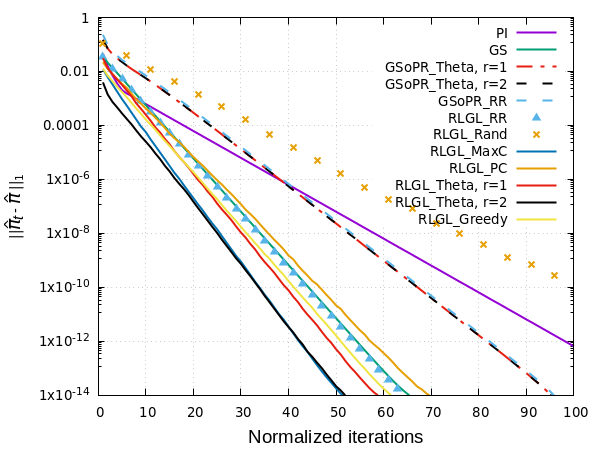}
       \caption{PageRank computation.}\label{fig:twowheelsPR}
     \end{subfigure}
     \hfill
     \begin{subfigure}[b]{0.49\textwidth}
       \centering
	   \includegraphics[width=0.99\textwidth]{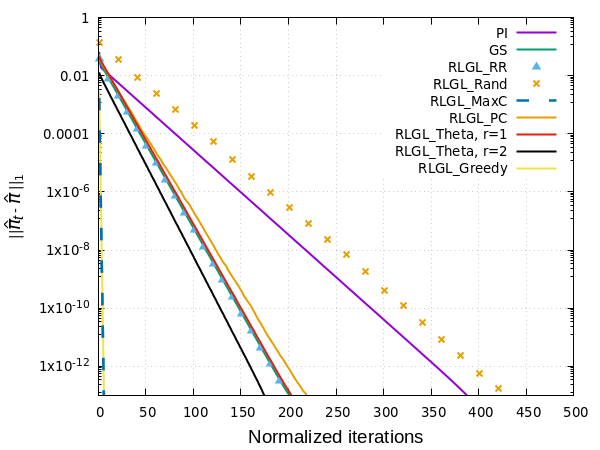}
       \caption{Stationary distribution computation.}\label{fig:twowheelsM}
     \end{subfigure}
     \caption{The two-wheels graph.}\label{fig:twowheelscomp}
\end{figure}


\begin{figure}[h!]
\centering
\includegraphics[width=8cm]{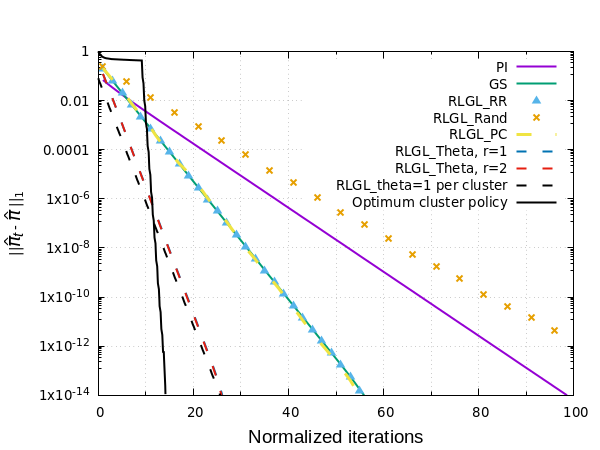}
\caption{Stationary distribution computation on the three block mean-field SBM.}
\centering
\label{fig:3blocks-MF-opt}
\end{figure}


\end{document}